\def\ud{{\rm\,d}}
\def\R{\mathbb{R}}
\def\Sph{\mathbb{S}}
\def\CC{\mathcal{C}}
\def\DD{\mathcal{D}}
\def\GG{\mathcal{G}}
\def\LL{\mathcal{L}}
\def\OO{\mathcal{O}}
\def\QQ{\mathcal{Q}}
\newcommand{\mrm}[1]{\mathrm{#1}}
\newcommand{\bs}[1]{\boldsymbol{#1}}
\newcommand{\Stwo}{{\mathbb{S}^2}}
\def\xb{\bs{x}}
\def\yb{\bs{y}}
\def\Vb{\bs{V}}
\def\pr(#1){\left({#1}\right)}
\def\br[#1]{\left[{#1}\right]}
\def\abs#1{\left|{#1}\right|}
\def\norm#1{\left\|{#1}\right\|}
\def\conj#1{\overline{#1}}
\def\pFq#1#2{{\,}_{#1}F_{#2}}
\def\ii{{\rm i}}
\DeclareRobustCommand{\rchi}{{\mathpalette\irchi\relax}}
\newcommand{\irchi}[2]{\raisebox{\depth}{$#1\chi$}}
\begin{document}

\markboth{H.~Montanelli et al.}{Nonlocal vector calculus on the sphere}

%
\catchline{}{}{}{}{}
%
\title{Nonlocal vector calculus on the sphere}

\author{Hadrien Montanelli\footnote{Corresponding author}}
\address{Inria, Unit\'{e} de Math\'{e}matiques Appliqu\'{e}es, ENSTA, Institut Polytechnique de Paris, 91120 Palaiseau, France\\
hadrien.montanelli@inria.fr}

\author{Richard Mika\"el Slevinsky}
\address{Department of Mathematics, University of Manitoba, Winnipeg, R3T 5V6, Canada\\
richard.slevinsky@umanitoba.ca}

\author{Qiang Du}
\address{Department of Applied Physics and Applied Mathematics, and Data Science Institute, Columbia University, New York, NY 10027, USA\\
qd2125@columbia.edu}


\maketitle

\begin{history}
\received{(Day Month Year)}
\revised{(Day Month Year)}
\comby{(xxxxxxxxxx)}
\end{history}

\begin{abstract}
We introduce a nonlocal vector calculus on the unit two-sphere using weakly singular integral operators. Within this framework, the operators are diagonalizable in terms of scalar and vector spherical harmonics, a property that facilitates the proof of a nonlocal Stokes theorem. This constitutes the first instance of such a theorem on a curved surface. Furthermore, our analysis demonstrates the strong convergence of these nonlocal operators to the classical differential operators of vector calculus as the interaction range tends to zero.
\end{abstract}

\keywords{nonlocal operator, singular integral operator, nonlocal vector calculus, nonlocal Stokes theorem, vector spherical harmonics}

\ccode{AMS Subject Classification: 26A33, 33C55, 34A08, 34B10, 35R11, 42B37, 45A05, 58A05}

\section{Introduction}

Nonlocal models are prevalent in applied fields such as materials science, thermodynamics, fluid dynamics, fracture mechanics, biology, and image analysis \cite{bates1999, bobaru2010, du2011, du2017a, du2019, gilboa2008, kao2010, silling2000}. In Ref.~\citen{du2013a}, Du, Gunzburger, Lehoucq, and Zhou developed a vector calculus for these operators in $\mathbb{R}^n$, termed a \textit{nonlocal vector calculus}, which parallels the standard vector calculus for differential operators. This paper goes beyond the concepts introduced in Ref.~\citen{du2013a} by developing a novel framework for nonlocal vector calculus on non-Euclidean domains. In particular, it establishes a nonlocal Stokes theorem on the unit two-sphere $\Stwo\subset\mathbb{R}^3$, which requires a careful redefinition of operators and integral identities in a nonlocal, curved setting. 

In a previous work \cite{montanelli2018b}, the authors introduced nonlocal Laplace--Beltrami operators on $\Stwo$ of the form
\begin{align}
\mathcal{L}_S^\delta\{u\}(\xb) = \int_{\Stwo}\left[u(\yb) - u(\xb)\right]\rho_\delta(\vert\xb - \yb\vert)\ud\Omega(\yb),
\label{eq:nonloc_diff}
\end{align}

\noindent where $\vert\xb - \yb\vert$ is the Euclidean distance in $\mathbb{R}^3$ between $\xb$ and $\yb$ on $\Stwo$, $\mathrm{d}\Omega(\yb)$ denotes the standard measure on $\Stwo$, and $\rho_\delta$ is a suitably defined nonlocal kernel with horizon $0 < \delta \leq 2$, determining the range of interactions.\footnote{An horizon $\delta = 0$ corresponds to a local operator, while an horizon $\delta = 2$---the maximum Euclidean distance between two points on $\Stwo$---corresponds to a global operator representing all-to-all coupling.} In this paper, we define nonlocal surface divergence, gradient, and curl operators and their adjoints on $\Stwo$ (Sec.~\ref{sec:nonloc_op}). Building on these definitions, we develop a nonlocal vector calculus using spherical harmonics, demonstrate that our nonlocal operators are diagonal when these are used as bases, and prove a nonlocal Stokes theorem (Sec.~\ref{sec:nonloc_stokes}). This incentivizes the adoption of the nonlocal operators defined by integrals studied here over other choices.\footnote{It is not feasible to develop a vector calculus for fractional operators via their spectral definition because the domain and the range of operators, such as the surface gradient, require different bases. For a review of the fractional Laplace operator in $\mathbb{R}^n$, see Ref.~\citen{kwasnicki2017}.} Finally, we prove that our nonlocal operators converge strongly to their local analogues as the horizon $\delta \rightarrow 0$ (Sec.~\ref{sec:nonloc_anal}).

\section{Nonlocal operators and their adjoints}\label{sec:nonloc_op}

Throughout the paper, we will use regular font to denote scalars and bold font to denote vectors. We identify a point $\xb\in\Stwo$ with the unit normal vector to $\Stwo$ at $\xb$. Therefore, points $\xb\in\Stwo$ correspond to vectors orthogonal to $\Stwo$. The Euclidean distance between $\xb,\yb\in\Stwo$ is given by
\begin{align}
\vert\xb-\yb\vert = \sqrt{2(1-\xb\cdot\yb)},
\label{eq:distance}
\end{align}
where $\cdot$ denotes the standard Euclidean inner product in $\R^3$. When necessary, we will use colatitude $\theta\in[0,\pi]$ and longitude $\varphi\in[0,2\pi)$ to place a point $\xb$ on the sphere, and $\ud\Omega = \sin\theta\ud\theta\ud\varphi$ for the Lebesgue measure on $\Stwo$.

The spherical basis, $\bs{e_\theta}$, $\bs{e_\varphi}$ and $\xb$, allows for a canonical decomposition of vector fields $\Vb:\Stwo\to\R^3$ at any $\xb\in\Stwo$,
\begin{align}
\Vb = V^\theta(\theta,\varphi)\bs{e_\theta} + V^\varphi(\theta,\varphi)\bs{e_\varphi} + V^x(\theta,\varphi)\xb,
\label{eq:spherical}
\end{align}
for some scalar fields $V^\theta,V^\varphi,V^x:\Stwo\to\R$. The first two components lie in the tangent plane to $\Stwo$ at $\xb$; these are therefore orthogonal to the third component, which is along the normal $\xb$. Another canonical decomposition of vector fields is the \textit{spheroidal-toroidal decomposition}, which reads
\begin{align}
\Vb = \nabla_S V^s(\theta,\varphi) + \bs{x}\times\nabla_S V^t(\theta,\varphi) + V^x(\theta,\varphi)\xb,
\label{eq:spheroidal}
\end{align}
where $\nabla_S$ and $\bs{x}\times\nabla_S$ are the surface gradient and vector surface curl of the scalar fields $V^s,V^t:\Stwo\to\R$, whose definitions are recalled below, together with the surface divergence, scalar surface curl, and curl of a vector field. We denote the \textit{tangent bundle} by $T\Stwo$, the set of tangent planes to $\Stwo$. For details, we refer to Refs.~\citen{atkinson2012} and \citen{nedelec2001}.

\begin{definition}[Local operators]\label{def:loc_operators}
Let $u:\Stwo\rightarrow\R$ and $\Vb:\Stwo\rightarrow\R^3$ be differentiable at any point $\xb\in\Stwo$ with the corresponding $\theta$ and $\varphi$. The surface divergence $\DD_S^0\{\Vb\}:\Stwo\rightarrow\R$ is the scalar field defined by
\begin{align}
\DD_S^0\{\Vb\} = \nabla_S\cdot\Vb = \csc\theta\partial_\theta(\sin\theta V^\theta) + \csc\theta\partial_\varphi V^\varphi.
\label{eq:surf_div}
\end{align}
The scalar surface curl $\CC_S^0\{\Vb\}:\Stwo\rightarrow\R$ is the scalar field defined by
\begin{align} 
\CC_S^0\{\Vb\} = \nabla_S\cdot(\xb\times\Vb) = -\csc\theta\partial_\theta(\sin\theta V^\varphi) + \csc\theta\partial_\varphi V^\theta.
\label{eq:scal_surf_curl}
\end{align}
The surface gradient $\bs{\GG}_S^0\{u\}:\Stwo\rightarrow T\Stwo$ is the tangential vector field defined by
\begin{align}
\bs{\GG}_S^0\{u\} = \nabla_S u = \partial_\theta u\bs{e_\theta} + \csc\theta\partial_\varphi u\bs{e_\varphi}.
\label{eq:surf_grad}
\end{align}
The vector surface curl $\bs{\CC}_S^0\{u\}:\Stwo\rightarrow T\Stwo$ is the tangential vector field defined by
\begin{align} 
\bs{\CC}_S^0\{u\} = \xb\times\nabla_S u = -\csc\theta\partial_\varphi u\bs{e_\theta} + \partial_\theta u\bs{e_\varphi}.
\label{eq:vect_surf_curl}
\end{align}
Finally, the curl $\bs{\CC}^0\{\Vb\}:\Stwo\rightarrow\R^3$ is the vector field defined by
\begin{align}
\bs{\CC}^0\{\Vb\} = \nabla\times \Vb = & \; [\csc\theta\partial_\varphi V^x - V^\varphi]\bs{e_\theta} + [V^\theta - \partial_\theta V^x]\bs{e_\varphi}\nonumber\\
& \; + [\csc\theta\partial_\theta(\sin\theta V^\varphi) - \csc\theta\partial_\varphi V^\theta]\xb.
\label{eq:curl}
\end{align}
\end{definition}

The operators of Definition \ref{def:loc_operators} satisfy the following properties \cite[Thm.~2.5.19]{nedelec2001}.

\begin{lemma}[Local vector calculus]\label{lem:loc_vect_calc}
For any (twice) differentiable $u:\Sph^2\to\R$ and differentiable $\bs{V}:\Sph^2\to\R^3$,
\begin{align*}
& \DD_S^0\{\bs{\GG}_S^0\{u\}\} = \LL_S^0\{u\}, && \text{i.e.}, && \nabla_S\cdot(\nabla_S u) = \Delta_S u,\\
& \DD_S^0\{\bs{\CC}_S^0\{u\}\} = 0, && \text{i.e.}, && \nabla_S\cdot(\xb\times\nabla_S u) = 0,\\
& \DD_S^0\{u\xb\} = 0, && \text{i.e.}, && \nabla_S\cdot(u\xb) = 0, \\
& \CC_S^0\{\bs{\GG}_S^0\{u\}\} = 0, && \text{i.e.}, && \nabla_S\cdot(\xb\times(\nabla_S u)) = 0,\\
& \CC_S^0\{\bs{\CC}_S^0\{u\}\} = -\LL_S^0\{u\}, && \text{i.e.}, && \nabla_S\cdot(\xb\times(\xb\times\nabla_S u)) = -\Delta_S u,\\
& \CC_S^0\{u\xb\} = 0, && \text{i.e.}, && \nabla_S\cdot(\xb\times(u\xb)) = 0,
\end{align*}
where $\mathcal{L}_S^0=\Delta_S$ is the Laplace--Betlrami operator defined by
\begin{align*}
\mathcal{L}_S^0\{u\} = \Delta_S u = u_{\theta\theta} + \csc\theta\cos\theta u_\theta + \csc^2\theta u_{\varphi\varphi}.
\end{align*}
Moreover,
\begin{align*}
\bs{\CC}^0\{\Vb\} = \xb\times\Vb - \bs{\CC}_S^0\{\xb\cdot\Vb\} - \CC_S^0\{\Vb\}\xb,
\end{align*}
i.e.,
\begin{align}
\nabla\times\Vb &= \xb\times\Vb - \xb\times\nabla_S(\xb\cdot\Vb) - [\nabla_S\cdot(\xb\times\Vb)]\xb.\label{eq:cross}
\end{align}
and, in particular,
\begin{align*}
& \bs{\CC}^0\{\bs{\GG}_S^0\{u\}\} = \bs{\CC}_S^0\{u\}, && \text{i.e.}, && \nabla\times(\nabla_S u) = \xb\times\nabla_S u,\\
& \bs{\CC}^0\{\bs{\CC}_S^0\{u\}\} = -\bs{\GG}_S^0\{u\} + \LL_S^0\{u\}\xb, && \text{i.e.}, && \nabla\times(\xb\times\nabla_S u) = -\nabla_S u + \Delta_S u\xb,\\
& \bs{\CC}^0\{u\xb\} = -\bs{\CC}_S^0\{u\}, && \text{i.e.}, && \nabla\times(u\xb) = -\xb\times\nabla_S u.
\end{align*}
Finally,
\begin{align*}
& \DD_S^0\{u\bs{V}\} = \bs{\GG}_S^0\{u\}\cdot\bs{V} + u\DD_S^0\{\bs{V}\},\\
& \bs{\CC}^0\{u\Vb\} = \bs{\GG}_S^0\{u\}\times\bs{V} + u\bs{\CC}^0\{\Vb\},
\end{align*}
i.e.,
\begin{align}
& \nabla_S\cdot(u\bs{V}) = \nabla_S u\cdot\bs{V} + u\nabla_S\cdot\bs{V},\label{eq:div-prod}\\
& \nabla\times(u\bs{V}) = \nabla_S u\times\bs{V} + u\nabla\times\bs{V}\label{eq:cross-prod}.
\end{align}
\end{lemma}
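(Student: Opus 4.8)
The plan is to reduce every assertion to a direct computation in the spherical coordinates $(\theta,\varphi)$ using the explicit formulas of Definition~\ref{def:loc_operators}, together with the equality of mixed second partials. I would first dispatch the six scalar identities. For $\DD_S^0\{\bs{\GG}_S^0\{u\}\} = \LL_S^0\{u\}$, note that $\bs{\GG}_S^0\{u\}$ has tangential components $V^\theta = \partial_\theta u$ and $V^\varphi = \csc\theta\,\partial_\varphi u$; substituting these into \eqref{eq:surf_div} and expanding $\csc\theta\,\partial_\theta(\sin\theta\,\partial_\theta u)$ recovers exactly the three terms of $\LL_S^0$. The identities $\DD_S^0\{\bs{\CC}_S^0\{u\}\} = 0$ and $\CC_S^0\{\bs{\GG}_S^0\{u\}\} = 0$ come out of \eqref{eq:surf_div}--\eqref{eq:scal_surf_curl} in the same way, the surviving terms being $\mp\csc\theta\,\partial_{\theta\varphi}u \pm \csc\theta\,\partial_{\varphi\theta}u$, which cancel; and $\CC_S^0\{\bs{\CC}_S^0\{u\}\} = -\LL_S^0\{u\}$ is the negative of the first computation. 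Since $u\xb$ has vanishing tangential components, \eqref{eq:surf_div} and \eqref{eq:scal_surf_curl} give $\DD_S^0\{u\xb\} = \CC_S^0\{u\xb\} = 0$ at once.

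Next I would prove the curl decomposition \eqref{eq:cross}, which is the structural heart of the lemma. Decomposing $\Vb = V^\theta\bs{e_\theta} + V^\varphi\bs{e_\varphi} + V^x\xb$ and using the right-handed frame relations $\xb\times\bs{e_\theta} = \bs{e_\varphi}$, $\xb\times\bs{e_\varphi} = -\bs{e_\theta}$, $\xb\times\xb = 0$, one gets $\xb\times\Vb = -V^\varphi\bs{e_\theta} + V^\theta\bs{e_\varphi}$; since $\xb\cdot\Vb = V^x$, formula \eqref{eq:vect_surf_curl} gives $\bs{\CC}_S^0\{\xb\cdot\Vb\} = -\csc\theta\,\partial_\varphi V^x\,\bs{e_\theta} + \partial_\theta V^x\,\bs{e_\varphi}$, and $\CC_S^0\{\Vb\}$ is read off \eqref{eq:scal_surf_curl}. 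Subtracting, the $\bs{e_\theta}$-, $\bs{e_\varphi}$- and $\xb$-components of $\xb\times\Vb - \bs{\CC}_S^0\{\xb\cdot\Vb\} - \CC_S^0\{\Vb\}\xb$ are precisely the three bracketed expressions of \eqref{eq:curl}, which is \eqref{eq:cross}. The three particular cases then follow by specialization: for $\Vb = \bs{\GG}_S^0\{u\}$ one has $\xb\cdot\Vb = 0$ and $\CC_S^0\{\Vb\} = 0$, so \eqref{eq:cross} collapses to $\bs{\CC}_S^0\{u\}$; for $\Vb = \bs{\CC}_S^0\{u\} = \xb\times\nabla_S u$ one uses $\xb\cdot\Vb = 0$, $\CC_S^0\{\Vb\} = -\LL_S^0\{u\}$ and the identity $\xb\times(\xb\times\nabla_S u) = -\nabla_S u$ (valid because $\nabla_S u$ is tangential); for $\Vb = u\xb$ one uses $\xb\times\Vb = 0$ and $\CC_S^0\{\Vb\} = 0$.

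For the two product rules, \eqref{eq:div-prod} follows by applying the Leibniz rule to \eqref{eq:surf_div} with the components $uV^\theta, uV^\varphi$ of $u\Vb$: the terms where a derivative hits a component of $\Vb$ reassemble into $u\,\DD_S^0\{\Vb\}$, and the rest into $\partial_\theta u\,V^\theta + \csc\theta\,\partial_\varphi u\,V^\varphi = \bs{\GG}_S^0\{u\}\cdot\Vb$. For \eqref{eq:cross-prod}, I would expand $\bs{\CC}^0\{u\Vb\}$ through \eqref{eq:cross}, pull $u$ out of $\xb\times(u\Vb)$, and apply \eqref{eq:div-prod} and the elementary surface-gradient Leibniz rule $\nabla_S(uf) = u\nabla_S f + f\nabla_S u$ to the other two terms; the terms proportional to $u$ rebuild $u\,\bs{\CC}^0\{\Vb\}$ via \eqref{eq:cross}, and the leftover $-(\xb\cdot\Vb)(\xb\times\nabla_S u) - (\nabla_S u\cdot(\xb\times\Vb))\xb$ is shown to equal $\nabla_S u\times\Vb$ by writing $\Vb = \Vb_T + (\xb\cdot\Vb)\xb$ with $\Vb_T$ tangential and invoking the scalar triple product (so the tangential part contributes the normal term and the normal part the other).

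I expect the only genuinely delicate point to be the bookkeeping in the curl decomposition \eqref{eq:cross}: keeping the spherical-frame cross products, the distinction between $\xb\times\Vb$ and $\xb\times\Vb_T$, and the sign of $\xb\times(\xb\times\,\cdot\,)$ straight. Once that identity is in hand, the particular cases and the curl product rule are essentially algebra, and the remaining identities are pure coordinate calculation.
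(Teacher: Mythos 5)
Your proof is correct, but it is worth noting that the paper does not actually prove this lemma at all: it simply cites N\'ed\'elec \cite[Thm.~2.5.19]{nedelec2001} and moves on. What you supply is therefore a self-contained verification that the paper omits. Your route --- checking the six scalar identities by direct substitution into \eqref{eq:surf_div}--\eqref{eq:scal_surf_curl}, establishing the decomposition \eqref{eq:cross} componentwise in the frame $(\bs{e_\theta},\bs{e_\varphi},\xb)$, and then deriving the three particular curl identities and the product rule \eqref{eq:cross-prod} as corollaries of \eqref{eq:cross} rather than by fresh coordinate computations --- is exactly the efficient way to organize the verification, and all the individual steps check out: the frame relations $\xb\times\bs{e_\theta}=\bs{e_\varphi}$, $\xb\times\bs{e_\varphi}=-\bs{e_\theta}$ are consistent with the right-handed orientation implied by \eqref{eq:curl}, the cancellation of mixed partials handles the two vanishing identities, and your final triple-product manipulation correctly identifies $-(\xb\cdot\Vb)\,\xb\times\nabla_S u-[\nabla_S u\cdot(\xb\times\Vb)]\xb$ with $\nabla_S u\times\Vb$. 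The only caveat worth recording is that the coordinate formulas of Definition~\ref{def:loc_operators} degenerate at the poles ($\sin\theta=0$), so the computation is literally valid only on the complement of the poles; since all the asserted identities are coordinate-free, they extend there by continuity (or by rotating the coordinate axes), which is the standard resolution and does not affect the substance of your argument.
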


The operators of Definition \ref{def:loc_operators} also satisfy the following properties for continuously differentiable $u$ and $\Vb$,
\begin{align*}
& \int_{\Stwo}u\DD_S^0\{\Vb\}\ud\Omega = -\int_{\Stwo}\bs{\GG}_S^0\{u\}\cdot\Vb\ud\Omega,\\
& \int_{\Stwo}u\CC_S^0\{\Vb\}\ud\Omega = \int_{\Stwo}\bs{\CC}_S^0\{u\}\cdot\Vb\ud\Omega,
\end{align*}
which we prove in the proof of Theorem \ref{thm:loc_ajdoints} to identify the adjoints of the operators of Definition \ref{def:loc_operators}. We recall that operators $\QQ$ such as the surface divergence \eqref{eq:surf_div} and scalar surface curl \eqref{eq:scal_surf_curl} have adjoints $\QQ^*$ defined by
\begin{align*}
\int_{\Stwo}u\QQ\{\Vb\}\ud\Omega = \int_{\Stwo}\QQ^*\{u\}\cdot\Vb\ud\Omega \quad \forall u,\forall\Vb.
\end{align*}
Operators $\bs{\QQ}$ such as the surface gradient \eqref{eq:surf_grad} and vector surface curl \eqref{eq:vect_surf_curl} have adjoints $\bs{\QQ}^*$ defined by
\begin{align*}
\int_{\Stwo}\Vb\cdot\bs{\QQ}\{u\}\ud\Omega = \int_{\Stwo}\bs{\QQ}^*\{\Vb\}u\ud\Omega \quad \forall u,\forall\Vb.
\end{align*}
Finally, operators $\bs{\QQ}$ such as the curl \eqref{eq:curl} have adjoints $\bs{\QQ}^*$ defined by
\begin{align*}
\int_{\Stwo}\bs{W}\cdot\bs{\QQ}\{\Vb\}\ud\Omega = \int_{\Stwo}\bs{\QQ}^*\{\bs{W}\}\cdot\Vb\ud\Omega \quad \forall\Vb,\forall\bs{W}.
\end{align*}

\begin{theorem}[Local adjoints]\label{thm:loc_ajdoints}
Let $u:\Stwo\rightarrow\R$ and $\Vb:\Stwo\rightarrow\R^3$ be continuously differentiable. The adjoint surface divergence $(\DD_S^0)^*\{u\}:\Stwo\to T\Stwo$ is the tangential vector field defined by
\begin{align}
(\DD_S^0)^*\{u\} = -\partial_\theta u\bs{e_\theta} - \csc\theta\partial_\varphi u\bs{e_\varphi} = -\bs{\GG}_S^0\{u\}.
\label{eq:surf_div_adj}
\end{align}
The adjoint scalar surface curl $(\CC_S^0)^*\{u\}:\Stwo\rightarrow T\Stwo$ is the tangential vector field 
\begin{align} 
(\CC_S^0)^*\{u\} = -\csc\theta\partial_\varphi u\bs{e_\theta} + \partial_\theta u\bs{e_\varphi} = \bs{\CC}_S^0\{u\}.
\label{eq:scal_surf_curl_adj}
\end{align}
The adjoint surface gradient $(\bs{\GG}_S^0)^*\{\Vb\}:\Stwo\to\R$ is the scalar field defined by
\begin{align}
(\bs{\GG}_S^0)^*\{\Vb\} = \csc\theta\partial_\theta(\sin\theta V^\theta) + \csc\theta\partial_\varphi V^\varphi = -\DD_S^0\{\Vb\}.
\label{eq:surf_grad_adj}
\end{align}
The adjoint vector surface curl $(\bs{\CC}_S^0)^*\{\Vb\}:\Stwo\to\R$ is the scalar field defined by
\begin{align}
(\bs{\CC}_S^0)^*\{\Vb\} = -\csc\theta\partial_\theta(\sin\theta V^\varphi) + \csc\theta\partial_\varphi V^\theta = \CC_S^0\{\Vb\}.
\label{eq:vect_surf_curl_adj}
\end{align}
Finally, the adjoint curl $(\bs{\CC}^0)^*\{\Vb\}:\Stwo\to\R^3$ is the vector field defined by
\begin{align}
(\bs{\CC}^0)^*\{\Vb\} = & \; [\csc\theta\partial_\varphi V^x + V^\varphi]\bs{e_\theta} + [-V^\theta - \partial_\theta V^x]\bs{e_\varphi}\nonumber\\
& \; + [\csc\theta\partial_\theta(\sin\theta V^\varphi) - \csc\theta\partial_\varphi V^\theta]\xb,
\label{eq:curl_adj}
\end{align}
which, in general, is not equal to $\bs{\CC}^0\{\Vb\}$.
\end{theorem}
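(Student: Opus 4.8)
\emph{Proof proposal.} The plan is to derive all five adjoints from one ingredient --- the surface divergence theorem on the closed surface $\Stwo$, namely $\int_{\Stwo}\DD_S^0\{\bs{T}\}\ud\Omega=0$ for every continuously differentiable tangential field $\bs{T}$ --- combined with the product rule \eqref{eq:div-prod} and the scalar triple product identity $\bs{a}\cdot(\bs{b}\times\bs{c})=\bs{c}\cdot(\bs{a}\times\bs{b})$. The divergence theorem itself is immediate from the coordinate form \eqref{eq:surf_div}: after cancelling the $\csc\theta$ against $\ud\Omega=\sin\theta\ud\theta\ud\varphi$, the $\partial_\varphi$ term integrates to zero by $2\pi$-periodicity in $\varphi$, while $\int_0^\pi\partial_\theta(\sin\theta\,T^\theta)\ud\theta=[\sin\theta\,T^\theta]_0^\pi=0$ because $T^\theta$ is bounded (the only place needing care is the poles, where the frame $(\bs{e_\theta},\bs{e_\varphi})$ degenerates, but that is exactly where the $\sin\theta$ weight kills the boundary contribution).

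For the divergence/gradient pair I would apply this to $u\Vb$: since $\DD_S^0$ only reads tangential components and $\DD_S^0\{uV^x\xb\}=0$, the product rule \eqref{eq:div-prod} gives
\[
0=\int_{\Stwo}\DD_S^0\{u\Vb\}\ud\Omega=\int_{\Stwo}\bs{\GG}_S^0\{u\}\cdot\Vb\ud\Omega+\int_{\Stwo}u\,\DD_S^0\{\Vb\}\ud\Omega,
\]
which is the first integration-by-parts identity stated before the theorem. As it holds for all $u$ and all $\Vb$, it identifies $(\DD_S^0)^*\{u\}=-\bs{\GG}_S^0\{u\}$; reading the same identity with $\Vb$ paired first gives $(\bs{\GG}_S^0)^*\{\Vb\}=-\DD_S^0\{\Vb\}$. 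The coordinate expressions \eqref{eq:surf_div_adj} and \eqref{eq:surf_grad_adj} are then read off from \eqref{eq:surf_grad} and \eqref{eq:surf_div}.

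The scalar/vector surface curl pair is handled the same way after replacing $\Vb$ by $\xb\times\Vb$, which is both tangential (the normal part of $\Vb$ lies in the kernel of $\xb\times{}$) and continuously differentiable. Since $\CC_S^0\{\Vb\}=\DD_S^0\{\xb\times\Vb\}$ by \eqref{eq:scal_surf_curl}, the same computation gives $0=\int_{\Stwo}\bs{\GG}_S^0\{u\}\cdot(\xb\times\Vb)\ud\Omega+\int_{\Stwo}u\,\CC_S^0\{\Vb\}\ud\Omega$, and the triple product identity $\bs{\GG}_S^0\{u\}\cdot(\xb\times\Vb)=\Vb\cdot(\bs{\GG}_S^0\{u\}\times\xb)=-\Vb\cdot\bs{\CC}_S^0\{u\}$ turns this into $\int_{\Stwo}u\,\CC_S^0\{\Vb\}\ud\Omega=\int_{\Stwo}\bs{\CC}_S^0\{u\}\cdot\Vb\ud\Omega$, the second identity stated before the theorem. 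Hence $(\CC_S^0)^*\{u\}=\bs{\CC}_S^0\{u\}$ and $(\bs{\CC}_S^0)^*\{\Vb\}=\CC_S^0\{\Vb\}$, and \eqref{eq:scal_surf_curl_adj}, \eqref{eq:vect_surf_curl_adj} follow from \eqref{eq:vect_surf_curl} and \eqref{eq:scal_surf_curl}.

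The full curl is the one genuinely different case, because its range is $\R^3$ rather than $T\Stwo$, so the divergence theorem does not apply to it directly. Here I would not integrate by parts on $\bs{\CC}^0$ itself, but first substitute the algebraic identity \eqref{eq:cross}, $\bs{\CC}^0\{\Vb\}=\xb\times\Vb-\bs{\CC}_S^0\{\xb\cdot\Vb\}-\CC_S^0\{\Vb\}\,\xb$, pair against an arbitrary continuously differentiable $\bs{W}$, and move each of the three terms onto $\Vb$: the first via $\bs{W}\cdot(\xb\times\Vb)=-\Vb\cdot(\xb\times\bs{W})$; the second via the vector-surface-curl adjoint just proved, $\int_{\Stwo}\bs{W}\cdot\bs{\CC}_S^0\{\xb\cdot\Vb\}\ud\Omega=\int_{\Stwo}\CC_S^0\{\bs{W}\}(\xb\cdot\Vb)\ud\Omega=\int_{\Stwo}\Vb\cdot(\CC_S^0\{\bs{W}\}\,\xb)\ud\Omega$; the third via the scalar-surface-curl adjoint, $\int_{\Stwo}(\bs{W}\cdot\xb)\CC_S^0\{\Vb\}\ud\Omega=\int_{\Stwo}\bs{\CC}_S^0\{\bs{W}\cdot\xb\}\cdot\Vb\ud\Omega$. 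Summing, $(\bs{\CC}^0)^*\{\bs{W}\}=-\xb\times\bs{W}-\bs{\CC}_S^0\{\bs{W}\cdot\xb\}-\CC_S^0\{\bs{W}\}\,\xb$ --- the decomposition in Lemma~\ref{lem:loc_vect_calc} for $\bs{\CC}^0$ but with the sign of the $\xb\times\bs{W}$ term reversed --- and inserting $\xb\times\bs{e_\theta}=\bs{e_\varphi}$, $\xb\times\bs{e_\varphi}=-\bs{e_\theta}$ together with \eqref{eq:scal_surf_curl}--\eqref{eq:vect_surf_curl} produces \eqref{eq:curl_adj}; the uncancelled sign in the $\xb\times\bs{W}$ term shows $(\bs{\CC}^0)^*\ne\bs{\CC}^0$ in general. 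The main obstacle is precisely this last step: keeping the three triple-product/adjoint substitutions and their signs consistent, and checking that all the integrations by parts remain boundary-free despite the degeneracy of the spherical frame at the poles --- which holds because every field involved is smooth in the ambient Cartesian coordinates, so each $\csc\theta$ is always paired with a vanishing $\sin\theta$ there.
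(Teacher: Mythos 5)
Your proposal is correct, and for four of the five adjoints it is the paper's own argument: the divergence/gradient pair via the product rule \eqref{eq:div-prod} and the absence of boundary on $\Stwo$ (the paper simply invokes ``$\Stwo$ has no boundary'' where you spell out the coordinate check at the poles), and the curl pair via the substitution $\Vb\mapsto\xb\times\Vb$ and the scalar triple product. The genuine difference is the full curl. The paper proves \eqref{eq:curl_adj} by a direct coordinate computation: it writes $\int_{\Stwo}\bs{W}\cdot(\nabla\times\Vb)\ud\Omega$ as a double integral in $(\theta,\varphi)$ and integrates by parts term by term, and only afterwards records, as a remark, the intrinsic identity $(\bs{\CC}^0)^*\{\Vb\}=-\xb\times\Vb-\bs{\CC}_S^0\{\xb\cdot\Vb\}-\CC_S^0\{\Vb\}\xb$. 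You go the other way: you start from the decomposition \eqref{eq:cross} of Lemma~\ref{lem:loc_vect_calc}, pair against $\bs{W}$, and transfer each of the three pieces using the scalar triple product and the two surface-curl adjoints you have just established, obtaining the intrinsic formula first and then reading off \eqref{eq:curl_adj} from $\xb\times\bs{e_\theta}=\bs{e_\varphi}$, $\xb\times\bs{e_\varphi}=-\bs{e_\theta}$ (your coordinate conversion checks out against \eqref{eq:curl_adj} componentwise). Your route buys a coordinate-free derivation that explains structurally why only the sign of the $\xb\times\Vb$ term flips, hence why $(\bs{\CC}^0)^*\neq\bs{\CC}^0$ in general, and it proves the paper's post-theorem remark as a byproduct; the cost is a dependence on \eqref{eq:cross}, which the paper cites from N\'ed\'elec rather than proving, whereas the paper's brute-force coordinate computation is self-contained. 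Both are valid proofs of the statement.
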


\begin{proof}
For the adjoint surface divergence \eqref{eq:surf_div_adj} and surface gradient \eqref{eq:surf_grad_adj}, one can directly note that
\begin{align}
\int_{\Stwo}u\nabla_S\cdot\Vb\ud\Omega = \int_{\Stwo}\nabla_S\cdot(u\Vb)\ud\Omega - \int_{\Stwo}\nabla_S u\cdot\Vb\ud\Omega = -\int_{\Stwo}\nabla_S u\cdot\Vb\ud\Omega,
\end{align}
for any continuously differentiable $u$ and $\Vb$, since $\Stwo$ has no boundary.

Similarly, for the adjoint scalar surface curl \eqref{eq:scal_surf_curl_adj} and vector surface curl \eqref{eq:vect_surf_curl_adj}, we note that
\begin{align}
\int_{\Stwo}u\nabla_S\cdot(\xb\times\Vb)\ud\Omega = -\int_{\Stwo}\nabla_S u\cdot(\xb\times\Vb)\ud\Omega = \int_{\Stwo}(\xb\times\nabla_S u)\cdot\Vb\ud\Omega,
\end{align}
for any continuously differentiable $u$ and $\Vb$.

Finally, for the adjoint curl \eqref{eq:curl_adj}, for any continuously differentiable $\Vb$ and $\bs{W}$,
\begin{align*}
& \; \int_{\Stwo}\bs{W}\cdot(\nabla_S\times\Vb)\ud\Omega\\
= & \; \int_{-\pi}^\pi\int_0^\pi\Big(W^\theta[\csc\theta\partial_\varphi V^x - V^\varphi] + W^\varphi[V^\theta - \partial_\theta V^x]\nonumber\\
& \; + W^x[\csc\theta\partial_\theta(\sin\theta V^\varphi) - \csc\theta\partial_\varphi V^\theta]\Big)\sin\theta\ud\theta\ud\varphi,\\
= & \; \int_{-\pi}^\pi\int_0^\pi\Big(V^\theta[\csc\theta\partial_\varphi W^x + W^\varphi] + V^\varphi[-W^\theta - \partial_\theta W^x]\nonumber\\
& \; + V^x[\csc\theta\partial_\theta(\sin\theta W^\varphi) - \csc\theta\partial_\varphi W^\theta]\Big)\sin\theta\ud\theta\ud\varphi.
\end{align*}
This completes the proof.
\end{proof}

We note that the adjoint curl satisfies
\begin{align*} 
(\bs{\CC}^0)^*\{\Vb\} = -\xb\times\Vb - \bs{\CC}_S^0\{\xb\cdot\Vb\} - \CC_S^0\{\Vb\}\xb,
\end{align*}
which implies
\begin{align*}
& (\bs{\CC}^0)^*\{\bs{\GG}_S^0\{u\}\} = -\bs{\CC}_S^0\{u\},\\
& (\bs{\CC}^0)^*\{\bs{\CC}_S^0\{u\}\} = \bs{\GG}_S^0\{u\} + \LL_S^0\{u\}\xb,\\
& (\bs{\CC}^0)^*\{u\xb\} = -\bs{\CC}_S^0\{u\}.
\end{align*}

Next, we \textit{formally} introduce \textit{nonlocal} analogues of the differential operators of Definition \ref{def:loc_operators}. That is, we first focus on their definitions for functions and kernels that are smooth on $\Stwo$, which can then be extended to more general classes of functions in Section \ref{sec:nonloc_anal}. While local operators act on scalar and vector fields defined on $\Stwo$ (\textit{one-point functions}), nonlocal operators act on scalar and vector fields defined on $\Stwo\times\Stwo$ (\textit{two-point functions}). This is similar to the Euclidean case \cite{du2013a}. However, in $\R^n$, nonlocal operators rely on \textit{antisymmetric} Euclidean vector fields $\bs{\beta}(\xb,\yb):\Stwo\times\Stwo\rightarrow\R^3$, that is, such that $\bs{\beta}(\xb,\yb) = -\bs{\beta}(\yb,\xb)$ for all $\xb,\yb\in\R^n$. On the sphere, we introduce the following concept.

\begin{definition}[Geodesic symmetry]
A geodesically symmetric vector field $\bs{\alpha}(\xb,\yb)$ is a vector field $\bs{\alpha}:\Stwo\times\Stwo\rightarrow T\Stwo$ that lives in the tangent plane at $\xb$, and verifies $\vert\bs{\alpha}(\xb,\yb)\vert = \vert\bs{\alpha}(\yb,\xb)\vert$ for all $\xb,\yb\in\Stwo$.
\end{definition}

We can construct a geodesically symmetric vector field $\bs{\alpha}$ from an antisymmetric vector field $\bs{\beta}:\Stwo\times\Stwo\rightarrow\R^3$ of the form $\bs{\beta}(\xb,\yb)=f(\xb\cdot\yb)[\yb-\xb]$ for some $f:[-1,1]\to\R$ by removing its component along $\xb$,
\begin{align}
\bs{\alpha}(\xb,\yb) = \bs{\beta}(\xb,\yb)-\left[\xb\cdot\bs{\beta}(\xb,\yb)\right]\xb.
\label{eq:geodkernel}
\end{align}
This is particularly important as the sphere may represent a coupling interface between physical phenomena inside and outside of the unit ball and one may be given a kernel $\bs{\beta}$. Let us also stress that antisymmetry implies geodesic symmetry, but the converse is not always true. Finally, for a geodesically symmetric vector field, the condition $\vert\bs{\alpha}(\xb,\yb)\vert = \vert\bs{\alpha}(\yb,\xb)\vert$ implies that $\vert\bs{\alpha}(\xb,\yb)\vert=\vert\xb\times\bs{\alpha}(\xb,\yb)\vert = \vert\yb\times\bs{\alpha}(\yb,\xb)\vert$ for all $\xb,\yb\in\Stwo$.

\begin{definition}[Nonlocal operators]\label{def:nonloc_operators}
Let $u:\Stwo\times\Stwo\rightarrow\R$ be a scalar field, $\Vb:\Stwo\times\Stwo\rightarrow\R^3$ a vector field, $\bs{\alpha}:\Stwo\times\Stwo\rightarrow T\Stwo$ a geodesically symmetric vector field, and $\bs{\beta}:\Stwo\times\Stwo\rightarrow\R^3$ an antisymmetric vector field. We assume that all functions are smooth. We define the nonlocal surface divergence $\DD_S\{\Vb\}:\Stwo\to\R$ as the scalar field
\begin{align}
\DD_S\{\Vb\}(\xb) = \int_{\Stwo}\left[\Vb(\xb,\yb)\cdot\bs{\alpha}(\xb,\yb) - \Vb(\yb,\xb)\cdot\bs{\alpha}(\yb,\xb)\right]\ud\Omega(\yb).
\label{eq:nonloc_surf_div}
\end{align}
We define the nonlocal scalar surface curl $\CC_S\{\Vb\}:\Stwo\to\R$ as the scalar field $\CC_S\{\Vb\} = \DD_S\{\xb\times\Vb\}$, i.e.,
\begin{align}
\CC_S\{\Vb\}(\xb) = \int_{\Stwo}\left\{\Vb(\yb,\xb)\cdot\left[\yb\times\bs{\alpha}(\yb,\xb)\right]-\Vb(\xb,\yb)\cdot\left[\xb\times\bs{\alpha}(\xb,\yb)\right]\right\}\ud\Omega(\yb).
\label{eq:nonloc_scal_surf_curl}
\end{align}
We define the nonlocal surface gradient $\bs{\GG}_S\{u\}:\Stwo\to T\Stwo$ as the tangential vector field
\begin{align}
\bs{\GG}_S\{u\}(\xb) = \int_{\Stwo}\left[u(\yb,\xb) - u(\xb,\yb)\right]\bs{\alpha}(\xb,\yb)\ud\Omega(\yb).
\label{eq:nonloc_surf_grad}
\end{align}
We define the nonlocal vector surface curl $\bs{\CC}_S\{u\}:\Stwo\to T\Stwo$ as the tangential vector field $\bs{\CC}_S\{u\} = \xb\times\bs{\GG}_S\{u\}$, i.e.,
\begin{align}
\bs{\CC}_S\{u\}(\xb) = \int_{\Stwo}\left[u(\yb,\xb) - u(\xb,\yb)\right]\xb\times\bs{\alpha}(\xb,\yb)\ud\Omega(\yb).
\label{eq:nonloc_vect_surf_curl}
\end{align}
Finally, we define the nonlocal curl $\bs{\CC}\{\Vb\}:\Stwo\to\R^3$ as the vector field
\begin{align}
\bs{\CC}\{\Vb\}(\xb) = \int_{\Stwo}\bs{\beta}(\xb,\yb)\times\left[\Vb(\yb,\xb) - \Vb(\xb,\yb)\right]\ud\Omega(\yb).
\label{eq:nonloc_curl}
\end{align}
\end{definition}

Let us now compute the adjoints of \eqref{eq:nonloc_surf_div}--\eqref{eq:nonloc_curl}. Nonlocal operators $\QQ$ such as the nonlocal surface divergence \eqref{eq:nonloc_surf_div} and nonlocal scalar surface curl \eqref{eq:nonloc_scal_surf_curl} have adjoints $\QQ^*$ acting on one-point scalar fields, resulting in two-point vector fields defined by
\begin{align*}
\int_{\Stwo}u(\xb)\QQ\{\Vb\}(\xb)\ud\Omega(\xb) = \iint_{\Stwo\times\Stwo}\QQ^*\{u\}(\xb,\yb)\cdot\Vb(\xb,\yb)\ud\Omega(\xb)\ud\Omega(\yb). 
\end{align*}
Nonlocal operators $\bs{\QQ}$ such as the nonlocal surface gradient \eqref{eq:nonloc_surf_grad} and nonlocal vector surface curl \eqref{eq:nonloc_vect_surf_curl} have adjoints $\bs{\QQ}^*$ acting on one-point vector fields, resulting in two-point scalar fields defined by
\begin{align*}
\int_{\Stwo}\Vb(\xb)\cdot\bs{\QQ}\{u\}(\xb)\ud\Omega(\xb) = \iint_{\Stwo\times\Stwo}\bs{\QQ}^*\{\Vb\}(\xb,\yb)u(\xb,\yb)\ud\Omega(\xb)\ud\Omega(\yb). 
\end{align*}
Finally, nonlocal operators $\bs{\QQ}$ such as the nonlocal curl \eqref{eq:nonloc_curl} have adjoints $\bs{\QQ}^*$ acting on one-point vector fields, resulting in two-point vector fields defined by
\begin{align*}
\int_{\Stwo}\bs{W}(\xb)\cdot\bs{\QQ}\{\Vb\}(\xb)\ud\Omega(\xb) = \iint_{\Stwo\times\Stwo}\bs{\QQ}^*\{\bs{W}\}(\xb,\yb)\cdot\Vb(\xb,\yb)\ud\Omega(\xb)\ud\Omega(\yb). 
\end{align*}
An important distinction between local and nonlocal operators is that the adjoints of the former involve the same set of operators (e.g., $(\DD_S^0)^*=-\GG_S^0$), while the adjoints of the latter involve different, \textit{nonintegral} operators.

We recall that, for any vector $\bs{a},\bs{b},\bs{c}\in\R^3$,
\begin{align}
& \bs{a}\cdot(\bs{b}\times\bs{c}) = \bs{b}\cdot(\bs{c}\times\bs{a}) = \bs{c}\cdot(\bs{a}\times\bs{b}),\label{eq:saclar-triple-prod}\\
& \bs{a}\times(\bs{b}\times\bs{c}) = (\bs{a}\cdot\bs{c})\bs{b} - (\bs{a}\cdot\bs{b})\bs{c},\label{eq:vector-triple-prod}\\
& [\bs{a}\cdot(\bs{b}\times\bs{c})]\bs{a} = (\bs{a}\times\bs{b})\times(\bs{a}\times\bs{c}).\label{eq:vector-triple-prod-2}
\end{align}

\begin{theorem}[Nonlocal adjoints]\label{thm:nonloc_ajdoints}
Let $u:\Stwo\rightarrow\R$ be a smooth scalar field and $\Vb:\Stwo\rightarrow\R^3$ a smooth vector field. The adjoint nonlocal surface divergence $\DD_S^*\{u\}:\Stwo\times\Stwo\to T\Stwo$ is the tangential vector field
\begin{align}
\DD_S^*\{u\}(\xb,\yb) = \left[u(\xb) - u(\yb)\right]\bs{\alpha}(\xb,\yb).
\label{eq:nonloc_surf_div_adj}
\end{align}
The adjoint nonlocal scalar surface curl $\CC_S^*\{u\}:\Stwo\times\Stwo\to T\Stwo$ is the tangential vector field
\begin{align}
\CC_S^*\{u\}(\xb,\yb) = \left[u(\yb)-u(\xb)\right]\xb\times\bs{\alpha}(\xb,\yb).
\label{eq:nonloc_scal_surf_curl_adj}
\end{align}
The adjoint nonlocal surface gradient $\bs{\GG}_S^*\{\Vb\}:\Stwo\times\Stwo\to\R$ is the scalar field
\begin{align}
\bs{\GG}_S^*\{\Vb\}(\xb,\yb) = \Vb(\yb)\cdot\bs{\alpha}(\yb,\xb) - \Vb(\xb)\cdot\bs{\alpha}(\xb,\yb).
\label{eq:nonloc_surf_grad_adj}
\end{align}
The adjoint nonlocal vector surface curl $\bs{\CC}_S^*\{\Vb\}:\Stwo\times\Stwo\to\R$ is the scalar field
\begin{align}
\bs{\CC}_S^*\{\Vb\}(\xb,\yb) = \Vb(\yb)\cdot\left[\yb\times\bs{\alpha}(\yb,\xb)\right] - \Vb(\xb)\cdot\left[\xb\times\bs{\alpha}(\xb,\yb)\right].
\label{eq:nonloc_vect_surf_curl_adj}
\end{align}
Finally, the adjoint nonlocal curl $\bs{\CC}^*\{\Vb\}:\Stwo\times\Stwo\to\R^3$ is the vector field
\begin{align}
\bs{\CC}^*\{\Vb\}(\xb,\yb) = \bs{\beta}(\xb,\yb)\times\left[\Vb(\xb) + \Vb(\yb)\right].
\label{eq:nonloc_curl_adj}
\end{align}
\end{theorem}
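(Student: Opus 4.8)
The plan is to verify each of the five adjoint formulas by substituting the definitions from Definition~\ref{def:nonloc_operators} into the pairing identity that defines the corresponding adjoint, and then manipulating the resulting double integral. In every case the strategy is the same: expand the single integral defining $\QQ\{\cdot\}(\xb)$ inside the outer integral over $\xb$, use Fubini to view the result as an integral over $\Stwo\times\Stwo$, and symmetrize in $\xb\leftrightarrow\yb$ where the integrand naturally splits into a "$\xb$-part" and a "$\yb$-part." The two halves combine, after relabeling, into the claimed two-point expression.

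Concretely, for the nonlocal surface divergence I would write
\begin{align*}
\int_{\Stwo}u(\xb)\,\DD_S\{\Vb\}(\xb)\,\ud\Omega(\xb)
= \iint_{\Stwo\times\Stwo} u(\xb)\big[\Vb(\xb,\yb)\cdot\bs{\alpha}(\xb,\yb) - \Vb(\yb,\xb)\cdot\bs{\alpha}(\yb,\xb)\big]\ud\Omega(\yb)\ud\Omega(\xb),
\end{align*}
swap the names of the dummy variables in the second term, and collect to get $\iint (u(\xb)-u(\yb))\,\Vb(\xb,\yb)\cdot\bs{\alpha}(\xb,\yb)$, which by definition of the adjoint identifies $\DD_S^*\{u\}(\xb,\yb) = (u(\xb)-u(\yb))\bs{\alpha}(\xb,\yb)$, matching \eqref{eq:nonloc_surf_div_adj}. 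The scalar surface curl \eqref{eq:nonloc_scal_surf_curl_adj} follows identically, carrying $\xb\times\bs{\alpha}(\xb,\yb)$ through the relabeling, or alternatively by invoking $\CC_S\{\Vb\} = \DD_S\{\xb\times\Vb\}$ together with the divergence case. For the surface gradient I would pair against a one-point vector field $\Vb(\xb)$, expand $\bs{\GG}_S\{u\}(\xb)$ via \eqref{eq:nonloc_surf_grad}, split $u(\yb,\xb)-u(\xb,\yb)$ into two terms, relabel in one of them, and read off $\bs{\GG}_S^*\{\Vb\}(\xb,\yb) = \Vb(\yb)\cdot\bs{\alpha}(\yb,\xb) - \Vb(\xb)\cdot\bs{\alpha}(\xb,\yb)$; the vector surface curl \eqref{eq:nonloc_vect_surf_curl_adj} is the same computation with $\bs{\alpha}$ replaced by $\xb\times\bs{\alpha}$ (or $\yb\times\bs{\alpha}$ after relabeling). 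For the curl, I would pair against $\bs{W}(\xb)$, expand \eqref{eq:nonloc_curl}, use the scalar triple product identity \eqref{eq:saclar-triple-prod} to move the cross product onto $\bs{W}$, i.e. $\bs{W}(\xb)\cdot(\bs{\beta}(\xb,\yb)\times[\Vb(\yb,\xb)-\Vb(\xb,\yb)]) = [\Vb(\yb,\xb)-\Vb(\xb,\yb)]\cdot(\bs{W}(\xb)\times\bs{\beta}(\xb,\yb)) = [\bs{\beta}(\xb,\yb)\times\bs{W}(\xb)]\cdot[\Vb(\xb,\yb)-\Vb(\yb,\xb)]$, then split the two $\Vb$ terms and relabel the second using antisymmetry $\bs{\beta}(\yb,\xb) = -\bs{\beta}(\xb,\yb)$; the two contributions merge into $\bs{\beta}(\xb,\yb)\times[\bs{W}(\xb)+\bs{W}(\yb)]$, giving \eqref{eq:nonloc_curl_adj}.

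The only step requiring genuine care rather than bookkeeping is the curl: one must apply the triple-product identity in the correct orientation and then track the sign flip coming from the antisymmetry of $\bs{\beta}$ when relabeling, so that the relative minus sign between the two $\Vb$ terms turns into a plus sign between $\bs{W}(\xb)$ and $\bs{W}(\yb)$. For all five identities, smoothness of $u$, $\Vb$, $\bs{\alpha}$, $\bs{\beta}$ on the compact manifold $\Stwo\times\Stwo$ guarantees absolute integrability and hence the validity of Fubini's theorem; no boundary terms appear since $\Stwo$ is closed. I would present the divergence and curl computations in detail and remark that the scalar surface curl and vector surface curl follow by the substitutions $\Vb\mapsto\xb\times\Vb$ and $\bs{\alpha}\mapsto\xb\times\bs{\alpha}$ together with the already-established divergence and gradient cases, to avoid repetition.
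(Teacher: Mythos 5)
Your proposal is correct and follows essentially the same route as the paper's proof: expand the pairing as a double integral, relabel the dummy variables $\xb\leftrightarrow\yb$ in the appropriate term, and for the curl invoke the scalar triple product identity \eqref{eq:saclar-triple-prod} together with the antisymmetry $\bs{\beta}(\yb,\xb)=-\bs{\beta}(\xb,\yb)$. The only difference is presentational: you make the Fubini justification and the antisymmetry step explicit, which the paper leaves implicit.
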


\begin{proof}
Let us start with the adjoint nonlocal surface divergence. By switching the variables in the integral, we have
\begin{align*} 
& \; \int_{\Stwo}u\DD_S\{\Vb\}\ud\Omega\\
= & \; \iint_{\Stwo\times\Stwo}u(\xb)\left[\Vb(\xb,\yb)\cdot\bs{\alpha}(\xb,\yb) - \Vb(\yb,\xb)\cdot\bs{\alpha}(\yb,\xb)\right]\ud\Omega(\yb)\ud\Omega(\xb),\\
= & \; \iint_{\Stwo\times\Stwo}\left[u(\xb)\Vb(\xb,\yb)\cdot\bs{\alpha}(\xb,\yb) - u(\yb)\Vb(\xb,\yb)\cdot\bs{\alpha}(\xb,\yb)\right]\ud\Omega(\yb)\ud\Omega(\xb),\\
= & \; \iint_{\Stwo\times\Stwo}\left[u(\xb)-u(\yb)\right]\bs{\alpha}(\xb,\yb)\cdot\Vb(\xb,\yb)\ud\Omega(\xb)\ud\Omega(\yb).
\end{align*}

For the adjoint nonlocal scalar surface curl, the same trick yields
\begin{align*} 
\int_{\Stwo}u\CC_S\{\Vb\}\ud\Omega = \iint_{\Stwo\times\Stwo}\left\{\left[u(\yb)-u(\xb)\right]\xb\times\bs{\alpha}(\xb,\yb)\right\}\cdot\Vb(\xb,\yb)\ud\Omega(\xb)\ud\Omega(\yb).
\end{align*}

Let us continue with the adjoint nonlocal surface gradient and the nonlocal vector surface curl. Similarly,
\begin{align*} 
\int_{\Stwo}\Vb\cdot\bs{\GG}_S\{u\}\ud\Omega = \iint_{\Stwo\times\Stwo}\left[\Vb(\yb)\cdot\bs{\alpha}(\yb,\xb)-\Vb(\xb)\cdot\bs{\alpha}(\xb,\yb)\right]u(\xb,\yb)\ud\Omega(\xb)\ud\Omega(\yb),
\end{align*}
and
\begin{align*} 
& \; \int_{\Stwo}\Vb\cdot\bs{\CC}_S\{u\}\ud\Omega\\
= & \; \iint_{\Stwo\times\Stwo}\left\{\Vb(\yb)\cdot\left[\yb\times\bs{\alpha}(\yb,\xb)\right]-\Vb(\xb)\cdot\left[\xb\times\bs{\alpha}(\xb,\yb)\right]\right\}u(\xb,\yb)\ud\Omega(\xb)\ud\Omega(\yb).
\end{align*}

We finish with the adjoint nonlocal curl. Using \eqref{eq:saclar-triple-prod}, we obtain
\begin{align*} 
& \; \int_{\Stwo}\bs{W}\cdot\bs{\CC}\{\Vb\}\ud\Omega\\
= & \; \iint_{\Stwo\times\Stwo}\bs{W}(\xb)\cdot\left[\bs{\beta}(\xb,\yb)\times\Vb(\yb,\xb)-\bs{\beta}(\xb,\yb)\times\Vb(\xb,\yb)\right]\ud\Omega(\yb)\ud\Omega(\xb),\\
= & \; \iint_{\Stwo\times\Stwo}\bs{\beta}(\xb,\yb)\times\left[\bs{W}(\xb)+\bs{W}(\yb)\right]\cdot\Vb(\xb,\yb)\ud\Omega(\xb)\ud\Omega(\yb).
\end{align*}
\end{proof}

We show next that, by composing nonlocal operators and their adjoints, we obtain ``second-order'' nonlocal operators, including the diffusion operator \eqref{eq:nonloc_diff}.

\begin{theorem}[Nonlocal compositions]\label{thm:nonloc_composition}
Let $u:\Stwo\rightarrow\R$ be a smooth scalar field and $\Vb:\Stwo\rightarrow T\Stwo$ a smooth vector field. The compositions of nonlocal operators and their adjoints are given by
\begin{align*}
\DD_S\{\DD_S^*\{u\}\}(\xb) = & \; 2\int_{\Stwo}\left[u(\xb)-u(\yb)\right]\vert\bs{\alpha}(\xb,\yb)\vert^2\ud\Omega(\yb),\\
\CC_S\{\CC_S^*\{u\}\}(\xb) = & \; 2\int_{\Stwo}\left[u(\xb)-u(\yb)\right]\vert\bs{\alpha}(\xb,\yb)\vert^2\ud\Omega(\yb),\\
\bs{\GG}_S\{\bs{\GG}_S^*\{\Vb\}\}(\xb) = & \; 2\int_{\Stwo}\left[\Vb(\xb)\cdot\bs{\alpha}(\xb,\yb)-\Vb(\yb)\cdot\bs{\alpha}(\yb,\xb)\right]\bs{\alpha}(\xb,\yb)\ud\Omega(\yb),\\
\bs{\CC}_S\{\bs{\CC}_S^*\{\Vb\}\}(\xb) = & \; 2\int_{\Stwo}\left\{\Vb(\xb)\cdot\left[\xb\times\bs{\alpha}(\xb,\yb)\right]-\Vb(\yb)\cdot\left[\yb\times\bs{\alpha}(\yb,\xb)\right]\right\}\\
& \; \xb\times\bs{\alpha}(\xb,\yb)\ud\Omega(\yb),\\
 \bs{\CC}\{\bs{\CC}^*\{\Vb\}\}(\xb) = & \; 2\int_{\Stwo}\bs{\beta}(\xb,\yb)\times\left\{\left[\Vb(\xb,\yb)+\Vb(\yb,\xb)\right]\times\bs{\beta}(\xb,\yb)\right\}\ud\Omega(\yb).
\end{align*}
The first two are scalar fields, the next two are tangential vector fields, while the last one is a vector field. Moreover, if we choose 
\begin{align}
2\vert\bs{\alpha}(\xb,\yb)\vert^2=\rho_\delta(\vert\xb-\yb\vert),
\label{eq:alpharho}
\end{align}
for some kernel $\rho_\delta$, then $\DD_S\{\DD_S^*\}=\CC_S\{\CC_S^*\}=-\mathcal{L}_S^\delta$, where $\mathcal{L}_S^\delta$ is the nonlocal diffusion operator \eqref{eq:nonloc_diff} with kernel $\rho_\delta$.
\end{theorem}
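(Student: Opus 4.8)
The plan is to prove all five identities by the same two‑step recipe: substitute the explicit adjoint from Theorem~\ref{thm:nonloc_ajdoints} into the matching nonlocal operator of Definition~\ref{def:nonloc_operators}, and then collapse the integrand by relabelling the dummy variable $\xb\leftrightarrow\yb$ together with the structural properties of $\bs{\alpha}$ and $\bs{\beta}$. The three facts that do all the work are: geodesic symmetry $\vert\bs{\alpha}(\xb,\yb)\vert = \vert\bs{\alpha}(\yb,\xb)\vert$; its consequence, recorded just after \eqref{eq:geodkernel}, that $\vert\xb\times\bs{\alpha}(\xb,\yb)\vert = \vert\bs{\alpha}(\xb,\yb)\vert$ because $\bs{\alpha}(\xb,\yb)$ lies in the tangent plane at $\xb$ and $\vert\xb\vert = 1$; and the antisymmetry $\bs{\beta}(\xb,\yb) = -\bs{\beta}(\yb,\xb)$.

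For $\DD_S\{\DD_S^*\{u\}\}$, inserting $\Vb(\xb,\yb) = [u(\xb)-u(\yb)]\bs{\alpha}(\xb,\yb)$ into \eqref{eq:nonloc_surf_div} makes the two terms of the integrand equal to $[u(\xb)-u(\yb)]\vert\bs{\alpha}(\xb,\yb)\vert^2$ and $[u(\xb)-u(\yb)]\vert\bs{\alpha}(\yb,\xb)\vert^2$ (the minus sign in \eqref{eq:nonloc_surf_div} is absorbed by the sign flip $u(\yb)-u(\xb)$), and geodesic symmetry merges them into $2[u(\xb)-u(\yb)]\vert\bs{\alpha}(\xb,\yb)\vert^2$. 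The computation for $\CC_S\{\CC_S^*\{u\}\}$ is word‑for‑word the same after inserting \eqref{eq:nonloc_scal_surf_curl_adj} into \eqref{eq:nonloc_scal_surf_curl}, except that $\vert\xb\times\bs{\alpha}(\xb,\yb)\vert^2$ and $\vert\yb\times\bs{\alpha}(\yb,\xb)\vert^2$ appear in place of the norms of $\bs{\alpha}$; both equal $\vert\bs{\alpha}(\xb,\yb)\vert^2$ by the tangency consequence and geodesic symmetry, which is precisely why the two second‑order scalar operators agree. For $\bs{\GG}_S\{\bs{\GG}_S^*\{\Vb\}\}$ and $\bs{\CC}_S\{\bs{\CC}_S^*\{\Vb\}\}$ one substitutes the scalar adjoints \eqref{eq:nonloc_surf_grad_adj} and \eqref{eq:nonloc_vect_surf_curl_adj} into \eqref{eq:nonloc_surf_grad} and \eqref{eq:nonloc_vect_surf_curl}; the bracket $u(\yb,\xb)-u(\xb,\yb)$ becomes exactly twice the combination displayed in the theorem, while the outer tangent vector ($\bs{\alpha}(\xb,\yb)$, respectively $\xb\times\bs{\alpha}(\xb,\yb)$) is carried along untouched, giving the factor $2$ with no norm identity needed. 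For $\bs{\CC}\{\bs{\CC}^*\{\Vb\}\}$, antisymmetry of $\bs{\beta}$ gives $\bs{\CC}^*\{\Vb\}(\yb,\xb) = -\bs{\CC}^*\{\Vb\}(\xb,\yb)$, so \eqref{eq:nonloc_curl} produces $-2\int_{\Stwo}\bs{\beta}(\xb,\yb)\times\big(\bs{\beta}(\xb,\yb)\times[\Vb(\xb)+\Vb(\yb)]\big)\ud\Omega(\yb)$; flipping the inner cross product (equivalently, applying \eqref{eq:vector-triple-prod}) rewrites this in the stated form.

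The claim about the diffusion operator is then immediate: under the normalization \eqref{eq:alpharho}, the first formula reads $\DD_S\{\DD_S^*\{u\}\}(\xb) = \int_{\Stwo}[u(\xb)-u(\yb)]\rho_\delta(\vert\xb-\yb\vert)\ud\Omega(\yb)$, which equals $-\mathcal{L}_S^\delta\{u\}(\xb)$ by \eqref{eq:nonloc_diff}, and equality of the first two compositions gives the same for $\CC_S\{\CC_S^*\}$.

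The one point that is not purely mechanical is checking that geodesic symmetry, being only an equality of magnitudes rather than of vectors, is exactly what is needed to drive both $\vert\bs{\alpha}(\yb,\xb)\vert^2$ and $\vert\yb\times\bs{\alpha}(\yb,\xb)\vert^2$ down to $\vert\bs{\alpha}(\xb,\yb)\vert^2$ once the dummy variables are swapped, and that tangency of $\bs{\alpha}$ at its base point is what removes the cross product in the second identity; I expect that to be the only place where care is required rather than the algebra itself.
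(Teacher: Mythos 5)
Your proof is correct and follows essentially the same route as the paper: substitute the explicit adjoints into the nonlocal operators and collapse the integrand using geodesic symmetry, the tangency identity $\vert\xb\times\bs{\alpha}(\xb,\yb)\vert=\vert\bs{\alpha}(\xb,\yb)\vert$, and the antisymmetry of $\bs{\beta}$. The paper only writes out the divergence case and declares the rest ``nearly identical,'' so your more complete treatment (including the correct reading of the last formula with $\Vb(\xb)+\Vb(\yb)$ in place of the paper's two-point notation) is entirely consistent with it.
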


\begin{proof}
The five cases are nearly identical. For the divergence,
\begin{align*}
& \; \DD_S\{\DD_S^*\{u\}\}(\xb)\\
= & \; \int_{\Stwo}\left\{\left[u(\xb)-u(\yb)\right]\bs{\alpha}(\xb,\yb)\cdot\bs{\alpha}(\xb,\yb) - \left[u(\yb)-u(\xb)\right]\bs{\alpha}(\yb,\xb)\cdot\bs{\alpha}(\yb,\xb)\right\}\ud\Omega(\yb),\\
= & \; 2\int_{\Stwo}\left[u(\xb)-u(\yb)\right]\vert\bs{\alpha}(\xb,\yb)\vert^2\ud\Omega(\yb),
\end{align*}
where we have used geodesic symmetry $\vert\bs{\alpha}(\yb,\xb)\vert^2 = \vert\bs{\alpha}(\xb,\yb)\vert^2$. We immediately have that if $2\vert\bs{\alpha}(\xb,\yb)\vert^2=\rho_\delta(\vert\xb-\yb\vert)$ for some $\rho_\delta$, then $\DD_S\{\DD_S^*\}=\CC_S\{\CC_S^*\}=-\mathcal{L}_S^\delta$.
\end{proof}

Classical, local differential calculus deals with operators that maps one-point functions to other one-point functions. To demonstrate that our nonlocal operators really are analogous to the classical differential operators, we will use them to define corresponding nonlocal \textit{weighted} operators that map one-point functions to one-point functions.

\begin{definition}[Weighted nonlocal operators]\label{def:w_nonloc_operators}
Let $u:\Stwo\rightarrow\R$ be a smooth scalar field, $\Vb:\Stwo\rightarrow\R^3$ a smooth vector field, $\delta>0$, and $\omega_\delta:\Stwo\times\Stwo\to\R$ an integrable, non-negative radially symmetric function such that
\begin{align}
\omega_\delta(\xb,\yb) = 0 \quad \mrm{if} \quad \vert\xb-\yb\vert > \delta.
\end{align}
We define the weighted nonlocal surface divergence $\DD_S^\delta\{\Vb\}:\Stwo\to\R$ as the scalar field
\begin{align}\label{eq:w_nonloc_surf_div}
\DD_S^\delta\{\Vb\}(\xb) & = \DD_S\{\omega_\delta\Vb\}(\xb),\\
& = \int_{\Stwo}\left[\Vb(\xb)\cdot\bs{\alpha}(\xb,\yb) - \Vb(\yb)\cdot\bs{\alpha}(\yb,\xb)\right]\omega_\delta(\xb,\yb)\ud\Omega(\yb).\nonumber
\end{align}
We define the weighted nonlocal scalar surface curl $\CC_S^\delta\{\Vb\}:\Stwo\to\R$ as the scalar field
\begin{align}\label{eq:w_nonloc_saclar_surf_curl}
\CC_S^\delta\{\Vb\}(\xb) & = \CC_S\{\omega_\delta\Vb\}(\xb),\\
& = \int_{\Stwo}\left\{\Vb(\yb)\cdot\left[\yb\times\bs{\alpha}(\yb,\xb)\right]-\Vb(\xb)\cdot\left[\xb\times\bs{\alpha}(\xb,\yb)\right]\right\}\omega_\delta(\xb,\yb)\ud\Omega(\yb).\nonumber
\end{align}
We define the weighted nonlocal surface gradient $\bs{\GG}_S^\delta\{u\}:\Stwo\to T\Stwo$ as the tangential vector field
\begin{align}
\bs{\GG}_S^\delta\{u\}(\xb) = \bs{\GG}_S\{\omega_\delta u\}(\xb) = \int_{\Stwo}\left[u(\yb) - u(\xb)\right]\bs{\alpha}(\xb,\yb)\omega_\delta(\xb,\yb)\ud\Omega(\yb).
\label{eq:w_nonloc_surf_grad}
\end{align}
We define the weighted nonlocal vector surface curl $\bs{\CC}_S^\delta\{u\}:\Stwo\to T\Stwo$ as the tangential vector field
\begin{align}
\bs{\CC}_S^\delta\{u\}(\xb) = \bs{\CC}_S\{\omega_\delta u\}(\xb) = \int_{\Stwo}\left[u(\yb) - u(\xb)\right]\xb\times\bs{\alpha}(\xb,\yb)\omega_\delta(\xb,\yb)\ud\Omega(\yb).
\label{eq:w_nonloc_vector_surf_curl}
\end{align}
Finally, we define the weighted nonlocal curl $\bs{\CC}^\delta\{\Vb\}:\Stwo\to\R^3$ as the vector field
\begin{align}
\bs{\CC}^\delta\{\Vb\}(\xb) = \bs{\CC}\{\omega_\delta\Vb\}(\xb) = \int_{\Stwo}\bs{\beta}(\xb,\yb)\times\left[\Vb(\yb) - \Vb(\xb)\right]\omega_\delta(\xb,\yb)\ud\Omega(\yb).
\label{eq:w_nonloc_curl}
\end{align}
\end{definition}

Next, let us determine the adjoints of the weighted nonlocal operators.

\begin{theorem}[Weighted nonlocal adjoints]\label{thm:w_nonloc_ajdoints}
Let $u:\Stwo\rightarrow\R$ be a smooth scalar field and $\Vb:\Stwo\rightarrow\R^3$ a smooth vector field. The adjoint weighted nonlocal surface divergence $(\DD_S^\delta)^*\{u\}:\Stwo\to T\Stwo$ is the tangential vector field
\begin{align*}
(\DD_S^\delta)^*\{u\}(\xb) =\int_{\Stwo}\left[u(\xb)-u(\yb)\right]\bs{\alpha}(\xb,\yb)\omega_\delta(\xb,\yb)\ud\Omega(\yb) = -\bs{\GG}_S^\delta\{u\}(\xb).
\end{align*}
It also satisfies
\begin{align*}
(\DD_S^\delta)^*\{u\}(\xb) = \int_{\Stwo}\DD_S^*\{u\}(\xb,\yb)\omega_\delta(\xb,\yb)\ud\Omega(\yb).
\end{align*}
The adjoint weighted nonlocal scalar surface curl $(\CC_S^\delta)^*\{u\}:\Stwo\to T\Stwo$ is the tangential vector field
\begin{align*}
(\CC_S^\delta)^*\{u\}(\xb) =\int_{\Stwo}\left[u(\yb)-u(\xb)\right]\xb\times\bs{\alpha}(\xb,\yb)\omega_\delta(\xb,\yb)\ud\Omega(\yb) = \bs{\CC}_S^\delta\{u\}(\xb).
\end{align*}
It also satisfies
\begin{align*}
(\CC_S^\delta)^*\{u\}(\xb) = \int_{\Stwo}\CC_S^*\{u\}(\xb,\yb)\omega_\delta(\xb,\yb)\ud\Omega(\yb).
\end{align*}
The adjoint weighted nonlocal surface gradient $(\bs{\GG}_S^\delta)^*\{\Vb\}:\Stwo\to\Stwo$ is the scalar field
\begin{align*}
(\bs{\GG}_S^\delta)^*\{\Vb\}(\xb) & = \int_{\Stwo}\left[\Vb(\yb)\cdot\bs{\alpha}(\yb,\xb)-\Vb(\xb)\cdot\bs{\alpha}(\xb,\yb)\right]\omega_\delta(\xb,\yb)\ud\Omega(\yb),\\
& = -\DD_S^\delta\{\Vb\}(\xb).
\end{align*}
It also satisfies
\begin{align*} 
(\bs{\GG}_S^\delta)^*\{\Vb\}(\xb) = \int_{\Stwo}\bs{\GG}_S^*\{\Vb\}(\xb,\yb)\omega_\delta(\xb,\yb)\ud\Omega(\yb).
\end{align*}
The adjoint weighted nonlocal vector surface curl $(\bs{\CC}_S^\delta)^*\{\Vb\}:\Stwo\to\Stwo$ is the scalar field
\begin{align*}
(\bs{\CC}_S^\delta)^*\{\Vb\}(\xb) & = \int_{\Stwo}\left\{\Vb(\yb)\cdot\left[\yb\times\bs{\alpha}(\yb,\xb)\right]-\Vb(\xb)\cdot\left[\xb\times\bs{\alpha}(\xb,\yb)\right]\right\}\omega_\delta(\xb,\yb)\ud\Omega(\yb),\\
& = \CC_S^\delta\{\Vb\}(\xb).
\end{align*}
It also satisfies
\begin{align*} 
(\bs{\CC}_S^\delta)^*\{\Vb\}(\xb) = \int_{\Stwo}\bs{\CC}_S^*\{\Vb\}(\xb,\yb)\omega_\delta(\xb,\yb)\ud\Omega(\yb).
\end{align*}
Finally, the adjoint weighted nonlocal curl $(\bs{\CC}^\delta)^*\{\Vb\}:\Stwo\to\R^3$ is the vector field
\begin{align*}
(\bs{\CC}^\delta)^*\{\Vb\}(\xb) = \int_{\Stwo}\bs{\beta}(\xb,\yb)\times\left[\Vb(\xb)+\Vb(\yb)\right]\omega_\delta(\xb,\yb)\ud\Omega(\yb).
\end{align*}
which, in general, is not equal to $\bs{\CC}^\delta\{\Vb\}(\xb)$. It also satisfies
\begin{align*}
(\bs{\CC}^\delta)^*\{\Vb\}(\xb) = \int_{\Stwo}\bs{\CC}^*\{\Vb\}(\xb,\yb)\omega_\delta(\xb,\yb)\ud\Omega(\yb).
\end{align*}
\end{theorem}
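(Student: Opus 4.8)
The plan is to exploit the factorization built into Definition \ref{def:w_nonloc_operators}: each weighted operator is the corresponding nonlocal operator of Definition \ref{def:nonloc_operators} precomposed with the multiplication map $u\mapsto\omega_\delta u$ (resp. $\Vb\mapsto\omega_\delta\Vb$), which turns a one-point field into the two-point field $(\omega_\delta u)(\xb,\yb)=\omega_\delta(\xb,\yb)u(\xb)$ (resp. $(\omega_\delta\Vb)(\xb,\yb)=\omega_\delta(\xb,\yb)\Vb(\xb)$). The radial symmetry of $\omega_\delta$, i.e. $\omega_\delta(\xb,\yb)=\omega_\delta(\yb,\xb)$, is exactly what makes this reading consistent with the explicit formulas \eqref{eq:w_nonloc_surf_div}--\eqref{eq:w_nonloc_curl}, and integrability of $\omega_\delta$ together with smoothness (hence boundedness) of $u$, $\Vb$, $\bs{\alpha}$ and $\bs{\beta}$ on the compact set $\Stwo\times\Stwo$ ensures that every integral below converges absolutely, so Fubini's theorem applies freely.

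I would first treat the surface divergence. Using $\DD_S^\delta\{\Vb\}=\DD_S\{\omega_\delta\Vb\}$ and the adjoint identity for $\DD_S$ from Theorem \ref{thm:nonloc_ajdoints},
\begin{align*}
\int_{\Stwo}u(\xb)\DD_S^\delta\{\Vb\}(\xb)\ud\Omega(\xb)
&=\iint_{\Stwo\times\Stwo}\DD_S^*\{u\}(\xb,\yb)\cdot(\omega_\delta\Vb)(\xb,\yb)\ud\Omega(\xb)\ud\Omega(\yb)\\
&=\int_{\Stwo}\left[\int_{\Stwo}\DD_S^*\{u\}(\xb,\yb)\,\omega_\delta(\xb,\yb)\ud\Omega(\yb)\right]\cdot\Vb(\xb)\ud\Omega(\xb),
\end{align*}
which, since the weighted operator maps one-point fields to one-point fields, identifies $(\DD_S^\delta)^*\{u\}(\xb)=\int_{\Stwo}\DD_S^*\{u\}(\xb,\yb)\,\omega_\delta(\xb,\yb)\ud\Omega(\yb)$. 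Substituting the closed form $\DD_S^*\{u\}(\xb,\yb)=[u(\xb)-u(\yb)]\bs{\alpha}(\xb,\yb)$ from \eqref{eq:nonloc_surf_div_adj} yields the stated integral, and comparing with \eqref{eq:w_nonloc_surf_grad} recognizes it as $-\bs{\GG}_S^\delta\{u\}$. The scalar surface curl, surface gradient, and vector surface curl follow the identical template: write the weighted operator as $\DD_S\{\omega_\delta\cdot\}$, $\bs{\GG}_S\{\omega_\delta\cdot\}$, $\bs{\CC}_S\{\omega_\delta\cdot\}$, invoke the matching adjoint from Theorem \ref{thm:nonloc_ajdoints}, peel off the inner $\yb$-integral, insert the closed forms \eqref{eq:nonloc_scal_surf_curl_adj}, \eqref{eq:nonloc_surf_grad_adj}, \eqref{eq:nonloc_vect_surf_curl_adj}, and match against Definition \ref{def:w_nonloc_operators} to recognize $\bs{\CC}_S^\delta$, $-\DD_S^\delta$, and $\CC_S^\delta$ respectively.

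Finally, the curl is the one case where the pairing is vector-to-vector: with $\bs{\CC}^\delta\{\Vb\}=\bs{\CC}\{\omega_\delta\Vb\}$ and the adjoint of $\bs{\CC}$ from Theorem \ref{thm:nonloc_ajdoints}, the same Fubini manipulation gives $(\bs{\CC}^\delta)^*\{\Vb\}(\xb)=\int_{\Stwo}\bs{\CC}^*\{\Vb\}(\xb,\yb)\,\omega_\delta(\xb,\yb)\ud\Omega(\yb)$, and inserting $\bs{\CC}^*\{\Vb\}(\xb,\yb)=\bs{\beta}(\xb,\yb)\times[\Vb(\xb)+\Vb(\yb)]$ from \eqref{eq:nonloc_curl_adj} produces the claimed formula, whose ``$+$'' sign (against the ``$-$'' in \eqref{eq:w_nonloc_curl}) shows it is generically distinct from $\bs{\CC}^\delta\{\Vb\}$. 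There is no genuine analytic difficulty here; the only point I would flag at the outset is the bookkeeping distinction between the two notions of adjoint in play — the weighted operators map one-point fields to one-point fields, whereas Theorem \ref{thm:nonloc_ajdoints} furnishes one-point-to-two-point adjoints — which is reconciled precisely by the inner integration over $\yb$, together with the use of radial symmetry of $\omega_\delta$ to align the two-point field $\omega_\delta\Vb$ with the expressions in \eqref{eq:w_nonloc_surf_div}--\eqref{eq:w_nonloc_curl}.
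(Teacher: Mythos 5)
Your proposal is correct. It differs from the paper's proof mainly in organization: the paper re-derives each weighted adjoint by writing out the double integral for $\int_{\Stwo}u\,\DD_S^\delta\{\Vb\}\ud\Omega$ (etc.) and performing the variable swap $\xb\leftrightarrow\yb$ directly, with the weight $\omega_\delta$ carried along, whereas you treat $\QQ^\delta=\QQ\{\omega_\delta\,\cdot\,\}$ as a composition and invoke the already-proven adjoint identities of Theorem \ref{thm:nonloc_ajdoints} applied to the two-point field $\omega_\delta(\xb,\yb)\Vb(\xb)$, then peel off the inner $\yb$-integral by Fubini. The underlying computation is the same (the swap is just hidden inside Theorem \ref{thm:nonloc_ajdoints}), but your route is more modular: it produces the identities $(\QQ^\delta)^*\{\cdot\}(\xb)=\int_{\Stwo}\QQ^*\{\cdot\}(\xb,\yb)\,\omega_\delta(\xb,\yb)\ud\Omega(\yb)$ as the primary output, from which the explicit formulas and the identifications with $\mp\bs{\GG}_S^\delta$, $\bs{\CC}_S^\delta$, $\mp\DD_S^\delta$, $\CC_S^\delta$ follow by substitution, and it isolates cleanly the two hypotheses actually used — radial symmetry of $\omega_\delta$ (to reconcile the two-point field $\omega_\delta\Vb$ with the explicit formulas of Definition \ref{def:w_nonloc_operators}) and integrability of $\omega_\delta$ against bounded smooth data (to justify Fubini). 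One small caveat worth stating if you write this up: Theorem \ref{thm:nonloc_ajdoints} is phrased for smooth two-point fields, while $\omega_\delta\Vb$ is in general only integrable, so you are implicitly using that the adjoint identities extend to this class; your Fubini remark covers this, but it deserves an explicit sentence.
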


\begin{proof}
We start with the adjoint weighted nonlocal surface divergence and gradient. We have
\begin{align*} 
& \; \int_{\Stwo}u\DD_S^\delta\{\Vb\}\ud\Omega\\
= & \; \iint_{\Stwo\times\Stwo}u(\xb)\left[\Vb(\xb)\cdot\bs{\alpha}(\xb,\yb) - \Vb(\yb)\cdot\bs{\alpha}(\yb,\xb)\right]\omega_\delta(\xb,\yb)\ud\Omega(\yb)\ud\Omega(\xb),\\
= & \; \int_{\Stwo}\left(\int_{\Stwo}\left[u(\xb)-u(\yb)\right]\bs{\alpha}(\xb,\yb)\omega_\delta(\xb,\yb)\ud\Omega(\yb)\right)\cdot\Vb(\xb)\ud\Omega(\xb).
\end{align*}
This yields $(\DD_S^\delta)^*\{u\} = -\bs{\GG}_S^\delta\{u\}$ and
\begin{align*}
(\DD_S^\delta)^*\{u\}(\xb) & = \int_{\Stwo}\left[u(\xb)-u(\yb)\right]\bs{\alpha}(\xb,\yb)\omega_\delta(\xb,\yb)\ud\Omega(\yb),\\
& = \int_{\Stwo}\DD_S^*\{u\}(\xb,\yb)\omega_\delta(\xb,\yb)\ud\Omega(\yb).
\end{align*}

For the adjoint weighted nonlocal scalar and vector surface curl, we have
\begin{align*} 
& \; \int_{\Stwo}\Vb\cdot\bs{\CC}_S^\delta\{u\}\ud\Omega\\
= & \; \int_{\Stwo}\int_{\Stwo}\left\{\Vb(\yb)\cdot\left[\yb\times\bs{\alpha}(\yb,\xb)\right]-\Vb(\xb)\cdot\left[\xb\times\bs{\alpha}(\xb,\yb)\right]\right\}\omega_\delta(\xb,\yb)\ud\Omega(\yb)u(\xb)\ud\Omega(\xb).
\end{align*}
This gives
\begin{align*} 
(\bs{\CC}_S^\delta)^*\{\Vb\}(\xb) = \int_{\Stwo}\left\{\Vb(\yb)\cdot\left[\yb\times\bs{\alpha}(\yb,\xb)\right]-\Vb(\xb)\cdot\left[\xb\times\bs{\alpha}(\xb,\yb)\right]\right\}\omega_\delta(\xb,\yb)\ud\Omega(\yb),
\end{align*}
and a short calculation also shows that $(\bs{\CC}_S^\delta)^*\{\Vb\}(\xb) = \DD_S^\delta\{\xb\times\Vb\}(\xb) = \CC_S^\delta\{\Vb\}(\xb)$.

We finish with the adjoint weighted nonlocal curl,
\begin{align*} 
& \; \int_{\Stwo}\bs{W}\cdot\bs{\CC}^\delta\{\Vb\}\ud\Omega\\
= & \; \iint_{\Stwo\times\Stwo}\bs{W}(\xb)\cdot\left[\bs{\beta}(\xb,\yb)\times\Vb(\yb)-\bs{\beta}(\xb,\yb)\times\Vb(\xb)\right]\omega_\delta(\xb,\yb)\ud\Omega(\yb)\ud\Omega(\xb),\\
= & \; \int_{\Stwo}\left(\int_{\Stwo}\bs{\beta}(\xb,\yb)\times\left[\bs{W}(\xb)+\bs{W}(\yb)\right]\omega_\delta(\xb,\yb)\ud\Omega(\yb)\right)\cdot\Vb(\xb)\ud\Omega(\xb).
\end{align*}
\end{proof}

Moving forward, we will restrict our attention to antisymmetric kernels
\begin{align}
\bs{\beta}(\xb,\yb) = \nabla^{\xb}\gamma(\xb\cdot\yb) - \nabla^{\yb}\gamma(\xb\cdot\yb) = \gamma'(\xb\cdot\yb)[\yb-\xb],
\label{eq:beta}
\end{align}
for some differentiable scalar kernel $\gamma$, and where $'$ denotes the derivative with respect to $t=\xb\cdot\yb$. (As in \eqref{eq:beta}, it is sometimes useful to disambiguate the gradient by superscripting it with the variable on which it acts.) This gives
\begin{align}
\bs{\alpha}(\xb,\yb) = \bs{\beta}(\xb,\yb) - \left[\bs{\beta}(\xb,\yb)\cdot\xb\right]\xb = \gamma'(\xb\cdot\yb)\left[\yb-(\xb\cdot\yb)\xb\right].
\label{eq:alpha}
\end{align}
We also choose $\omega_\delta(\xb,\yb) = \rchi_{[0,\delta]}(\vert\xb-\yb\vert)$, where $\rchi_{[0,\delta]}(\cdot)$ is the indicator function of the interval $[0,\delta]$, and set
\begin{align*}
& \gamma_\delta(\xb\cdot\yb) = \gamma(\xb\cdot\yb)\omega_\delta(\xb,\yb),\\
& \bs{\alpha}_\delta(\xb,\yb) = \gamma'_\delta(\xb\cdot\yb)\left[\yb-(\xb\cdot\yb)\xb\right],\\
& \bs{\beta}_\delta(\xb,\yb) = \gamma'_\delta(\xb\cdot\yb)(\yb-\xb).
\end{align*}
(We recall that the distance $\vert\xb-\yb\vert$ is only a function of the dot product $\xb\cdot\yb$; see Eq.~\eqref{eq:distance}.)

Kernels that depend solely on the dot product $\xb\cdot\yb$ exhibit properties the following properties; see, e.g., Ref.~\citen{brecht2016}.

\begin{lemma}\label{lem:kernel}
For any $\xb,\yb\in\Sph^2$,
\begin{align*}
& \nabla^{\xb}(\xb\cdot\yb) = \yb, && \nabla^{\yb}(\xb\cdot\yb) = \xb,\\
& \nabla^{\xb}_S(\xb\cdot\yb) = \yb - (\xb\cdot\yb)\xb, && \nabla^{\yb}_S(\xb\cdot\yb) = \xb - (\xb\cdot\yb)\yb,\\
& \vert\nabla^{\xb}_S(\xb\cdot\yb)\vert^2 = 1 - (\xb\cdot\yb)^2, && \vert\nabla^{\yb}_S(\xb\cdot\yb)\vert^2 = 1 - (\xb\cdot\yb)^2.
\end{align*}
For any differentiable kernel $f:\Sph^2\times\Sph^2\to\R$ and any $\xb,\yb\in\Sph^2$,
\begin{align*}
& \nabla^{\xb}f(\xb\cdot\yb) = f'(\xb\cdot\yb)\yb, && \nabla^{\yb}f(\xb\cdot\yb) = f'(\xb\cdot\yb)\xb,\\
& \nabla_S^{\xb}f(\xb\cdot\yb) = f'(\xb\cdot\yb)[\yb - (\xb\cdot\yb)\xb], && \nabla_S^{\yb}f(\xb\cdot\yb) = f'(\xb\cdot\yb)[\xb - (\xb\cdot\yb)\yb],\\
& \nabla_S^{\xb}f(\xb\cdot\yb)\cdot\yb = f'(\xb\cdot\yb)[1 - (\xb\cdot\yb)^2], && \nabla_S^{\yb}f(\xb\cdot\yb)\cdot\xb = f'(\xb\cdot\yb)[1 - (\xb\cdot\yb)^2],\\
& \xb\times\nabla_S^{\xb}f(\xb\cdot\yb) = f'(\xb\cdot\yb)\xb\times\yb, && \yb\times\nabla_S^{\yb}f(\xb\cdot\yb) = -f'(\xb\cdot\yb)\xb\times\yb.
\end{align*}
\end{lemma}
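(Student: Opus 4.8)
The plan is to reduce every identity to two elementary facts: the Euclidean gradient of the linear map $\xb \mapsto \xb\cdot\yb$ is the constant vector $\yb$, and the surface gradient at a point $\xb \in \Stwo$ is the orthogonal projection of the ambient gradient onto the tangent plane, that is, $\nabla_S^{\xb} g = \nabla^{\xb} g - (\nabla^{\xb}g\cdot\xb)\xb$ (equivalently, one differentiates along curves lying on $\Stwo$).

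First I would record $\nabla^{\xb}(\xb\cdot\yb) = \yb$ and $\nabla^{\yb}(\xb\cdot\yb) = \xb$ directly from linearity in each slot. Projecting out the normal component then gives $\nabla_S^{\xb}(\xb\cdot\yb) = \yb - (\xb\cdot\yb)\xb$ and, symmetrically, $\nabla_S^{\yb}(\xb\cdot\yb) = \xb - (\xb\cdot\yb)\yb$. Squaring and expanding, and using $\vert\xb\vert = \vert\yb\vert = 1$, yields $\vert\nabla_S^{\xb}(\xb\cdot\yb)\vert^2 = 1 - 2(\xb\cdot\yb)^2 + (\xb\cdot\yb)^2 = 1 - (\xb\cdot\yb)^2$, and likewise at $\yb$.

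Next, for a differentiable scalar function of one variable I would apply the chain rule to obtain $\nabla^{\xb}f(\xb\cdot\yb) = f'(\xb\cdot\yb)\yb$ and $\nabla^{\yb}f(\xb\cdot\yb) = f'(\xb\cdot\yb)\xb$; projecting gives the surface-gradient formulas $\nabla_S^{\xb}f(\xb\cdot\yb) = f'(\xb\cdot\yb)[\yb - (\xb\cdot\yb)\xb]$ and its $\yb$-analogue. The remaining identities follow by pairing these with $\yb$ and $\xb$: the inner products $\nabla_S^{\xb}f(\xb\cdot\yb)\cdot\yb = f'(\xb\cdot\yb)[1 - (\xb\cdot\yb)^2]$ reuse the norm computation above, while the cross products give $\xb\times\nabla_S^{\xb}f(\xb\cdot\yb) = f'(\xb\cdot\yb)\,\xb\times\yb$ since $\xb\times\xb = \bs{0}$, and $\yb\times\nabla_S^{\yb}f(\xb\cdot\yb) = f'(\xb\cdot\yb)\,\yb\times\xb = -f'(\xb\cdot\yb)\,\xb\times\yb$ by antisymmetry of the cross product.

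There is essentially no obstacle; this is a bookkeeping lemma. The only point that warrants care is the consistent use of the definition of the surface gradient as the tangential part of the ambient gradient, together with the standing assumption that $\xb$ and $\yb$ are unit vectors (so $\xb\cdot\xb = \yb\cdot\yb = 1$) throughout the expansions.
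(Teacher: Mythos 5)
Your proof is correct, and there is nothing to compare it against in detail: the paper states this lemma without proof, referring the reader to the literature (Ref.~\citen{brecht2016}). Your argument---linearity of $\xb\mapsto\xb\cdot\yb$, the chain rule, the definition of the surface gradient as the tangential projection $\nabla_S^{\xb} g = \nabla^{\xb} g - (\nabla^{\xb}g\cdot\xb)\xb$, and elementary dot/cross-product algebra with $\vert\xb\vert=\vert\yb\vert=1$---is precisely the standard derivation the authors are implicitly invoking, and all of your computations check out.
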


\section{Diagonalization and nonlocal Stokes theorem}\label{sec:nonloc_stokes}

Any scalar field in $L^2(\Stwo,\R)$ may be expanded in an orthonormal basis of spherical harmonics \cite{atkinson2012},
\begin{align}
u = \sum_{\ell=0}^{+\infty}\sum_{m=-\ell}^{+\ell} u_{\ell,m} Y_{\ell,m},
\label{eq:scalar_spherical_expansion}
\end{align}
for scalar coefficients $u_{\ell,m}$. The spherical harmonics are given by
\begin{align}
Y_{\ell,m} = \dfrac{e^{\ii m\varphi}}{\sqrt{2\pi}} \underbrace{\ii^{m+|m|}\sqrt{\left(\ell+\tfrac{1}{2}\right)\dfrac{(\ell-m)!}{(\ell+m)!}} P_\ell^m(\cos\theta)}_{\tilde{P}_\ell^m(\cos\theta)}, \quad \ell\geq0,\; -\ell\le m\le \ell,
\label{eq:spherical_harmonics}
\end{align}
where the notation $\tilde{P}_\ell^m$ for the associated Legendre polynomials is used to denote orthonormality for fixed $m$ in the $L^2(-1,1)$-sense, and verify 
\begin{align*}
\Delta_SY_{\ell,m}=-\ell(\ell+1)Y_{\ell,m}, \quad \ell\geq0,\; -\ell\le m\le \ell.
\end{align*}
We also define the three vector spherical harmonics \cite{barrera1985}
\begin{align}
\nabla_S Y_{\ell,m},\quad \xb\times\nabla_S Y_{\ell,m}, \quad Y_{\ell,m}\xb.
\label{eq:vector_spherical_harmonics}
\end{align}
Any vector field in $L^2(\Stwo,\R^3)$ may be expanded in an orthogonal basis of vector spherical harmonics
\begin{align}
\Vb = \sum_{\ell=0}^{+\infty}\sum_{m=-\ell}^{+\ell}\Big(V^s_{\ell,m}\nabla_S Y_{\ell,m} + V^t_{\ell,m}\xb\times\nabla_S Y_{\ell,m} + V^x_{\ell,m}Y_{\ell,m}\xb\Big),
\label{eq:vector_spherical_expansion}
\end{align}
for scalar coefficients $V_{\ell,m}^s$, $V_{\ell,m}^t$, and $V^x_{\ell,m}$ representing the spheroidal, toroidal, and normal components, respectively.

In contrast with the decomposition \eqref{eq:spherical} of a vector field using spherical basis vectors, the decomposition \eqref{eq:vector_spherical_expansion} is more elegant in the context of vector calculus, as the following theorem illustrates. 

\begin{theorem}[Diagonalization of local operators]\label{thm:local_operators}
The surface divergence and scalar surface curl satisfy
\begin{align*}
& \DD^0_S\{\nabla_S Y_{\ell,m}\} = -\ell(\ell+1)Y_{\ell,m}, && \CC^0_S\{\nabla_S Y_{\ell,m}\} = 0,\\
& \DD^0_S\{\xb\times\nabla_S Y_{\ell,m}\} = 0, && \CC^0_S\{\xb\times\nabla_S Y_{\ell,m}\} = \ell(\ell+1)Y_{\ell,m},\\
& \DD^0_S\{Y_{\ell,m}\xb\} = 0, && \CC^0_S\{Y_{\ell,m}\xb\} = 0.
\end{align*}
The surface gradient and vector surface curl satisfy by definition
\begin{align*}
& \bs{\GG}^0_S\{Y_{\ell,m}\} = \nabla_S Y_{\ell,m}, && \bs{\CC}^0_S\{Y_{\ell,m}\} = \xb\times\nabla_S Y_{\ell,m}.
\end{align*}
Finally, the curl and its adjoint satisfy
\begin{align*}
& \bs{\CC}^0\{\nabla_S Y_{\ell,m}\} = \xb\times\nabla_SY_{\ell,m},\\
& \bs{\CC}^0\{\xb\times\nabla_S Y_{\ell,m}\} = -\ell(\ell+1)Y_{\ell,m}\xb - \nabla_SY_{\ell,m},\\
& \bs{\CC}^0\{Y_{\ell,m}\xb\} = -\xb\times\nabla_SY_{\ell,m},\\
& (\bs{\CC}^0)^*\{\nabla_S Y_{\ell,m}\} = -\xb\times\nabla_SY_{\ell,m},\\
& (\bs{\CC}^0)^*\{\xb\times\nabla_S Y_{\ell,m}\} = -\ell(\ell+1)Y_{\ell,m}\xb + \nabla_SY_{\ell,m},\\
& (\bs{\CC}^0)^*\{Y_{\ell,m}\xb\} = -\xb\times\nabla_SY_{\ell,m}.
\end{align*}
\end{theorem}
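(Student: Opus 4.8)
The plan is to verify each identity by direct computation, exploiting the fact that all operators involved --- both the differential operators of Definition \ref{def:loc_operators} and the Laplace--Beltrami operator --- are already known to act diagonally (or almost diagonally) on the scalar spherical harmonics, combined with the algebraic identities collected in Lemma \ref{lem:loc_vect_calc}. The key point is that Lemma \ref{lem:loc_vect_calc} expresses every composition $\DD_S^0\{\cdot\}$, $\CC_S^0\{\cdot\}$, $\bs{\CC}^0\{\cdot\}$ applied to $\nabla_S u$, $\xb\times\nabla_S u$, and $u\xb$ in terms of $\Delta_S u$, $\nabla_S u$, $\xb\times\nabla_S u$, and $u\xb$; substituting $u = Y_{\ell,m}$ and using $\Delta_S Y_{\ell,m} = -\ell(\ell+1)Y_{\ell,m}$ then turns each such identity into the stated eigenrelation.

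Concretely, I would proceed in three blocks. First, for the surface divergence and scalar surface curl: the relations $\DD_S^0\{\nabla_S u\} = \LL_S^0\{u\}$, $\DD_S^0\{\xb\times\nabla_S u\} = 0$, $\DD_S^0\{u\xb\} = 0$, $\CC_S^0\{\nabla_S u\} = 0$, $\CC_S^0\{\xb\times\nabla_S u\} = -\LL_S^0\{u\}$ (using $\CC_S^0\{\xb\times\Vb\}$ applied appropriately, or equivalently the last line $\CC_S^0\{\bs{\CC}_S^0\{u\}\}=-\LL_S^0\{u\}$), and $\CC_S^0\{u\xb\} = 0$ from Lemma \ref{lem:loc_vect_calc} give the first six formulas upon setting $u = Y_{\ell,m}$; one should be slightly careful with the sign in the scalar-curl-of-vector-curl line, noting that $\CC_S^0\{\xb\times\nabla_S Y_{\ell,m}\} = +\ell(\ell+1)Y_{\ell,m}$ because $\CC_S^0\{\bs{\CC}_S^0\{Y_{\ell,m}\}\} = -\LL_S^0\{Y_{\ell,m}\} = \ell(\ell+1)Y_{\ell,m}$. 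Second, the two surface-gradient/vector-surface-curl formulas hold by the very definitions $\bs{\GG}_S^0\{u\} = \nabla_S u$ and $\bs{\CC}_S^0\{u\} = \xb\times\nabla_S u$ in Definition \ref{def:loc_operators}, so there is nothing to prove. Third, for the curl $\bs{\CC}^0$ and its adjoint, I would invoke the specialized identities already displayed in Lemma \ref{lem:loc_vect_calc}: $\bs{\CC}^0\{\nabla_S u\} = \bs{\CC}_S^0\{u\} = \xb\times\nabla_S u$, $\bs{\CC}^0\{\xb\times\nabla_S u\} = -\nabla_S u + \LL_S^0\{u\}\xb$, and $\bs{\CC}^0\{u\xb\} = -\bs{\CC}_S^0\{u\} = -\xb\times\nabla_S u$; the adjoint-curl analogues $(\bs{\CC}^0)^*\{\nabla_S u\} = -\bs{\CC}_S^0\{u\}$, $(\bs{\CC}^0)^*\{\xb\times\nabla_S u\} = \nabla_S u + \LL_S^0\{u\}\xb$, $(\bs{\CC}^0)^*\{u\xb\} = -\bs{\CC}_S^0\{u\}$ are the three lines displayed immediately after the proof of Theorem \ref{thm:loc_ajdoints}. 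Again substituting $u = Y_{\ell,m}$ and $\LL_S^0\{Y_{\ell,m}\} = -\ell(\ell+1)Y_{\ell,m}$ yields all six curl formulas.

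Since every ingredient is already available in the excerpt, no step is genuinely hard; the only thing to watch is sign bookkeeping, especially the minus sign in $\Delta_S Y_{\ell,m} = -\ell(\ell+1)Y_{\ell,m}$ propagating correctly through $\CC_S^0\{\xb\times\nabla_S Y_{\ell,m}\}$ and through the $\LL_S^0\{Y_{\ell,m}\}\xb$ terms in the curl and adjoint-curl formulas. One could alternatively give a self-contained derivation of, say, $\DD_S^0\{\nabla_S Y_{\ell,m}\}$ by writing $\nabla_S Y_{\ell,m}$ in the $(\bs{e_\theta},\bs{e_\varphi})$ basis via \eqref{eq:surf_grad} and applying \eqref{eq:surf_div} directly, recovering $\Delta_S Y_{\ell,m}$; but routing everything through Lemma \ref{lem:loc_vect_calc} is cleaner and avoids repeating the computations already encapsulated there.
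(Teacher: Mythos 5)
Your proposal is correct: all the required identities follow by specializing Lemma \ref{lem:loc_vect_calc} and the adjoint-curl identities after Theorem \ref{thm:loc_ajdoints} to $u = Y_{\ell,m}$ with $\Delta_S Y_{\ell,m} = -\ell(\ell+1)Y_{\ell,m}$, and your sign bookkeeping matches the theorem in every line. The paper states this theorem without proof precisely because it is this immediate consequence, so your argument coincides with the intended one.
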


We will show next that the weighted nonlocal operators may be reduced to diagonal scalings of their local counterparts. For any $f\in L^1(-1,1)$, define
\begin{align*}
\lambda_\ell\{f\} = 2\pi\int_{-1}^1P_\ell(t)f(t)\ud t, \quad \ell\ge0.
\end{align*}
We have the following \textit{localization} properties \cite{brecht2016}.

\begin{lemma}[Localization]\label{lem:localization}
Let $f\in L^1(-1,1)$ with $f'\in L^1(-1,1)$. Then for any $\xb\in\Stwo$
\begin{align*}
& \int_{\Sph^2}f'(\xb\cdot\yb)\yb\ud\Omega(\yb) = \lambda_0\{tf'\}\xb,\\
& \int_{\Sph^2}f'(\xb\cdot\yb)Y_{\ell,m}(\yb)\ud\Omega(\yb) = \lambda_\ell\{f'\}Y_{\ell,m}(\xb),\\
& \int_{\Sph^2}f'(\xb\cdot\yb)\Delta_S Y_{\ell,m}(\yb)\ud\Omega(\yb) = \lambda_\ell\{f'\}\Delta_S Y_{\ell,m}(\xb),\\
& \int_{\Sph^2}f'(\xb\cdot\yb)\nabla_SY_{\ell,m}(\yb)\ud\Omega(\yb) = \ell(\ell+1)\lambda_\ell\{f\}Y_{\ell,m}(\xb)\xb + \lambda_\ell\{f + tf'\}\nabla_S Y_{\ell,m}(\xb),\\
& \int_{\Sph^2}f'(\xb\cdot\yb)\yb\times\nabla_S Y_{\ell,m}(\yb)\ud\Omega(\yb) = \lambda_\ell\{f'\}\xb\times\nabla_S Y_{\ell,m}(\xb),\\
& \int_{\Sph^2}f'(\xb\cdot\yb)Y_{\ell,m}(\yb)\yb\ud\Omega(\yb) = \lambda_\ell\{tf'\}Y_{\ell,m}(\xb)\xb + \lambda_\ell\{f\}\nabla_S Y_{\ell,m}(\xb).
\end{align*}
\end{lemma}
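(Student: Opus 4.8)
The plan is to reduce all six identities to the Funk--Hecke theorem \cite{atkinson2012} together with the chain‑rule formulas of Lemma~\ref{lem:kernel}. Recall Funk--Hecke: if $Z_\ell$ is the restriction to $\Stwo$ of a harmonic homogeneous polynomial of degree $\ell$ and $g\in L^1(-1,1)$, then
\[
\int_{\Stwo}g(\xb\cdot\yb)Z_\ell(\yb)\ud\Omega(\yb) = \lambda_\ell\{g\}\,Z_\ell(\xb),
\]
the case $\ell=0$ reading $\int_{\Stwo}g(\xb\cdot\yb)\ud\Omega(\yb)=\lambda_0\{g\}=2\pi\int_{-1}^1 g(t)\ud t$. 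The second identity of the lemma is then the case $g=f'$, $Z_\ell=Y_{\ell,m}$; the third follows by multiplying the second by $-\ell(\ell+1)$ and using $\Delta_SY_{\ell,m}=-\ell(\ell+1)Y_{\ell,m}$; and the first is Funk--Hecke applied componentwise to the three degree‑one harmonics $\yb\mapsto y_j$, together with the elementary equality $\lambda_1\{f'\}=2\pi\int_{-1}^1 tf'(t)\ud t=\lambda_0\{tf'\}$ (since $P_1(t)=t$).

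For the sixth identity I would differentiate the Funk--Hecke relation $\int_{\Stwo}f(\xb\cdot\yb)Y_{\ell,m}(\yb)\ud\Omega(\yb)=\lambda_\ell\{f\}Y_{\ell,m}(\xb)$ in $\xb$ with the surface gradient. Differentiating under the integral sign (justified for $f,f'\in L^1$ by a density argument from smooth $f$) and using $\nabla_S^{\xb}f(\xb\cdot\yb)=f'(\xb\cdot\yb)[\yb-(\xb\cdot\yb)\xb]$ from Lemma~\ref{lem:kernel}, one obtains
\[
\int_{\Stwo}f'(\xb\cdot\yb)Y_{\ell,m}(\yb)\yb\ud\Omega(\yb)-\xb\int_{\Stwo}(\xb\cdot\yb)f'(\xb\cdot\yb)Y_{\ell,m}(\yb)\ud\Omega(\yb)=\lambda_\ell\{f\}\nabla_SY_{\ell,m}(\xb),
\]
and the second integral equals $\lambda_\ell\{tf'\}Y_{\ell,m}(\xb)$ by another application of Funk--Hecke with $g(t)=tf'(t)$; rearranging gives the sixth identity. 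For the fifth identity I would note that $\yb\times\nabla_S^{\yb}Y_{\ell,m}(\yb)=\yb\times\nabla^{\yb}Y_{\ell,m}(\yb)$ on $\Stwo$ (the normal part of $\nabla^{\yb}Y_{\ell,m}$ is annihilated by $\yb\times$), and that the Cartesian components of $\yb\times\nabla^{\yb}Y_{\ell,m}$ are again harmonic homogeneous polynomials of degree $\ell$ --- the operator $\yb\times\nabla^{\yb}$ generates rotations and so preserves the space of degree‑$\ell$ harmonics. Funk--Hecke applied componentwise with $g=f'$ then yields $\lambda_\ell\{f'\}\,\xb\times\nabla_SY_{\ell,m}(\xb)$, which is the fifth identity.

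The fourth identity is the one requiring genuine work, because naive Funk--Hecke produces a degree shift. Writing $Y_{\ell,m}$ also for its harmonic homogeneous extension, Euler's identity gives $\nabla^{\yb}Y_{\ell,m}(\yb)=\nabla_S^{\yb}Y_{\ell,m}(\yb)+\ell Y_{\ell,m}(\yb)\yb$ on $\Stwo$, i.e. $\nabla_S^{\yb}Y_{\ell,m}(\yb)=\nabla^{\yb}Y_{\ell,m}(\yb)-\ell Y_{\ell,m}(\yb)\yb$. The components of $\nabla^{\yb}Y_{\ell,m}$ are harmonic homogeneous polynomials of degree $\ell-1$, so Funk--Hecke (componentwise, $g=f'$) gives
\[
\int_{\Stwo}f'(\xb\cdot\yb)\nabla^{\yb}Y_{\ell,m}(\yb)\ud\Omega(\yb)=\lambda_{\ell-1}\{f'\}\nabla Y_{\ell,m}(\xb)=\lambda_{\ell-1}\{f'\}\bigl[\nabla_SY_{\ell,m}(\xb)+\ell Y_{\ell,m}(\xb)\xb\bigr],
\]
while $-\ell\int_{\Stwo}f'(\xb\cdot\yb)Y_{\ell,m}(\yb)\yb\ud\Omega(\yb)$ is evaluated by the sixth identity just proved. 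Collecting the coefficients of $\nabla_SY_{\ell,m}(\xb)$ and of $Y_{\ell,m}(\xb)\xb$, the claimed formula follows once one knows the scalar identity
\[
\lambda_{\ell-1}\{f'\}=(\ell+1)\lambda_\ell\{f\}+\lambda_\ell\{tf'\}.
\]
This I would establish by integrating by parts in both $\lambda_{\ell-1}\{f'\}=2\pi\int_{-1}^1P_{\ell-1}f'$ and $\lambda_\ell\{tf'\}=2\pi\int_{-1}^1 tP_\ell f'$ (valid since $f'\in L^1$ makes $f$ absolutely continuous on $[-1,1]$); the boundary terms cancel because $P_\ell(1)=1$ and $P_\ell(-1)=(-1)^\ell$, and what remains reduces, via the classical Legendre recurrence $tP_\ell'(t)-P_{\ell-1}'(t)=\ell P_\ell(t)$, to $(\ell+1)\lambda_\ell\{f\}$. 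The main obstacle is precisely this last bookkeeping --- matching the $\lambda$‑combinations and absorbing the degree shift $\ell-1\mapsto\ell$ --- which rests entirely on that Legendre recurrence and on the cancellation of the endpoint contributions.
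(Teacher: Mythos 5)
The paper does not actually prove Lemma~\ref{lem:localization}; it imports it wholesale from Ref.~\citen{brecht2016}, so there is no in-paper argument to compare against. Your Funk--Hecke derivation is correct and self-contained. The two nontrivial points both check out: (i) the coefficient matching in the fourth identity reduces, for both the $\nabla_SY_{\ell,m}(\xb)$ and the $Y_{\ell,m}(\xb)\xb$ components, to the single scalar identity $\lambda_{\ell-1}\{f'\}=(\ell+1)\lambda_\ell\{f\}+\lambda_\ell\{tf'\}$, and (ii) that identity follows exactly as you say: the endpoint terms from the two integrations by parts coincide because $P_{\ell-1}(1)=1\cdot P_\ell(1)$ and $P_{\ell-1}(-1)=(-1)\cdot P_\ell(-1)\cdot(-1)^{\,2\ell-1}\cdot(-1)^{0}$, i.e.\ $f(1)-(-1)^{\ell-1}f(-1)$ in both cases, and the remaining integrand $P_\ell+tP_\ell'-P_{\ell-1}'=(\ell+1)P_\ell$ by the standard derivative recurrence. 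The remaining ingredients --- Euler's identity for the harmonic extension, preservation of degree-$\ell$ harmonics by the angular-momentum operator $\yb\times\nabla$, the degree drop under $\nabla$, and the density argument justifying differentiation under the integral for $f,f'\in L^1$ --- are all used correctly, and the degenerate case $\ell=0$ of the fourth and fifth identities is trivially $0=0$. This is a complete proof of a result the paper leaves as a citation.
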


We prove the following lemma, which be useful for what follows.

\begin{lemma}
Let $f\in L^1(-1,1)$ such that $\sqrt{1-t^2}f'\in L^1(-1,1)$. Then for any $\xb\in\Stwo$
\begin{align}
\int_{\Sph^2}\nabla^{\yb}_S f(\xb\cdot\yb)\times\nabla_SY_{\ell,m}(\yb)\ud\Omega(\yb) = & \; \lambda_\ell\{f\}\xb\times\nabla_SY_{\ell,m}(\xb),\nonumber\\ 
= & \; \lambda_\ell\{f\}\nabla\times(\nabla_SY_{\ell,m}(\xb)),\label{eq:12}\\
\int_{\Sph^2}\nabla^{\xb}_S f(\xb\cdot\yb)\times(\yb\times\nabla_S Y_{\ell,m}(\yb))\ud\Omega(\yb) = & \; - \lambda_\ell\{f\}\nabla_SY_{\ell,m}(\xb)\nonumber\\
& \;- \ell(\ell+1)\lambda_\ell\{f\}Y_{\ell,m}(\xb)\xb,\nonumber\\
= & \; \lambda_\ell\{f\}\nabla\times(\xb\times\nabla_SY_{\ell,m}(\xb)),\label{eq:13}\\
\int_{\Sph^2}\nabla^{\yb}_S f(\xb\cdot\yb)\times(Y_{\ell,m}(\yb)\yb)\ud\Omega(\yb) = & \; \lambda_\ell\{f\}\xb\times\nabla_SY_{\ell,m}(\xb),\nonumber\\
= & \; -\lambda_\ell\{f\}\nabla\times(Y_{\ell,m}\xb).\label{eq:14}
\end{align}
\end{lemma}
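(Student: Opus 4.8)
The plan is to reduce all three left-hand sides, by transferring the derivative off the factor $f(\xb\cdot\yb)$, to the single evaluation
\[
\int_{\Sph^2} f(\xb\cdot\yb)\,\yb\times\nabla_S Y_{\ell,m}(\yb)\,\ud\Omega(\yb)=\lambda_\ell\{f\}\,\xb\times\nabla_S Y_{\ell,m}(\xb),
\]
which is Lemma~\ref{lem:localization} with its kernel ``$f'$'' taken to be the present $f$ (equivalently, Lemma~\ref{lem:localization} applied to an antiderivative of $f$). The three identities then differ only in how that transfer is carried out and in a closing appeal to the local vector calculus of Lemma~\ref{lem:loc_vect_calc}. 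Throughout, Lemma~\ref{lem:kernel} gives $\vert\nabla_S^{\xb}f(\xb\cdot\yb)\vert=\vert\nabla_S^{\yb}f(\xb\cdot\yb)\vert=\vert f'(\xb\cdot\yb)\vert\sqrt{1-(\xb\cdot\yb)^2}$, so the hypothesis $\sqrt{1-t^2}\,f'\in L^1(-1,1)$ is exactly what makes each left-hand integrand absolutely integrable; after the transfer only $\lambda_\ell\{f\}$ with $f\in L^1$ survives, and every quantity is finite. The differentiation-under-the-integral and integration-by-parts steps below are first justified for $f\in C^1$ and then extended by density.

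For \eqref{eq:13} the gradient acts on the free variable $\xb$. Writing $\nabla_S^{\xb}f(\xb\cdot\yb)=\nabla_S^{\xb}\bigl[f(\xb\cdot\yb)\bigr]$ and using that $\yb\times\nabla_S Y_{\ell,m}(\yb)$ is independent of $\xb$, a componentwise check (via the $0$-homogeneous extension in $\xb$, for which the classical $\nabla\times$ agrees with the curl of Definition~\ref{def:loc_operators}) shows that crossing with this $\xb$-independent tangential field and integrating over $\yb$ commutes with the $\xb$-curl:
\[
\int_{\Sph^2}\nabla_S^{\xb}f(\xb\cdot\yb)\times\bigl(\yb\times\nabla_S Y_{\ell,m}(\yb)\bigr)\,\ud\Omega(\yb)=\nabla\times\Bigl(\int_{\Sph^2}f(\xb\cdot\yb)\,\yb\times\nabla_S Y_{\ell,m}(\yb)\,\ud\Omega(\yb)\Bigr).
\]
By the first display the inner integral equals $\lambda_\ell\{f\}\,\xb\times\nabla_S Y_{\ell,m}(\xb)$, so \eqref{eq:13} equals $\lambda_\ell\{f\}\,\nabla\times(\xb\times\nabla_S Y_{\ell,m}(\xb))$, its second line; the first line then follows from $\nabla\times(\xb\times\nabla_S Y_{\ell,m})=-\nabla_S Y_{\ell,m}+\Delta_S Y_{\ell,m}\,\xb=-\nabla_S Y_{\ell,m}-\ell(\ell+1)Y_{\ell,m}\xb$ in Lemma~\ref{lem:loc_vect_calc}.

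For \eqref{eq:12} and \eqref{eq:14} the gradient acts on the integration variable $\yb$, so I would instead integrate by parts on $\Sph^2$ (which has no boundary). Using the triple-product identities \eqref{eq:saclar-triple-prod}--\eqref{eq:vector-triple-prod} together with $\yb\cdot\yb=1$ and $\yb\cdot\nabla_S Y_{\ell,m}(\yb)=0$, the integrand of \eqref{eq:12} simplifies to $-\bigl(\nabla_S^{\yb}f(\xb\cdot\yb)\cdot(\yb\times\nabla_S Y_{\ell,m}(\yb))\bigr)\yb$ and that of \eqref{eq:14} to $-Y_{\ell,m}(\yb)\bigl(\yb\times\nabla_S^{\yb}f(\xb\cdot\yb)\bigr)$. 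Pairing with $\bs{e}_i$ componentwise, one transfers the $\yb$-derivative off $f(\xb\cdot\yb)$ using the adjoint relations $(\bs{\GG}_S^0)^*=-\DD_S^0$ and $(\bs{\CC}_S^0)^*=\CC_S^0$ from Theorem~\ref{thm:loc_ajdoints}, the product rule \eqref{eq:div-prod} (and its $\CC_S^0$-analogue coming from $\CC_S^0\{\Vb\}=\DD_S^0\{\xb\times\Vb\}$), the vanishing identities $\DD_S^0\{\xb\times\nabla_S u\}=\CC_S^0\{\nabla_S u\}=0$ of Lemma~\ref{lem:loc_vect_calc}, and $\nabla_S^{\yb}y_i=\bs{e}_i-y_i\yb$; both cases land on $\int_{\Sph^2}f(\xb\cdot\yb)\,\yb\times\nabla_S Y_{\ell,m}(\yb)\,\ud\Omega(\yb)$, which yields the first lines of \eqref{eq:12} and \eqref{eq:14}, while their second lines are $\nabla\times(\nabla_S Y_{\ell,m})=\xb\times\nabla_S Y_{\ell,m}$ and $\nabla\times(Y_{\ell,m}\xb)=-\xb\times\nabla_S Y_{\ell,m}$ from Lemma~\ref{lem:loc_vect_calc}. (Alternatively, for \eqref{eq:14} one writes the integrand as $Y_{\ell,m}(\yb)\,\xb\times\nabla_S^{\xb}f(\xb\cdot\yb)$, pulls the $\xb$-vector surface curl out of the $\yb$-integral as in \eqref{eq:13}, and applies the scalar localization $\int_{\Sph^2}f(\xb\cdot\yb)Y_{\ell,m}(\yb)\,\ud\Omega=\lambda_\ell\{f\}Y_{\ell,m}(\xb)$.)

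The step I expect to demand the most care is this transfer of the derivative: one must not split the integrand into pieces such as $f'(\xb\cdot\yb)\,\xb\times\nabla_S Y_{\ell,m}(\yb)$ and $-(\xb\cdot\yb)f'(\xb\cdot\yb)\,\yb\times\nabla_S Y_{\ell,m}(\yb)$, each of which may fail to be integrable when only $\sqrt{1-t^2}\,f'\in L^1$, but must perform the integration by parts — or commute the curl with the integral — on the whole integrand at once; and in \eqref{eq:13} one must verify that the $\xb$-curl genuinely commutes with $\int\ud\Omega(\yb)$ and coincides with the curl of Definition~\ref{def:loc_operators}, which is precisely where the $0$-homogeneous extension of a tangential field enters.
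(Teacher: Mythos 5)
Your proposal is correct, and for \eqref{eq:13} and \eqref{eq:14} it is essentially the paper's argument: the paper also obtains \eqref{eq:14} by moving the surface-curl derivative off $f(\xb\cdot\yb)$ via integration by parts and then invoking the localization identity $\int_{\Sph^2}f(\xb\cdot\yb)\,\yb\times\nabla_S Y_{\ell,m}(\yb)\ud\Omega(\yb)=\lambda_\ell\{f\}\,\xb\times\nabla_SY_{\ell,m}(\xb)$, and it obtains \eqref{eq:13} by exchanging the $\xb$-curl with the $\yb$-integral through the product rule \eqref{eq:cross-prod} (the paper goes from right to left, you go from left to right; same ingredients). The genuine difference is \eqref{eq:12}. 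The paper proves it in three steps: the same pointwise reduction of the integrand to $-[\nabla_S^{\yb}f\cdot(\yb\times\nabla_SY_{\ell,m}(\yb))]\yb$, then a rewriting in terms of $\nabla_S^{\xb}f$ using triple-product identities, and finally a comparison with \eqref{eq:13} crossed with $-\xb$; so in the paper \eqref{eq:12} depends on \eqref{eq:13}. You instead finish directly: pairing with $\bs{e}_i$, using $\nabla_S^{\yb}y_i=\bs{e}_i-y_i\yb$, the product rule, $\CC_S^0\{\nabla_S u\}=0$, and the adjoint relations of Theorem \ref{thm:loc_ajdoints}, the componentwise integration by parts lands on the same localization integral without any detour through \eqref{eq:13}; I checked that this computation closes (the $y_i\yb$ part of $\nabla_S^{\yb}y_i$ drops against $\yb\cdot(\yb\times\cdot)=0$). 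This buys a shorter, more self-contained proof of the hardest identity and avoids the paper's slightly delicate triple-product manipulation in its second step; what you lose is the structural reuse of \eqref{eq:13} that the paper exploits. Your added care about integrability ($\vert\nabla_S^{\yb}f\vert=\vert f'\vert\sqrt{1-t^2}$, performing the transfer on the whole integrand rather than on non-integrable pieces, smooth-then-density) is sound and in fact more explicit than the paper, which argues formally at this point.
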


\begin{proof}
The last equality in each equation follows from Theorem \ref{thm:local_operators}. We prove the first equalities, starting with \eqref{eq:14},
\begin{align}
\int_{\Sph^2}\nabla^{\yb}_S f(\xb\cdot\yb)\times(Y_{\ell,m}(\yb)\yb)\ud\Omega(\yb) & = -\int_{\Sph^2}\yb\times\nabla^{\yb}_S f(\xb\cdot\yb)Y_{\ell,m}(\yb)\ud\Omega(\yb),\nonumber\\ 
& = \int_{\Sph^2} f(\xb\cdot\yb)\yb\times\nabla^{\yb}_S Y_{\ell,m}(\yb)\ud\Omega(\yb),
\end{align}
by integration by parts. We then use localization from Lemma \ref{lem:localization} for $\yb\times\nabla_SY_{\ell,m}(\yb)$.

For \eqref{eq:13}, we start with the right-hand side and use \eqref{eq:14},
\begin{align*}
\lambda_\ell\{f\}\nabla\times(\xb\times\nabla_SY_{\ell,m}(\xb)) & = \nabla^{\xb}\times\left(\int_{\Sph^2}\nabla^{\yb}_S f(\xb\cdot\yb)\times(Y_{\ell,m}(\yb)\yb)\ud\Omega(\yb)\right),\\
& = \nabla^{\xb}\times\left(\int_{\Sph^2} f(\xb\cdot\yb)\yb\times\nabla^{\yb}_S Y_{\ell,m}(\yb)\ud\Omega(\yb)\right).
\end{align*}
We insert the curl inside and then use \eqref{eq:cross-prod},
\begin{align}
& \; \nabla^{\xb}\times\left(\int_{\Sph^2} f(\xb\cdot\yb)\yb\times\nabla^{\yb}_S Y_{\ell,m}(\yb)\ud\Omega(\yb)\right)\nonumber\\ 
= & \; \int_{\Sph^2} \nabla^{\xb}_Sf(\xb\cdot\yb)\times(\yb\times\nabla^{\yb}_S Y_{\ell,m}(\yb))\ud\Omega(\yb).
\end{align}

The proof of \eqref{eq:12} is more involved; we do it in three steps. First, we show that 
\begin{align}
& \; \int_{\Sph^2}\nabla^{\yb}_S f(\xb\cdot\yb)\times\nabla_SY_{\ell,m}(\yb)\ud\Omega(\yb)\nonumber\\ 
= & \; \int_{\Sph^2}(\yb\times\nabla^{\yb}_S f(\xb\cdot\yb))\times(\yb\times\nabla_SY_{\ell,m}(\yb))\ud\Omega(\yb).
\end{align}
Second, we show that
\begin{align}
& \; \int_{\Sph^2}(\yb\times\nabla^{\yb}_S f(\xb\cdot\yb))\times(\yb\times\nabla_SY_{\ell,m}(\yb))\ud\Omega(\yb)\nonumber\\ 
= & \; -\int_{\Sph^2}\left[\xb\cdot(\yb\times\nabla_S Y_{\ell,m}(\yb))\right]\nabla_S^{\xb} f(\xb\cdot\yb)\ud\Omega(\yb).
\end{align}
Third, we show that we also have
\begin{align}
& \; \lambda_\ell\{f\}\xb\times\nabla_SY_{\ell,m}(\xb)\nonumber\\ 
= & \; -\int_{\Sph^2}\left[\xb\cdot(\yb\times\nabla_S Y_{\ell,m}(\yb))\right]\nabla_S^{\xb} f(\xb\cdot\yb)\ud\Omega(\yb).
\end{align}
For the first step, we utilize \eqref{eq:cross-prod},
\begin{align*}
\nabla^{\yb}\times(f(\xb\cdot\yb)\nabla_SY_{\ell,m}(\yb)) = \nabla_S f(\xb\cdot\yb)\times\nabla_SY_{\ell,m}(\yb) + f(\xb\cdot\yb)\yb\times\nabla_SY_{\ell,m}(\yb),
\end{align*}
and \eqref{eq:cross},
\begin{align}
\nabla^{\yb}\times(f(\xb\cdot\yb)\nabla_SY_{\ell,m}(\yb)) = & \; \yb\times f(\xb\cdot\yb)\nabla_SY_{\ell,m}(\yb)\nonumber\\ 
& \; - [\nabla^{\yb}_S\cdot(\yb\times f(\xb\cdot\yb)\nabla_SY_{\ell,m}(\yb))]\yb,
\end{align}
which yields, via \eqref{eq:div-prod},
\begin{align}
\nabla^{\yb}_S f(\xb\cdot\yb)\times\nabla_SY_{\ell,m}(\yb) & = -[\nabla^{\yb}_S\cdot(\yb\times f(\xb\cdot\yb)\nabla_SY_{\ell,m}(\yb))]\yb,\nonumber\\ 
& = -[\nabla^{\yb}_S f(\xb\cdot\yb)\cdot(\yb\times\nabla_SY_{\ell,m}(\yb))]\yb.
\end{align}
We conclude with \eqref{eq:vector-triple-prod-2},
\begin{align}
-[\nabla^{\yb}_S f(\xb\cdot\yb)\cdot(\yb\times\nabla_SY_{\ell,m}(\yb))]\yb = (\yb\times\nabla^{\yb}_S f(\xb\cdot\yb))\times(\yb\times\nabla_SY_{\ell,m}(\yb)).
\end{align}
For the second step, we write
\begin{align}
& \; \int_{\Sph^2}(\yb\times\nabla^{\yb}_S f(\xb\cdot\yb))\times(\yb\times\nabla_SY_{\ell,m}(\yb))\ud\Omega(\yb)\nonumber\\ 
= & \; -\int_{\Sph^2}(\yb\times\nabla_SY_{\ell,m}(\yb))\times(\nabla^{\xb}_S f(\xb\cdot\yb)\times\xb)\ud\Omega(\yb),
\end{align}
and utilize \eqref{eq:vector-triple-prod} to obtain
\begin{align}
& \; -\int_{\Sph^2}(\yb\times\nabla_SY_{\ell,m}(\yb))\times(\nabla^{\xb}_S f(\xb\cdot\yb)\times\xb)\ud\Omega(\yb)\nonumber\\ 
= & \; -\int_{\Sph^2}\left[\xb\cdot(\yb\times\nabla_S Y_{\ell,m}(\yb))\right]\nabla_S^{\xb} f(\xb\cdot\yb)\ud\Omega(\yb).
\end{align}
For the third step, we cross \eqref{eq:13} by $-\xb$,
\begin{align*}
\lambda_\ell\{f\}\xb\times\nabla_SY_{\ell,m}(\xb) = -\xb\times\left(\int_{\Sph^2}\nabla^{\xb}_S f(\xb\cdot\yb)\times(\yb\times\nabla_S Y_{\ell,m}(\yb))\ud\Omega(\yb)\right),
\end{align*}
and conclude with \eqref{eq:vector-triple-prod}.
\end{proof}

We now prove the following \textit{generalized} localization properties.

\begin{lemma}[Generalized localization]\label{lem:gen_localization}
Let $f\in L^1(-1,1)$ such that $\sqrt{1-t^2}f'\in L^1(-1,1)$. Then for any $\xb\in\Stwo$
\begin{align}
& \; \int_{\Stwo}f'(\xb\cdot\yb)(\yb-\xb)Y_{\ell,m}(\yb)\ud\Omega(\yb)\nonumber\\
= & \; \lambda_\ell\{(t-1)f'\}Y_{\ell,m}(\xb)\xb + \lambda_\ell\{f\}\nabla_S Y_{\ell,m}(\xb),\label{eq:Ylm}\\
& \; \int_{\Stwo}f'(\xb\cdot\yb)(\yb-\xb)\times\nabla_SY_{\ell,m}(\yb)\ud\Omega(\yb)\nonumber\\
= & \; \lambda_\ell\{(1-t)f' - f\}\xb\times\nabla_SY_{\ell,m}(\xb),\label{eq:Glm}\\
& \; \int_{\Stwo}f'(\xb\cdot\yb)(\yb-\xb)\times(\yb\times\nabla_SY_{\ell,m}(\yb))\ud\Omega(\yb)\nonumber\\
= & \; \lambda_\ell\{f\}\Delta_S Y_{\ell,m}(\xb)\xb + \lambda_\ell\{(1-t)f' - f\}\nabla_SY_{\ell,m}(\xb),\label{eq:Clm}\\
& \; \int_{\Stwo}f'(\xb\cdot\yb)(\yb-\xb)\times(Y_{\ell,m}(\yb)\yb)\ud\Omega(\yb) = -\lambda_\ell\{f\}\xb\times\nabla_SY_{\ell,m}(\xb).\label{eq:Xlm}
\end{align}
\end{lemma}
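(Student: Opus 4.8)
The plan is to derive each of the four identities \eqref{eq:Ylm}--\eqref{eq:Xlm} by writing $\yb - \xb = \nabla^{\yb}_S(\xb\cdot\yb)/\,\cdot\,$ only where convenient and otherwise reducing to the already-established localization results of Lemma~\ref{lem:localization} and the preceding lemma. Note that $f'(\xb\cdot\yb)(\yb - \xb) = f'(\xb\cdot\yb)\yb - f'(\xb\cdot\yb)\xb$, so whenever the integrand is multiplied by a scalar-harmonic factor I can split the integral into a ``$\yb$''-part handled by Lemma~\ref{lem:localization} and an ``$\xb$''-part which is $\xb$ times the scalar localization integral; this is exactly the mechanism behind \eqref{eq:Ylm}. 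Concretely, for \eqref{eq:Ylm} I would write
\begin{align*}
\int_{\Stwo}f'(\xb\cdot\yb)(\yb-\xb)Y_{\ell,m}(\yb)\ud\Omega(\yb) = \int_{\Stwo}f'(\xb\cdot\yb)Y_{\ell,m}(\yb)\yb\ud\Omega(\yb) - \xb\int_{\Stwo}f'(\xb\cdot\yb)Y_{\ell,m}(\yb)\ud\Omega(\yb),
\end{align*}
and invoke the last identity of Lemma~\ref{lem:localization} together with its second identity. The first gives $\lambda_\ell\{tf'\}Y_{\ell,m}(\xb)\xb + \lambda_\ell\{f\}\nabla_S Y_{\ell,m}(\xb)$, the second gives $\lambda_\ell\{f'\}Y_{\ell,m}(\xb)\xb$; subtracting and using linearity of $f\mapsto\lambda_\ell\{f\}$ yields the coefficient $\lambda_\ell\{tf' - f'\} = \lambda_\ell\{(t-1)f'\}$ on the normal part, as claimed.

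For \eqref{eq:Glm} and \eqref{eq:Xlm}, the cross product $(\yb - \xb)\times(\cdot)$ does not split so cleanly because $\yb\times(\yb\times\nabla_S Y_{\ell,m})$ is not directly tabulated, so instead I would rewrite $\yb - \xb$ in terms of surface gradients: since $\nabla^{\yb}_S(\xb\cdot\yb) = \xb - (\xb\cdot\yb)\yb$ by Lemma~\ref{lem:kernel}, one has $(\yb - \xb) = -\nabla^{\yb}_S(\xb\cdot\yb) + (1 - \xb\cdot\yb)\yb$ restricted to the relevant components, or more directly I would use that $f'(\xb\cdot\yb)(\yb - \xb)$ differs from $\nabla^{\yb}_S f(\xb\cdot\yb) = f'(\xb\cdot\yb)[\xb - (\xb\cdot\yb)\yb]$ in a controlled way; comparing, $f'(\xb\cdot\yb)(\yb - \xb) = -\nabla^{\yb}_S f(\xb\cdot\yb) + f'(\xb\cdot\yb)(1-\xb\cdot\yb)\yb - f'(\xb\cdot\yb)(\xb - \yb) \cdot(\text{normal correction})$ — the cleanest route is to observe that crossing with a vector tangent at $\yb$ kills any multiple of $\yb$, so for \eqref{eq:Glm} and \eqref{eq:Xlm} one may replace $f'(\xb\cdot\yb)(\yb-\xb)$ by $-\nabla^{\yb}_S f(\xb\cdot\yb) $ plus a multiple of $\yb$ that disappears in the cross product with $\nabla_S Y_{\ell,m}(\yb)$ (resp. $Y_{\ell,m}(\yb)\yb$), and then apply \eqref{eq:12} and \eqref{eq:14} of the previous lemma directly. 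The bookkeeping of which scalar multiple of $\yb$ appears, and verifying it really is annihilated, is the step to be careful with; I expect the coefficient $\lambda_\ell\{(1-t)f' - f\}$ in \eqref{eq:Glm} to arise from combining the $\lambda_\ell\{f\}$ from \eqref{eq:12} with a $\lambda_\ell\{(1-t)f'\}$-type term coming from the leftover $\yb$-multiple after one also accounts for $\xb\times\nabla_S Y_{\ell,m}$.

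For \eqref{eq:Clm}, which produces the Laplace--Beltrami eigenfactor, I would again split $(\yb - \xb)\times(\yb\times\nabla_S Y_{\ell,m}(\yb))$ into a $\yb$-term and an $\xb$-term: the $\yb\times(\yb\times\nabla_S Y_{\ell,m}(\yb))$ piece equals $-\nabla_S Y_{\ell,m}(\yb)$ by \eqref{eq:vector-triple-prod} since $\yb\cdot\nabla_S Y_{\ell,m}(\yb) = 0$ and $|\yb|=1$, which is directly in Lemma~\ref{lem:localization}; the $\xb\times(\yb\times\nabla_S Y_{\ell,m}(\yb))$ piece is handled by \eqref{eq:13} of the previous lemma after pulling the factor $f'(\xb\cdot\yb)$ into a surface-gradient form $\nabla^{\xb}_S f(\xb\cdot\yb) = f'(\xb\cdot\yb)[\yb - (\xb\cdot\yb)\xb]$, noting that the extra $(\xb\cdot\yb)\xb$ term crosses with a $\yb$-tangent vector to give something proportional to $\xb\times(\yb\times\nabla_S Y_{\ell,m})$ again — so I would solve the resulting linear relation. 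The $\Delta_S Y_{\ell,m}(\xb) = -\ell(\ell+1)Y_{\ell,m}(\xb)$ normal component then emerges from the $\ell(\ell+1)\lambda_\ell\{f\}Y_{\ell,m}(\xb)\xb$ term in the fourth identity of Lemma~\ref{lem:localization} (after an integration by parts moving $f'$ onto $Y_{\ell,m}$, or equivalently from the $-\nabla_S Y_{\ell,m}$ localization), and the $\nabla_S Y_{\ell,m}(\xb)$ component from assembling $-\lambda_\ell\{f + tf'\}$ against the $\lambda_\ell\{f'\}$ and $\lambda_\ell\{f\}$ contributions, which should collapse to $\lambda_\ell\{(1-t)f' - f\}$ after using $\lambda_\ell\{(tf')'\text{-type}\}$ integration-by-parts identities of the form $\lambda_\ell\{f + tf'\} = \lambda_\ell\{f\} + \lambda_\ell\{tf'\}$. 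The main obstacle throughout is the consistent handling of the normal versus tangential components and, in \eqref{eq:Clm}, reconciling the $\lambda_\ell$-arguments via the algebraic identity relating $\lambda_\ell\{f'\}$, $\lambda_\ell\{tf'\}$, and $\lambda_\ell\{f\}$ (essentially a Legendre-polynomial integration by parts), to confirm the stated coefficients exactly.
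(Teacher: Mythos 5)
There is a genuine gap, and it is precisely the point the hypothesis $\sqrt{1-t^2}f'\in L^1(-1,1)$ is designed to address. Your main mechanism --- splitting $\yb-\xb$ into a ``$\yb$-part'' and an ``$\xb$-part'' and invoking Lemma~\ref{lem:localization} with the kernels $f'$, $tf'$, and $f+tf'$ (e.g.\ $\lambda_\ell\{tf'\}$ and $\lambda_\ell\{f'\}$ for \eqref{eq:Ylm}, the fourth and fifth identities of Lemma~\ref{lem:localization} for \eqref{eq:Clm}) --- requires $f'\in L^1(-1,1)$, which is the hypothesis of Lemma~\ref{lem:localization} but \emph{not} of the present lemma. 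Under the stated assumption only $\sqrt{1-t^2}f'$ is integrable, so each of your separated integrals may diverge even though their combination is finite: the factor $\vert\yb-\xb\vert=\sqrt{2(1-t)}$ (equivalently $\vert\nabla_S^{\xb}f(\xb\cdot\yb)\vert=\vert f'\vert\sqrt{1-t^2}$) provides exactly the cancellation at $t=1$ that your splitting destroys. This is not a hypothetical worry: the kernels of Section~\ref{sec:nonloc_anal} have $\gamma_\delta'(t)\sim(1-t)^{-(2-a)/2}$ near $t=1$, which fails to be in $L^1$ for $-1<a\le0$ while $\sqrt{1-t^2}\gamma_\delta'\in L^1$, so for those kernels the quantities $\lambda_\ell\{f'\}$, $\lambda_\ell\{tf'\}$, $\lambda_\ell\{f+tf'\}$ you rely on do not exist. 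The paper avoids this by writing $\yb-\xb=\left[(\xb\cdot\yb)-1\right]\xb+\nabla_S^{\xb}(\xb\cdot\yb)$ (or the analogous $\yb$-version), so every separate piece carries either a factor $(1-t)$ or a surface gradient of $f$, keeping all intermediate integrals finite; it then concludes with Lemma~\ref{lem:localization} applied to kernels such as $(t-1)f'$, with $\nabla_S^{\xb}$ pulled outside the integral, and with \eqref{eq:12}--\eqref{eq:14}. Your formal algebra does reproduce the stated coefficients (I checked \eqref{eq:Ylm} and \eqref{eq:Clm}), but as written the proof is only valid under the stronger assumption $f'\in L^1(-1,1)$, which defeats the purpose of the ``generalized'' lemma unless you add a separate limiting/density argument.

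A secondary error: the claim that ``crossing with a vector tangent at $\yb$ kills any multiple of $\yb$'' is false; $\yb\times\nabla_S Y_{\ell,m}(\yb)$ is generically nonzero. The leftover term $f'(\xb\cdot\yb)(1-\xb\cdot\yb)\,\yb$ vanishes only in \eqref{eq:Xlm}, because there it is crossed with $Y_{\ell,m}(\yb)\yb$, a vector \emph{parallel} to $\yb$. In \eqref{eq:Glm} that term survives and must be evaluated --- via the fifth identity of Lemma~\ref{lem:localization} applied to the kernel $(1-t)f'$ (which is legitimate under the weak hypothesis) --- and it is exactly what produces the $\lambda_\ell\{(1-t)f'\}$ part of the coefficient; your later sentence gestures at this but contradicts the stated mechanism. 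With that correction, your decomposition $f'(\xb\cdot\yb)(\yb-\xb)=-\nabla_S^{\yb}f(\xb\cdot\yb)+f'(\xb\cdot\yb)(1-\xb\cdot\yb)\yb$ for \eqref{eq:Glm} and \eqref{eq:Xlm} is in fact the paper's own route; it is your treatments of \eqref{eq:Ylm} and \eqref{eq:Clm} that need to be recast in the same spirit.
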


\begin{proof}
We start with \eqref{eq:Ylm}. We use Lemma \ref{lem:kernel} to write $\bs{y}=(\xb\cdot\yb)\xb + \nabla^{\xb}_S(\xb\cdot\yb)$ and $f'(\xb\cdot\yb)\nabla^{\xb}_S(\xb\cdot\yb)=\nabla^{\xb}_Sf(\xb\cdot\yb)$,
\begin{align}
& \; \int_{\Sph^2}f'(\xb\cdot\yb)(\yb-\xb)Y_{\ell,m}(\yb)\ud\Omega(\yb),\nonumber\\
= & \; \int_{\Sph^2}f'(\xb\cdot\yb)[(\xb\cdot\yb) - 1]Y_{\ell,m}(\yb)\ud\Omega(\yb)\xb + \int_{\Sph^2}\nabla^{\xb}_Sf(\xb\cdot\yb)Y_{\ell,m}(\yb)\ud\Omega(\yb).
\end{align}
This is where we need $\sqrt{1-t^2}f'\in L^1(-1,1)$ since 
\begin{align}
\vert\nabla^{\xb}_Sf(\xb\cdot\yb)\vert=\vert f'(\xb\cdot\yb)\vert\sqrt{1-(\xb\cdot\yb)^2},
\end{align}
via Lemma \ref{lem:kernel}. We conclude with localization from Lemma \ref{lem:localization} for $Y_{\ell,m}(\yb)$.

We continue with \eqref{eq:Glm}. Similarly, writing $\bs{x}=(\xb\cdot\yb)\yb + \nabla^{\yb}_S(\xb\cdot\yb)$,
\begin{align}
& \; \int_{\Sph^2}f'(\xb\cdot\yb)(\yb-\xb)\times\nabla_SY_{\ell,m}(\yb)\ud\Omega(\yb),\nonumber\\
= & \; \int_{\Sph^2}f'(\xb\cdot\yb)[1-(\xb\cdot\yb)]\yb\times\nabla_SY_{\ell,m}(\yb)\ud\Omega(\yb)\nonumber\\
& \; - \int_{\Sph^2}\nabla^{\yb}_S f(\xb\cdot\yb)\times\nabla_SY_{\ell,m}(\yb)\ud\Omega(\yb).
\end{align}
We conclude with localization from Lemma \ref{lem:localization} for $\yb\times\nabla_SY_{\ell,m}(\yb)$ and \eqref{eq:12}.

Let us now prove \eqref{eq:Clm}. Again, we write $\bs{y}=(\xb\cdot\yb)\xb + \nabla^{\xb}_S(\xb\cdot\yb)$,
\begin{align}
& \; \int_{\Sph^2}f'(\xb\cdot\yb)(\yb-\xb)\times(\yb\times\nabla_S Y_{\ell,m}(\yb))\ud\Omega(\yb),\nonumber\\
= & \; \xb\times\left(\int_{\Sph^2}f'(\xb\cdot\yb)[(\xb\cdot\yb) - 1]\yb\times\nabla_S Y_{\ell,m}(\yb)\ud\Omega(\yb)\right)\nonumber\\
& \; + \int_{\Sph^2}\nabla^{\yb}_S f(\xb\cdot\yb)\times(\yb\times\nabla_S Y_{\ell,m}(\yb))\ud\Omega(\yb).
\end{align}
We conclude with localization from Lemma \ref{lem:localization} for $\yb\times\nabla_SY_{\ell,m}(\yb)$ and \eqref{eq:13}.

We finish with \eqref{eq:Xlm}. We have
\begin{align}
& \; \int_{\Sph^2}f'(\xb\cdot\yb)(\yb-\xb)\times(Y_{\ell,m}(\yb)\yb)\ud\Omega(\yb),\nonumber\\
= & \; \int_{\Sph^2}f'(\xb\cdot\yb)[1-(\xb\cdot\yb)]\yb\times(Y_{\ell,m}(\yb)\yb)\ud\Omega(\yb)\nonumber\\
& \; - \int_{\Sph^2}\nabla^{\yb}_S f(\xb\cdot\yb)\times(Y_{\ell,m}(\yb)\yb)\ud\Omega(\yb).
\end{align}
We conclude with \eqref{eq:14}.
\end{proof}

We are now ready to prove our main results about nonlocal operators.

\begin{theorem}[Diagonalization of nonlocal operators]\label{thm:nonlocal_operators}
Let $\gamma_\delta\in L^1(-1,1)$ such that $\sqrt{1-t^2}\gamma_\delta'\in L^1(-1,1)$, and define
\begin{align}
\Lambda_\ell^\delta = \lambda_\ell\{\gamma_\delta\}, \quad \Theta_\ell^\delta = \lambda_\ell\{(1-t)\gamma_\delta'\}.
\end{align}
The weighted nonlocal surface divergence and scalar surface curl satisfy
\begin{align}
& \DD^\delta_S\{\nabla_S Y_{\ell,m}\} = -\Lambda_\ell^\delta\ell(\ell+1)Y_{\ell,m}, && \CC^\delta_S\{\nabla_S Y_{\ell,m}\} = 0,\\
& \DD^\delta_S\{\xb\times\nabla_S Y_{\ell,m}\} = 0, && \CC^\delta_S\{\xb\times\nabla_S Y_{\ell,m}\} = \Lambda_\ell^\delta\ell(\ell+1)Y_{\ell,m},\\
& \DD^\delta_S\{Y_{\ell,m}\xb\} = 0, && \CC^\delta_S\{Y_{\ell,m}\xb\} = 0.
\end{align}
The weighted nonlocal surface gradient and vector surface curl satisfy
\begin{align}
& \bs{\GG}^\delta_S\{Y_{\ell,m}\} = \Lambda_\ell^\delta\nabla_S Y_{\ell,m}, && \bs{\CC}^\delta_S\{Y_{\ell,m}\} = \Lambda_\ell^\delta\xb\times\nabla_S Y_{\ell,m}.
\end{align}
Finally, the weighted nonlocal curl and its adjoint satisfy
\begin{align}
& \bs{\CC}^\delta\{\nabla_S Y_{\ell,m}\} = [\Theta_0^\delta + \Theta_\ell^\delta - \Lambda_\ell^\delta]\xb\times\nabla_SY_{\ell,m},\\
& \bs{\CC}^\delta\{\xb\times\nabla_S Y_{\ell,m}\} = -\ell(\ell+1)\Lambda_\ell^\delta Y_{\ell,m}\xb + [-\Theta_0^\delta + \Theta_\ell^\delta - \Lambda_\ell^\delta]\nabla_SY_{\ell,m},\\
& \bs{\CC}^\delta\{Y_{\ell,m}\xb\} = -\Lambda_\ell^\delta\xb\times\nabla_SY_{\ell,m},\\
& (\bs{\CC}^\delta)^*\{\nabla_S Y_{\ell,m}\} = [-\Theta_0^\delta + \Theta_\ell^\delta - \Lambda_\ell^\delta]\xb\times\nabla_SY_{\ell,m},\\
& (\bs{\CC}^\delta)^*\{\xb\times\nabla_S Y_{\ell,m}\} = -\ell(\ell+1)\Lambda_\ell^\delta Y_{\ell,m}\xb + [\Theta_0^\delta + \Theta_\ell^\delta - \Lambda_\ell^\delta]\nabla_SY_{\ell,m},\\
& (\bs{\CC}^\delta)^*\{Y_{\ell,m}\xb\} = -\Lambda_\ell^\delta\xb\times\nabla_SY_{\ell,m}.
\end{align}
\end{theorem}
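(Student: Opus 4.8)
The plan is to reduce everything to the (generalized) localization formulas of Lemma~\ref{lem:localization} and \eqref{eq:Ylm}--\eqref{eq:Xlm}, applied with $f=\gamma_\delta$, exploiting three bookkeeping reductions that hold pointwise at each fixed $\xb\in\Stwo$ straight from Definition~\ref{def:w_nonloc_operators}: (i) $\bs{\CC}_S^\delta\{u\}(\xb)=\xb\times\bs{\GG}_S^\delta\{u\}(\xb)$, by pulling the constant vector $\xb$ out of the integral in \eqref{eq:w_nonloc_vector_surf_curl}; (ii) $\CC_S^\delta\{\Vb\}(\xb)=\DD_S^\delta\{\zb\mapsto\zb\times\Vb(\zb)\}(\xb)$, from $(\zb\times\Vb(\zb))\cdot\bs{\alpha}_\delta=-\Vb(\zb)\cdot(\zb\times\bs{\alpha}_\delta)$ in \eqref{eq:w_nonloc_surf_div}; and (iii) comparing \eqref{eq:w_nonloc_curl} with the adjoint curl from Theorem~\ref{thm:w_nonloc_ajdoints}, $(\bs{\CC}^\delta)^*\{\Vb\}(\xb)-\bs{\CC}^\delta\{\Vb\}(\xb)=2\bigl[\int_{\Stwo}\bs{\beta}_\delta(\xb,\yb)\ud\Omega(\yb)\bigr]\times\Vb(\xb)$. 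The one auxiliary fact needed is $\int_{\Stwo}\bs{\beta}_\delta(\xb,\yb)\ud\Omega(\yb)=-\Theta_0^\delta\xb$: writing, via Lemma~\ref{lem:kernel}, $\bs{\beta}_\delta(\xb,\yb)=\nabla_S^{\xb}\gamma_\delta(\xb\cdot\yb)-(1-\xb\cdot\yb)\gamma_\delta'(\xb\cdot\yb)\xb$, the gradient term integrates to $\nabla_S^{\xb}\bigl[\int_{\Stwo}\gamma_\delta(\xb\cdot\yb)\ud\Omega(\yb)\bigr]=\bs{0}$ because the inner integral does not depend on $\xb$, and the remaining term gives $-\lambda_0\{(1-t)\gamma_\delta'\}\xb=-\Theta_0^\delta\xb$; the same identity, projected onto the tangent plane at $\xb$ by $\Pi_{\xb}$, shows $\int_{\Stwo}\bs{\alpha}_\delta(\xb,\yb)\ud\Omega(\yb)=\bs{0}$ since $\bs{\alpha}_\delta=\Pi_{\xb}\bs{\beta}_\delta$. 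Thus $(\bs{\CC}^\delta)^*\{\Vb\}=\bs{\CC}^\delta\{\Vb\}-2\Theta_0^\delta\,\xb\times\Vb$, which together with $\xb\times(\xb\times\nabla_SY_{\ell,m})=-\nabla_SY_{\ell,m}$ and $\xb\times(Y_{\ell,m}\xb)=\bs{0}$ makes the six adjoint-curl identities follow from the six curl identities; by (i) the vector-surface-curl identity follows from the surface-gradient identity, and by (ii) the three scalar-surface-curl identities from the three surface-divergence identities. Hence only the surface gradient, the surface divergence, and the curl require genuine work.

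\textbf{Surface gradient.} Since $\bs{\alpha}_\delta(\xb,\yb)$ is tangent at $\xb$, $\bs{\GG}_S^\delta\{Y_{\ell,m}\}(\xb)$ equals its own $\Pi_{\xb}$-projection; using $\int\bs{\alpha}_\delta\,\ud\Omega=\bs{0}$ in \eqref{eq:w_nonloc_surf_grad} the self-interaction term drops and $\bs{\GG}_S^\delta\{Y_{\ell,m}\}(\xb)=\int_{\Stwo}Y_{\ell,m}(\yb)\bs{\alpha}_\delta(\xb,\yb)\ud\Omega(\yb)=\Pi_{\xb}\int_{\Stwo}Y_{\ell,m}(\yb)\gamma_\delta'(\xb\cdot\yb)(\yb-\xb)\ud\Omega(\yb)$. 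The last integral is computed by \eqref{eq:Ylm}; $\Pi_{\xb}$ annihilates the $Y_{\ell,m}(\xb)\xb$ part and fixes the $\lambda_\ell\{\gamma_\delta\}\nabla_SY_{\ell,m}(\xb)$ part, leaving $\bs{\GG}_S^\delta\{Y_{\ell,m}\}=\Lambda_\ell^\delta\nabla_SY_{\ell,m}$, whence $\bs{\CC}_S^\delta\{Y_{\ell,m}\}=\Lambda_\ell^\delta\,\xb\times\nabla_SY_{\ell,m}$.

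\textbf{Surface divergence.} The quickest argument uses $(\DD_S^\delta)^*=-\bs{\GG}_S^\delta$ (Theorem~\ref{thm:w_nonloc_ajdoints}) and the surface-gradient result: for every $(\ell',m')$, $\int_{\Stwo}\overline{Y_{\ell',m'}}\,\DD_S^\delta\{\Vb\}\ud\Omega=-\Lambda_{\ell'}^\delta\int_{\Stwo}\nabla_S\overline{Y_{\ell',m'}}\cdot\Vb\,\ud\Omega$, and the mutual $L^2$-orthogonality of the three vector-spherical-harmonic families (with $\int_{\Stwo}\nabla_S\overline{Y_{\ell',m'}}\cdot\nabla_SY_{\ell,m}\ud\Omega=\ell(\ell+1)\delta_{\ell\ell'}\delta_{mm'}$) then gives, by completeness, $\DD_S^\delta\{\nabla_SY_{\ell,m}\}=-\Lambda_\ell^\delta\ell(\ell+1)Y_{\ell,m}$ and $\DD_S^\delta\{\xb\times\nabla_SY_{\ell,m}\}=\DD_S^\delta\{Y_{\ell,m}\xb\}=0$. (Alternatively, directly from \eqref{eq:w_nonloc_surf_div}: $\Vb=Y_{\ell,m}\xb$ vanishes at once from $\xb\perp\bs{\alpha}_\delta(\xb,\yb)$, $\yb\perp\bs{\alpha}_\delta(\yb,\xb)$; for the other two, $\Vb(\xb)\perp\xb$ and $\Vb(\yb)\perp\yb$ collapse the integrand to $\gamma_\delta'(\xb\cdot\yb)[\Vb(\xb)\cdot\yb-\Vb(\yb)\cdot\xb]$, whose first piece integrates to $\Vb(\xb)\cdot\int_{\Stwo}\nabla_S^{\xb}\gamma_\delta(\xb\cdot\yb)\ud\Omega=0$ and whose second is handled by the fourth and fifth lines of Lemma~\ref{lem:localization} after dotting with $\xb$.) The scalar surface curl then follows from (ii) and $\xb\times(\xb\times\nabla_SY_{\ell,m})=-\nabla_SY_{\ell,m}$, $\xb\times(Y_{\ell,m}\xb)=\bs{0}$.

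\textbf{Curl.} Split $\bs{\CC}^\delta\{\Vb\}(\xb)=\int_{\Stwo}\gamma_\delta'(\xb\cdot\yb)(\yb-\xb)\times\Vb(\yb)\ud\Omega(\yb)-\bigl[\int_{\Stwo}\bs{\beta}_\delta(\xb,\yb)\ud\Omega(\yb)\bigr]\times\Vb(\xb)$, so the second term is $\Theta_0^\delta\,\xb\times\Vb(\xb)$ by the auxiliary fact. For $\Vb=\nabla_SY_{\ell,m}$, $\xb\times\nabla_SY_{\ell,m}$ and $Y_{\ell,m}\xb$ the first term is exactly the left-hand side of \eqref{eq:Glm}, \eqref{eq:Clm} and \eqref{eq:Xlm} respectively (with $\Delta_SY_{\ell,m}=-\ell(\ell+1)Y_{\ell,m}$ in the middle one); adding the $\Theta_0^\delta$ contributions and using $\lambda_\ell\{(1-t)\gamma_\delta'-\gamma_\delta\}=\Theta_\ell^\delta-\Lambda_\ell^\delta$ (and $\xb\times(\xb\times\nabla_SY_{\ell,m})=-\nabla_SY_{\ell,m}$, $\xb\times(Y_{\ell,m}\xb)=\bs{0}$) gives the three curl formulas; reduction (iii) then gives the three adjoint-curl formulas. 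Most of this is arithmetic, but the step I expect to need the most care — and which has no Euclidean counterpart, since there $\int\bs{\beta}=\bs{0}$ automatically — is the appearance of the $\ell$-independent self-interaction coefficient $\Theta_0^\delta$ coming from the $-\Vb(\xb)$ term: it enters $\bs{\CC}^\delta$ and $(\bs{\CC}^\delta)^*$ with opposite signs and must be tracked separately from $\Theta_\ell^\delta$. A secondary, purely technical point throughout is to check that the hypothesis $\sqrt{1-t^2}\gamma_\delta'\in L^1(-1,1)$ — which by Lemma~\ref{lem:kernel} is exactly the integrability of $|\nabla_S^{\xb}\gamma_\delta(\xb\cdot\yb)|=|\gamma_\delta'(\xb\cdot\yb)|\sqrt{1-(\xb\cdot\yb)^2}$ — legitimizes each invocation of Lemma~\ref{lem:gen_localization}.
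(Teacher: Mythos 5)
Your proposal is correct, and it runs on the same engine as the paper's proof (Lemma~\ref{lem:localization} and the generalized localization formulas \eqref{eq:Ylm}--\eqref{eq:Xlm} applied with $f=\gamma_\delta$), but several of your reductions are organized differently. For the curl you and the paper do essentially the same thing: split off the $-\Vb(\xb)$ term and evaluate $\int_{\Stwo}\bs{\beta}_\delta(\xb,\yb)\ud\Omega(\yb)=-\Theta_0^\delta\xb$, which in the paper is just the $\ell=0$ instance of \eqref{eq:Ylm}; your explicit isolation of this fact, and the resulting identity $(\bs{\CC}^\delta)^*\{\Vb\}=\bs{\CC}^\delta\{\Vb\}-2\Theta_0^\delta\,\xb\times\Vb$, is a nicer way to dispatch the adjoint-curl formulas than the paper's ``nearly identical'' re-derivation, and it makes the sign flip on $\Theta_0^\delta$ transparent. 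For the tangential operators you invert the paper's order: the paper computes $\bs{\CC}_S^\delta\{Y_{\ell,m}\}$ first via the antisymmetry $\xb\times\nabla_S^{\xb}\gamma_\delta(\xb\cdot\yb)=-\yb\times\nabla_S^{\yb}\gamma_\delta(\xb\cdot\yb)$ plus integration by parts and then crosses with $\xb$, whereas you compute $\bs{\GG}_S^\delta\{Y_{\ell,m}\}$ directly from \eqref{eq:Ylm} (using $\int\bs{\alpha}_\delta\,\ud\Omega=\bs{0}$) and cross with $\xb$ afterwards; both are valid. For the divergence, the paper argues componentwise with Lemma~\ref{lem:localization} and an integration by parts, while your primary argument is a duality argument through $(\DD_S^\delta)^*=-\bs{\GG}_S^\delta$ (Theorem~\ref{thm:w_nonloc_ajdoints}), orthogonality of the three vector-harmonic families, and completeness; this is a genuinely different and economical route, it just needs the small remark that $\DD_S^\delta\{\Vb\}\in L^2(\Stwo,\R)$ (immediate since $\vert\bs{\alpha}_\delta\vert=\vert\gamma_\delta'\vert\sqrt{1-t^2}\in L^1$ and the fields are bounded) so that identifying Fourier coefficients determines the function; your parenthetical direct alternative is essentially the paper's computation. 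Your deduction of the scalar surface curl from the divergence via $\CC_S^\delta\{\Vb\}=\DD_S^\delta\{\zb\mapsto\zb\times\Vb(\zb)\}$ matches the relation the paper builds into Definition~\ref{def:nonloc_operators}. One minor point of rigor: justifying $\int_{\Stwo}\nabla_S^{\xb}\gamma_\delta(\xb\cdot\yb)\ud\Omega(\yb)=\bs{0}$ by pulling the surface gradient outside the integral requires an interchange of differentiation and integration that is not free for merely $L^1$ kernels; it is cleaner (and stays at the paper's level of rigor) to get it from the first line of Lemma~\ref{lem:localization}, $\int\gamma_\delta'(\xb\cdot\yb)\yb\,\ud\Omega(\yb)=\lambda_0\{t\gamma_\delta'\}\xb$, minus the rotationally invariant scalar integral $\int\gamma_\delta'(\xb\cdot\yb)(\xb\cdot\yb)\ud\Omega(\yb)\,\xb$, or equivalently as the $\ell=0$ case of \eqref{eq:Ylm} projected tangentially. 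This does not affect the validity of the argument.
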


\begin{proof}
We begin with $\DD^\delta_S$. For $\Vb=Y_{\ell,m}\xb$, it is clear that the divergence vanishes. For $\Vb=\xb\times\nabla_S Y_{\ell,m}$,
\begin{align}
\DD_\delta\{\xb\times\nabla_S Y_{\ell,m}\}(\xb) = & \; \int_{\Stwo}\Big\{\left[\xb\times\nabla_S Y_{\ell,m}(\xb)\right]\cdot \yb\gamma_\delta'(\xb\cdot\yb)\nonumber\\
& \; - \left[\yb\times\nabla_S Y_{\ell,m}(\yb)\right]\cdot\nabla_S^{\yb}\gamma_\delta(\xb\cdot\yb)\Big\}\ud\Omega(\yb).
\end{align}
The fist time vanishes using Lemma \ref{lem:localization} for $\yb$. The second therm vanishes because the divergence annihilates the curl, after integrating by parts. The case $\Vb=\nabla_S Y_{\ell,m}$ is nearly identical. Using $\nabla_S Y_{\ell,m} = \xb\times\left(\nabla_S Y_{\ell,m}\times\xb\right)$ brings us to
\begin{align}
\DD_S^\delta\{\nabla_S Y_{\ell,m}\}(\xb) & = \int_{\Stwo} \nabla_S\cdot\left[\nabla_S Y_{\ell,m}(\yb)\right]\gamma_\delta(\xb\cdot\yb)\ud\Omega(\yb)\nonumber\\
& = \int_{\Stwo} \Delta_S Y_{\ell,m}(\yb)\gamma_\delta(\xb\cdot\yb)\ud\Omega(\yb).
\end{align}
We conclude with Lemma \ref{lem:localization} for $\Delta_S Y_{\ell,m}(\yb)$.

The proofs for $\CC_S^\delta$ are similar to those for $\DD_S^\delta$, so we skip them and continue with $\bs{\CC}_S^\delta$,
\begin{align}
\bs{\CC}_S^\delta\{Y_{\ell,m}\}(\xb) = \int_{\Stwo}\left[Y_{\ell,m}(\yb) - Y_{\ell,m}(\xb)\right]\xb\times\nabla_S^{\xb}\gamma_\delta(\xb\cdot\yb)\ud\Omega(\yb).
\label{eq:curlstepA}
\end{align}
Now, since $\xb\times\nabla_S^{\xb}\gamma_\delta(\xb\cdot\yb) = -\yb\times\nabla_S^{\yb}\gamma_\delta(\xb\cdot\yb)$, Eq.~\eqref{eq:curlstepA} becomes, after integrating by parts,
\begin{align}
\bs{\CC}_S^\delta\{Y_{\ell,m}\}(\xb) = \int_{\Stwo}\yb\times\nabla_S Y_{\ell,m}(\yb)\gamma_\delta(\xb\cdot\yb)\ud\Omega(\yb).
\end{align}
We conclude with Lemma \ref{lem:localization} for $\yb\times\Delta_S Y_{\ell,m}(\yb)$. To prove the formula for $\bs{\GG}^\delta_S$, we cross it with $\xb$.

We finish with $\bs{\CC}^\delta$. (The proofs for $(\bs{\CC}^\delta)^*$ are nearly identical.) We start with $\Vb=\nabla_S Y_{\ell,m}$. We get 
\begin{align}
\bs{\CC}_\delta\{\nabla_S Y_{\ell,m}\}(\xb) = & \; \int_{\Sph^2}\gamma'_\delta(\xb\cdot\yb)(\yb-\xb)\times\nabla_SY_{\ell,m}(\yb)\ud\Omega(\yb)\nonumber\\
& \; - \int_{\Sph^2}\gamma'_\delta(\xb\cdot\yb)(\yb-\xb)\times\nabla_SY_{\ell,m}(\xb)\ud\Omega(\yb),
\end{align}
and use Lemma \ref{lem:gen_localization} for $(\yb-\xb)\times\nabla_SY_{\ell,m}(\yb)$ and $\yb-\xb$. For $\Vb=\xb\times\nabla_S Y_{\ell,m}$, we obtain
\begin{align}
\bs{\CC}_\delta\{\xb\times\nabla_S Y_{\ell,m}\}(\xb) = & \int_{\Sph^2}\gamma'_\delta(\xb\cdot\yb)(\yb-\xb)\times(\yb\times\nabla_SY_{\ell,m}(\yb))\ud\Omega(\yb)\nonumber\\
& - \int_{\Sph^2}\gamma'_\delta(\xb\cdot\yb)(\yb-\xb)\times(\xb\times\nabla_SY_{\ell,m}(\xb))\ud\Omega(\yb),
\end{align}
and use Lemma \ref{lem:gen_localization} for $(\yb-\xb)\times(\yb\times\nabla_SY_{\ell,m}(\yb))$ and $\yb-\xb$. Finally, for $\Vb=Y_{\ell,m}\xb$, we arrive at
\begin{align}
\bs{\CC}_\delta\{\nabla_S Y_{\ell,m}\xb\}(\xb) = & \; \int_{\Sph^2}\gamma'_\delta(\xb\cdot\yb)(\yb-\xb)\times(Y_{\ell,m}(\yb)\yb)\ud\Omega(\yb)\nonumber\\
& \; - \int_{\Sph^2}\gamma'_\delta(\xb\cdot\yb)(\yb-\xb)\times(Y_{\ell,m}(\xb)\xb)\ud\Omega(\yb),
\end{align}
and use Lemma \ref{lem:gen_localization} for $(\yb-\xb)\times(Y_{\ell,m}(\yb)\yb)$ and $\yb-\xb$.
\end{proof}

We end this section by proving a nonlocal Stokes theorem. Let $\Sigma$ be a spherical patch and let $\partial\Sigma$ denote its boundary. Then the Stokes theorem states that
\begin{align}
\int_{\Sigma}\bs{\CC}^0\{\Vb\}\cdot\xb\ud\Omega = \int_{\Sigma}(\nabla\times\Vb)\cdot\xb\ud\Omega = \int_{\partial\Sigma}\Vb\cdot\bs{s}\ud\omega,
\end{align}
where $\bs{s}$ is a unit vector along $\partial\Sigma$. We have the following nonlocal version.

\begin{theorem}[Nonlocal Stokes theorem]\label{thm:nonlocalstokes}
For any smooth vector field $\Vb$, define the averaging operator $\bs{\mathcal{A}}^\delta$ via
\begin{align}
\bs{\mathcal{A}}^\delta\{\Vb\}(\xb) = \int_{\Sph^2}\gamma_\delta(\xb\cdot\yb)\Vb(\yb)\ud\Omega(\yb), \quad \xb\in\Stwo.
\end{align}
Then
\begin{align}
\int_{\Sigma}\bs{\CC}^\delta\{\Vb\}\cdot\xb\ud\Omega = \int_{\partial\Sigma}\bs{\mathcal{A}}^\delta\{\Vb\}\cdot\bs{s}\ud\omega.
\end{align}
\end{theorem}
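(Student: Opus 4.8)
The plan is to reduce the nonlocal identity to the \emph{classical} Stokes theorem stated just above, applied not to $\Vb$ itself but to the smooth averaged field $\bs{W}:=\bs{\mathcal{A}}^\delta\{\Vb\}$. Since $\bs{W}$ is a vector field on $\Stwo$, the classical theorem gives $\int_{\partial\Sigma}\bs{W}\cdot\bs{s}\ud\omega=\int_{\Sigma}\bs{\CC}^0\{\bs{W}\}\cdot\xb\ud\Omega$, so it suffices to establish the pointwise identity
\begin{align*}
\bs{\CC}^\delta\{\Vb\}(\xb)\cdot\xb=\bs{\CC}^0\{\bs{\mathcal{A}}^\delta\{\Vb\}\}(\xb)\cdot\xb,\qquad \xb\in\Stwo ;
\end{align*}
integrating this over $\Sigma$ and invoking the classical result then closes the argument. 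It is worth stressing that the \emph{full} vector identity $\bs{\CC}^\delta\{\Vb\}=\bs{\CC}^0\{\bs{\mathcal{A}}^\delta\{\Vb\}\}$ is false in general — only the normal components agree — which is precisely why the averaging operator $\bs{\mathcal{A}}^\delta$ has to appear on the boundary.

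\textbf{Left-hand side.} I would expand $\Vb$ in vector spherical harmonics as in \eqref{eq:vector_spherical_expansion}, with coefficients $V^s_{\ell,m},V^t_{\ell,m},V^x_{\ell,m}$, and apply Theorem~\ref{thm:nonlocal_operators} term by term. From that theorem the images $\bs{\CC}^\delta\{\nabla_SY_{\ell,m}\}$ and $\bs{\CC}^\delta\{Y_{\ell,m}\xb\}$ are purely tangential, hence annihilated by the projection onto $\xb$, whereas $\bs{\CC}^\delta\{\xb\times\nabla_SY_{\ell,m}\}\cdot\xb=-\ell(\ell+1)\Lambda_\ell^\delta Y_{\ell,m}$. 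Thus only the toroidal part of $\Vb$ survives and
\begin{align*}
\bs{\CC}^\delta\{\Vb\}(\xb)\cdot\xb=-\sum_{\ell=0}^{\infty}\sum_{m=-\ell}^{\ell}\ell(\ell+1)\,\Lambda_\ell^\delta\,V^t_{\ell,m}\,Y_{\ell,m}(\xb).
\end{align*}

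\textbf{Right-hand side.} Next I would compute $\bs{\mathcal{A}}^\delta\{\Vb\}$ on each vector spherical harmonic using Lemma~\ref{lem:localization} (equivalently, the Funk--Hecke identity), where $\gamma_\delta$ plays the role of the kernel ``$f'$'' there. The key ``selection rule'' is that $\bs{\mathcal{A}}^\delta$ keeps $\nabla_SY_{\ell,m}$ and $Y_{\ell,m}\xb$ within $\mathrm{span}\{\nabla_SY_{\ell,m},Y_{\ell,m}\xb\}$, while $\bs{\mathcal{A}}^\delta\{\xb\times\nabla_SY_{\ell,m}\}=\Lambda_\ell^\delta\,\xb\times\nabla_SY_{\ell,m}$ (this is exactly the $\yb\times\nabla_SY_{\ell,m}$ formula in Lemma~\ref{lem:localization} with $\lambda_\ell\{\gamma_\delta\}=\Lambda_\ell^\delta$). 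Hence the toroidal part of $\bs{\mathcal{A}}^\delta\{\Vb\}$ is $\sum_{\ell,m}\Lambda_\ell^\delta V^t_{\ell,m}\,\xb\times\nabla_SY_{\ell,m}$. Feeding this into $\bs{\CC}^0$ and using Theorem~\ref{thm:local_operators} — again only the toroidal sector survives the projection onto $\xb$, with $\bs{\CC}^0\{\xb\times\nabla_SY_{\ell,m}\}\cdot\xb=-\ell(\ell+1)Y_{\ell,m}$ — produces exactly the same series as for the left-hand side. This proves the pointwise identity, and integrating over $\Sigma$ and applying classical Stokes to $\bs{W}=\bs{\mathcal{A}}^\delta\{\Vb\}$ finishes the proof.

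\textbf{Where the work is.} The sector bookkeeping (which vector-harmonic component of $\Vb$ maps to which under $\bs{\CC}^\delta$, $\bs{\mathcal{A}}^\delta$, $\bs{\CC}^0$) is routine once Theorems~\ref{thm:local_operators} and \ref{thm:nonlocal_operators} and Lemma~\ref{lem:localization} are in hand. The genuine technical points are: (i) justifying the termwise manipulations — one needs the expansion of $\Vb$ to converge in a topology strong enough that $\bs{\mathcal{A}}^\delta$, $\bs{\CC}^\delta$, $\bs{\CC}^0$ commute with the summation, which for smooth $\Vb$ follows from the rapid decay of $V^s_{\ell,m},V^t_{\ell,m},V^x_{\ell,m}$, or alternatively one proves the identity first for finite linear combinations of vector spherical harmonics and then passes to the limit using $L^2$-boundedness of $\bs{\mathcal{A}}^\delta$ and continuity of the trace onto $\partial\Sigma$; and (ii) ensuring $\bs{\mathcal{A}}^\delta\{\Vb\}$ is regular enough to be admissible in the classical Stokes theorem on the patch $\Sigma$, which requires only mild regularity on $\gamma_\delta$ (or the use of the distributional form of Stokes' theorem). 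I expect (i) to be the main obstacle to write cleanly, since it is the only place where anything beyond the already-established diagonalization identities is needed.
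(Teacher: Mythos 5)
Your proposal is correct and follows essentially the same route as the paper: expand $\Vb$ in vector spherical harmonics, use the diagonalization of $\bs{\CC}^\delta$ (Theorem~\ref{thm:nonlocal_operators}) to show only the toroidal sector contributes to $\bs{\CC}^\delta\{\Vb\}\cdot\xb$, use the localization lemma to show $\bs{\mathcal{A}}^\delta$ acts as $\Lambda_\ell^\delta$ on the toroidal sector so that $(\nabla\times\bs{\mathcal{A}}^\delta\{\Vb\})\cdot\xb=\bs{\CC}^\delta\{\Vb\}\cdot\xb$, and then invoke the classical Stokes theorem for $\bs{\mathcal{A}}^\delta\{\Vb\}$. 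The only difference is that you flag the termwise-summation and regularity justifications explicitly, which the paper leaves implicit.
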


\begin{proof}
Let 
\begin{align}
\Vb = \sum_{\ell=0}^{+\infty}\sum_{m=-\ell}^{+\ell}\left(V^s_{\ell,m}\nabla_S Y_{\ell,m} + V^t_{\ell,m}\xb\times\nabla_S Y_{\ell,m} + V^x_{\ell,m}Y_{\ell,m}\xb\right).
\end{align}
Then via Theorem \ref{thm:nonlocal_operators},
\begin{align}
\bs{\CC}^\delta\{\Vb\}\cdot\xb = -\sum_{\ell=0}^{+\infty}\Lambda_\ell^\delta\sum_{m=-\ell}^{+\ell}\ell(\ell+1)V^t_{\ell,m}Y_{\ell,m}.
\end{align}
On the other hand, using Lemma \ref{lem:localization}, we have $\bs{\mathcal{A}}^\delta\{\xb\times\nabla_S Y_{\ell,m}\} = \Lambda_\ell^\delta\xb\times\nabla_S Y_{\ell,m}$ and
\begin{align}
& \bs{\mathcal{A}}^\delta\{\nabla_S Y_{\ell,m}\} = \ell(\ell+1)\lambda_\ell\{\mu_\delta\}Y_{\ell,m}\xb + \lambda_\ell\{\mu_\delta + t\gamma_\delta\}\nabla_S Y_{\ell,m},\\
& \bs{\mathcal{A}}^\delta\{Y_{\ell,m}\xb\} = \lambda_\ell\{t\gamma_\delta\}Y_{\ell,m}\xb + \lambda_\ell\{\mu_\delta\}\nabla_S Y_{\ell,m}, 
\end{align}
where $\mu_\delta(t)=\int_0^t\gamma_\delta(s)ds$ is the primitive of $\gamma_\delta$ that vanishes at the origin. Therefore, via Theorem \ref{thm:local_operators},
\begin{align}
(\nabla\times\bs{\mathcal{A}}^\delta\{\Vb\})\cdot\xb = -\sum_{\ell=0}^{+\infty}\Lambda_\ell^\delta\sum_{m=-\ell}^{+\ell}\ell(\ell+1)V^t_{\ell,m}Y_{\ell,m} = \bs{\CC}^\delta\{\Vb\}\cdot\xb.
\end{align}
This yields
\begin{align}
\int_{\Sigma}\bs{\CC}^\delta\{\Vb\}\cdot\xb\ud\Omega = \int_{\Sigma}(\nabla\times\bs{\mathcal{A}}^\delta\{\Vb\})\cdot\xb\ud\Omega = \int_{\partial\Sigma}\bs{\mathcal{A}}^\delta\{\Vb\}\cdot\bs{s}\ud\omega,
\end{align}
by applying the local Stokes theorem to $\bs{\mathcal{A}}^\delta\{\Vb\}$.
\end{proof}

\section{Analysis of the scaling and convergence to the operators of local vector calculus}\label{sec:nonloc_anal}

We shall consider kernels $\bs{\alpha}_\delta(\xb,\yb)$ for which we recover from \eqref{eq:alpharho} the weakly singular kernels $\rho_\delta(\vert\xb-\yb\vert)$ introduced in Ref.~\citen{montanelli2018b} for the nonlocal operator \eqref{eq:nonloc_diff}. The latter were of the form
\begin{align*}
\rho_\delta(\abs{\xb-\yb}) = \dfrac{4(1+a)}{\pi\delta^{2+2a}}\frac{\rchi_{[0,\delta]}(\abs{\xb-\yb})}{\abs{\xb-\yb}^{2-2a}} = \dfrac{(1+a)2^{1+a}}{\pi\delta^{2+2a}(1-t)^{1-a}}\rchi_{[0,\delta]}(\sqrt{2(1-t)}),
\end{align*}
where $-1<a<1$, $0<\delta\leq2$, and $-1\le t=\xb\cdot\yb \le 1$. Using
\begin{align*}
2\bs{\alpha}_\delta(\xb,\yb)\cdot\bs{\alpha}_\delta(\xb,\yb) = 2\left[1-\xb\cdot\yb\right]\left[1+\xb\cdot\yb\right]\left[\gamma_\delta'(\xb\cdot\yb)\right]^2,
\end{align*}
the condition $\rho_\delta(\vert\xb-\yb\vert)=2\vert\bs{\alpha}_\delta(\xb,\yb)\vert^2$ of Theorem \ref{thm:nonloc_composition} yields a non-negative
\begin{align*}
\gamma_\delta'(t) = \sqrt{\frac{1+a}{\pi}}\frac{2^{\frac{a}{2}}}{\delta^{1+a}}\frac{1}{(1-t)^{\frac{2-a}{2}}}\frac{1}{\sqrt{1+t}}\rchi_{[0,\delta]}(\sqrt{2(1-t)}).
\end{align*}
We note that $\sqrt{1-t^2}\gamma_\delta'\in L^1(-1,1)$. Integrating gives a monotonically increasing
\begin{align}
\gamma_\delta(t) = & \; \Bigg\{\frac{1}{2\pi}\frac{2}{\delta^2} + \frac{1}{a}\sqrt{\frac{2(1+a)}{\pi\delta^2}}\Bigg[\frac{2}{2+a}\pFq{2}{1}\left(\begin{array}{c} \frac{1}{2}, \frac{a}{2}\\ \frac{4+a}{2}\end{array}; \frac{\delta^2}{4}\right)\nonumber\\
& \; - \left(\frac{2(1-t)}{\delta^2}\right)^{\frac{a}{2}}\pFq{2}{1}\left(\begin{array}{c}\frac{1}{2}, \frac{a}{2}\\ \frac{2+a}{2}\end{array}; \frac{1-t}{2}\right)\Bigg]\Bigg\}\rchi_{[0,\delta]}(\sqrt{2(1-t)}),\label{eq:gammadelta}
\end{align}
on $(-1,1)$, $a\ne0$, with the constant chosen such that
\begin{align*}
\Lambda_0^\delta = 2\pi \int_{-1}^1\gamma_\delta(t)\ud t = 1.
\end{align*}
In the case that $a=0$, we interpret the above formula for $\gamma_\delta$ in the limiting sense.

\begin{lemma}[Positivity of the kernel]\label{lemma:GammaIsPositive}
For any $-1<a<1$, if $0<\delta\le\frac{1}{4}$, then $\gamma_\delta(t)>0$ on $[1-\delta^2/2,1]$.
\end{lemma}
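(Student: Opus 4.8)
The plan is to reduce everything to the single value $\gamma_\delta(1-\delta^2/2)$ and then to bound it using the normalization $\Lambda_0^\delta=1$. \emph{Reduction.} On $[1-\delta^2/2,1]$ the indicator in \eqref{eq:gammadelta} equals $1$, and the displayed formula for $\gamma_\delta'$ shows $\gamma_\delta'\ge0$ there, so $\gamma_\delta$ is non-decreasing on $[1-\delta^2/2,1]$ (the monotonicity already noted). Hence $\gamma_\delta(t)\ge\gamma_\delta(1-\delta^2/2)$ for every $t$ in this interval --- including $t=1$, where the value may be $+\infty$ in the weakly singular case $a\le0$ --- so it suffices to prove $\gamma_\delta(1-\delta^2/2)>0$.

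\emph{An identity for the endpoint value.} Since $\gamma_\delta$ vanishes on $(-1,1-\delta^2/2)$, the normalization gives $2\pi\int_{1-\delta^2/2}^1\gamma_\delta(t)\,\ud t=1$. Writing $\gamma_\delta(t)=\gamma_\delta(1-\delta^2/2)+\int_{1-\delta^2/2}^t\gamma_\delta'(s)\,\ud s$ for $t<1$ and interchanging the order of integration (all integrands are non-negative, and $(1-s)\gamma_\delta'(s)$, which behaves like $(1-s)^{a/2}$ near $s=1$, is integrable there even though $\gamma_\delta'$ need not be) yields
\[
\frac1{2\pi}=\frac{\delta^2}{2}\,\gamma_\delta\!\left(1-\tfrac{\delta^2}{2}\right)+\int_{1-\delta^2/2}^1(1-s)\,\gamma_\delta'(s)\,\ud s .
\]
Thus $\gamma_\delta(1-\delta^2/2)=\tfrac2{\delta^2}\bigl(\tfrac1{2\pi}-I\bigr)$, where $I:=\int_{1-\delta^2/2}^1(1-s)\gamma_\delta'(s)\,\ud s$, and it remains to prove $I<\tfrac1{2\pi}$.

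\emph{Estimating $I$.} From the formula for $\gamma_\delta'$ one has $(1-s)\gamma_\delta'(s)=\sqrt{\tfrac{1+a}{\pi}}\,\tfrac{2^{a/2}}{\delta^{1+a}}(1-s)^{a/2}(1+s)^{-1/2}$. On $[1-\delta^2/2,1]$ we have $1+s\ge2-\delta^2/2\ge\tfrac{63}{32}$ (this is exactly where $\delta\le\tfrac14$ is used), and $\int_{1-\delta^2/2}^1(1-s)^{a/2}\,\ud s=\tfrac2{a+2}(\delta^2/2)^{(a+2)/2}$; the powers of $\delta$ and $2$ then collapse, giving $I\le\tfrac{\delta}{a+2}\sqrt{\tfrac{32(1+a)}{63\pi}}$. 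Squaring, $I<\tfrac1{2\pi}$ is implied by $128\pi\delta^2(1+a)<63(a+2)^2$, and since $\delta^2\le\tfrac1{16}$ it is enough to check $8\pi(1+a)<63(a+2)^2$ for $a\in(-1,1)$. This is elementary: because $\tfrac{8\pi}{63}<\tfrac25$ and $a+1>0$, one has $63(a+2)^2-8\pi(a+1)>63\bigl(a^2+\tfrac{18}{5}a+\tfrac{18}{5}\bigr)$, and the quadratic $a^2+\tfrac{18}{5}a+\tfrac{18}{5}$ has discriminant $\tfrac{324}{25}-\tfrac{72}{5}=-\tfrac{36}{25}<0$, hence is strictly positive. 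Therefore $\gamma_\delta(1-\delta^2/2)>0$, and by the Reduction step $\gamma_\delta>0$ on all of $[1-\delta^2/2,1]$.

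I expect the only subtle point to be the interchange of integration in the second step when $a\le0$, where $\gamma_\delta$ is unbounded at $t=1$; this is harmless because the weight $1-s$ tames the singularity, and if desired one can integrate over $[1-\delta^2/2,1-\varepsilon]$ and let $\varepsilon\downarrow0$. A different route --- bypassing the normalization --- would be to substitute $t=1-\delta^2/2$ directly into \eqref{eq:gammadelta}, use the Euler identity ${}_2F_1(\tfrac12,\tfrac a2;\tfrac a2;z)=(1-z)^{-1/2}$ together with a Gauss contiguous relation in $c$ to reduce the bracket in \eqref{eq:gammadelta} to $\tfrac{a}{(1+a)z}\bigl[\sqrt{1-z}-{}_2F_1(\tfrac12,\tfrac a2;\tfrac{2+a}{2};z)\bigr]$ with $z=\delta^2/4$, and then bound the power series $\sum_{n\ge1}\binom{2n}{n}4^{-n}\tfrac{n}{(2n+a)(2n-1)}z^n$ (which is this difference divided by $2(1+a)$) by $-\tfrac12\ln(1-z)$; but the argument above is shorter and avoids special functions.
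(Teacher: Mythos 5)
Your argument is correct, and it takes a genuinely different route from the paper's. Both proofs start the same way, using the manifest non-negativity of $\gamma_\delta'$ on the support to reduce the claim to the left endpoint $t=1-\delta^2/2$; after that they diverge. The paper stays inside the closed form \eqref{eq:gammadelta}: it bounds the hypergeometric factor by $(1-\delta^2/4)^{-1/2}$ through term-by-term comparison of the series (after rewriting the bracket at $t=1-\delta^2/2$ so that ${}_2F_1(\tfrac12,\tfrac{2+a}{2};\tfrac{4+a}{2};\tfrac{\delta^2}{4})$ appears), notes the resulting lower bound is monotone in $\delta$, and finishes with the single numerical evaluation $\gamma_{1/4}(\tfrac{31}{32})=16\bigl(\tfrac1\pi-\sqrt{\tfrac{16}{63\pi}}\bigr)>0$. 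You bypass the hypergeometric representation entirely: from the normalization $\Lambda_0^\delta=1$ (a definitional choice of the integration constant, so there is no circularity with the later lemma that invokes positivity), the fundamental-theorem representation of $\gamma_\delta$ on its support, and Tonelli, you obtain $\gamma_\delta(1-\delta^2/2)=\tfrac{2}{\delta^2}\bigl(\tfrac{1}{2\pi}-I\bigr)$ with $I=\int_{1-\delta^2/2}^1(1-s)\gamma_\delta'(s)\,\ud s$, and then an elementary bound on $I$; I checked the exponent bookkeeping ($2^{a/2}\delta^{-(1+a)}\cdot\tfrac{2}{a+2}(\delta^2/2)^{(a+2)/2}=\tfrac{\delta}{a+2}$), the estimate $1+s\ge\tfrac{63}{32}$, and the discriminant $-\tfrac{36}{25}$ of the final quadratic, and all are right, while your remark that the weight $1-s$ makes $(1-s)\gamma_\delta'(s)\sim(1-s)^{a/2}$ integrable (with the $\varepsilon$-truncation if one wants to be scrupulous about the interchange for $a\le0$) disposes of the only delicate point. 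What each approach buys: the paper's proof needs only one explicit evaluation but leans on hypergeometric manipulations; yours avoids special functions altogether, covers $a=0$ without any limiting interpretation of \eqref{eq:gammadelta}, and produces the explicit quantitative criterion $128\pi\delta^2(1+a)<63(a+2)^2$, which shows positivity actually persists for $\delta$ somewhat beyond $\tfrac14$ for most $a\in(-1,1)$.
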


\begin{proof}
Since $\gamma_\delta(t)$ is monotonically increasing, it is sufficient to check the condition at the left endpoint $t=1-\delta^2/2$. By term-by-term inequalities on the hypergeometric series,
\begin{align*}
\pFq{2}{1}\left(\begin{array}{c}\frac{1}{2}, \frac{2+a}{2}\\ \frac{4+a}{2}\end{array}; \frac{\delta^2}{4}\right) < \pFq{2}{1}\left(\begin{array}{c}\frac{1}{2}, \frac{4+a}{2}\\ \frac{4+a}{2}\end{array}; \frac{\delta^2}{4}\right) = \frac{1}{\sqrt{1-\delta^2/4}}.
\end{align*}
Then
\begin{align*}
\gamma_\delta(1-\delta^2/2) & > \frac{1}{2\pi}\frac{2}{\delta^2} - \frac{1}{2+a}\sqrt{\frac{2(1+a)}{\pi\delta^2}}\frac{1}{\sqrt{1-\delta^2/4}},\\
& > \frac{1}{2\pi}\frac{2}{\delta^2} - \sqrt{\frac{4}{\pi\delta^2(1-\delta^2/4)}}.
\end{align*}
The right-hand side is also monotonic as $\delta\to0^+$; suffice it to prove that $\gamma_{\frac{1}{4}}(\frac{31}{32}) > 0$. For any $0<\delta \le \frac{1}{4}$,
\begin{align*}
\gamma_\delta(1-\delta^2/2) \ge \gamma_{\frac{1}{4}}(\tfrac{31}{32}) = 16\left(\frac{1}{\pi}-\sqrt{\frac{16}{63\pi}}\right) > 0.
\end{align*}
\end{proof}

\begin{lemma}[Limiting values]
For every $\ell\ge1$, $\lim_{\delta\to0}\Lambda_\ell^\delta = 1$ and for every $\ell\ge0$, $\lim_{\delta\to0}\Theta_\ell^\delta = 1$. Specifically, for all $\epsilon>0$ and $\ell\ge1$,
\begin{align*}
\delta\leq\min(1/4,\sqrt{4\epsilon/[\ell(\ell+1)]}) \quad \Longrightarrow \quad \vert\Lambda_\ell^\delta - 1\vert \leq \epsilon.
\end{align*}
Similarly, for all $\epsilon>0$ and $\ell\ge0$,
\begin{align*}
\delta\leq\min(1/4,\epsilon/[\ell(\ell+1) + 1]) \quad \Longrightarrow \quad \vert\Theta_\ell^\delta - 1\vert \leq \epsilon.
\end{align*}
\end{lemma}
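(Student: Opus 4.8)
\emph{Proof sketch.} The plan is to treat $\gamma_\delta$ as an approximate identity concentrated at $t=1$. For $\delta\le\tfrac14$, Lemma~\ref{lemma:GammaIsPositive} gives $\gamma_\delta\ge0$ on its support $[1-\delta^2/2,1]$, and by construction $\Lambda_0^\delta=2\pi\int_{-1}^1\gamma_\delta(t)\ud t=1$, so $2\pi\gamma_\delta(t)\ud t$ is a probability measure supported in $[1-\delta^2/2,1]$. I would also record the classical estimate $\max_{t\in[-1,1]}|P_\ell'(t)|=P_\ell'(1)=\tfrac{\ell(\ell+1)}{2}$, which gives $|P_\ell(t)-1|=\bigl|\int_t^1P_\ell'(s)\ud s\bigr|\le\tfrac{\ell(\ell+1)}{2}(1-t)$ for all $t\in[-1,1]$.

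For $\Lambda_\ell^\delta$, since $P_0\equiv1$ and $P_\ell(1)=1$,
\begin{align*}
\Lambda_\ell^\delta-1=\Lambda_\ell^\delta-\Lambda_0^\delta=2\pi\int_{1-\delta^2/2}^1\bigl[P_\ell(t)-1\bigr]\gamma_\delta(t)\ud t,
\end{align*}
so using $\gamma_\delta\ge0$, $1-t\le\delta^2/2$ on the support, and $2\pi\int\gamma_\delta=1$ one gets $|\Lambda_\ell^\delta-1|\le\tfrac{\ell(\ell+1)}{4}\delta^2$; hence $\delta\le\min\bigl(\tfrac14,\sqrt{4\epsilon/[\ell(\ell+1)]}\bigr)$ forces $|\Lambda_\ell^\delta-1|\le\epsilon$, which is the first claim.

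For $\Theta_\ell^\delta$ I would first rewrite the integrand through the product rule $(1-t)\gamma_\delta'(t)=\bigl[(1-t)\gamma_\delta(t)\bigr]'+\gamma_\delta(t)$ and integrate by parts against $2\pi P_\ell(t)$. Since $\gamma_\delta$ vanishes near $t=-1$ (as $\delta\le\tfrac14$) and $(1-t)\gamma_\delta(t)\to0$ as $t\to1^-$ (a consequence of $\sqrt{1-t^2}\gamma_\delta'\in L^1$), the boundary contributions cancel and
\begin{align*}
\Theta_\ell^\delta=\lambda_\ell\{\gamma_\delta\}-2\pi\int_{1-\delta^2/2}^1(1-t)P_\ell'(t)\gamma_\delta(t)\ud t=\Lambda_\ell^\delta-2\pi\int_{1-\delta^2/2}^1(1-t)P_\ell'(t)\gamma_\delta(t)\ud t.
\end{align*}
For $\ell=0$ this reads $\Theta_0^\delta=\Lambda_0^\delta=1$. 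For $\ell\ge1$ the last integral is bounded by $\tfrac{\ell(\ell+1)}{2}\cdot\tfrac{\delta^2}{2}\cdot1=\tfrac{\ell(\ell+1)}{4}\delta^2$, so combining with the bound on $|\Lambda_\ell^\delta-1|$ gives $|\Theta_\ell^\delta-1|\le\tfrac{\ell(\ell+1)}{2}\delta^2$; then $\delta\le\min\bigl(\tfrac14,\epsilon/[\ell(\ell+1)+1]\bigr)$ forces $\delta^2\le\tfrac14\cdot\tfrac{\epsilon}{\ell(\ell+1)+1}\le\tfrac{2\epsilon}{\ell(\ell+1)}$ (using $\ell(\ell+1)\le8[\ell(\ell+1)+1]$), hence $|\Theta_\ell^\delta-1|\le\epsilon$, which is the second claim.

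The step requiring care — and the one I expect to be the main obstacle — is the integration by parts for $\Theta_\ell^\delta$: by Lemma~\ref{lemma:GammaIsPositive} the function $\gamma_\delta$ is not continuous but jumps upward at $t=1-\delta^2/2$ (it is positive just inside the horizon and zero just outside), so for the identity above to be correct one must read $\gamma_\delta'$ as the distributional derivative of $\gamma_\delta$ — the convention already implicit in the weighted operators of Definition~\ref{def:w_nonloc_operators} and in the localization lemmas — so that the surface term produced by the jump is retained and exactly cancels the boundary term at $t=1-\delta^2/2$; this is precisely what makes $\Theta_0^\delta$ equal to $1$ rather than to $1-\pi\delta^2\gamma_\delta(1-\delta^2/2)$. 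Equivalently, one keeps that jump term explicit and checks from the closed form~\eqref{eq:gammadelta} that $\gamma_\delta(1-\delta^2/2)=\tfrac1{\pi\delta^2}(1+O(\delta))$, recovering the same identity; either way the remaining estimates are the elementary ones above.
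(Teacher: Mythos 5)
Your treatment of $\Lambda_\ell^\delta$ is exactly the paper's (positivity and normalization of $\gamma_\delta$ for $\delta\le\tfrac14$ together with $|P_\ell(t)-1|\le\tfrac{\ell(\ell+1)}{2}(1-t)$), so the interesting comparison is the $\Theta_\ell^\delta$ part, where both you and the paper integrate by parts against $P_\ell$ but book-keep the boundary contribution differently. The paper retains the classical boundary term $2\pi[P_\ell(t)(1-t)\gamma_\delta(t)]_{1-\delta^2/2}^{1}$ and asserts that the upper limit contributes $\lim_{\epsilon\to0}\epsilon\gamma_\delta(1-\epsilon)=\tfrac{1}{2\pi}$; from the closed form \eqref{eq:gammadelta} this limit is actually $0$ for fixed $\delta$ (near $t=1$, $\gamma_\delta$ grows at most like $(1-t)^{a/2}$ with $a>-1$), and the $\delta$-free expression the paper uses there is really the horizon-endpoint value $\tfrac{\delta^2}{2}\gamma_\delta(1-\delta^2/2)$ rewritten via a contiguous relation of the hypergeometric functions — so in effect the paper reassigns the jump contribution at $t=1-\delta^2/2$ to the upper limit. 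Your route makes that contribution explicit: reading $\gamma_\delta'$ distributionally (equivalently, adding the jump term by hand), the horizon boundary term cancels and you obtain the clean identity $\Theta_\ell^\delta=\Lambda_\ell^\delta-2\pi\int_{1-\delta^2/2}^1(1-t)P_\ell'(t)\gamma_\delta(t)\ud t$, hence $\Theta_0^\delta=1$ exactly and $|\Theta_\ell^\delta-1|\le\tfrac{\ell(\ell+1)}{2}\delta^2$, sharper than the paper's $[\ell(\ell+1)+1]\delta$ and more than enough for the stated implication. The caveat you flag is not cosmetic but is what makes the statement true: with the literal kernel $\gamma'\rchi_{[0,\delta]}$ and no jump contribution one gets $\Theta_0^\delta=1-\pi\delta^2\gamma_\delta(1-\delta^2/2)=O(\delta)\to0$, exactly as you note, and the same convention is what legitimizes the integrations by parts behind the localization lemmas and the diagonalization theorem, so your version of the argument is the internally consistent one. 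Two small polish items: the vanishing of $(1-t)\gamma_\delta(t)$ as $t\to1^-$ has a one-line proof from monotonicity and integrability, $(1-t)\gamma_\delta(t)\le\int_t^1\gamma_\delta(s)\ud s\to0$, and the absence of a boundary term at $t=-1$ should be attributed simply to $\gamma_\delta$ vanishing outside $[1-\delta^2/2,1]$ when $\delta\le\tfrac14$.
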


\begin{proof}
Since
\begin{align}
\Lambda_\ell^\delta - 1 = 2\pi\int_{1-\delta^2/2}^1 \left[P_\ell(t)-1\right]\gamma_\delta(t)\ud t,
\end{align}
we use the Lagrange remainder theorem
\begin{align}
P_\ell(t) = P_\ell(1) + P_\ell'(\xi)(t-1), \quad \xi\in(1-\delta^2/2,1),
\end{align}
and the fact that $\max_{-1\le t\le 1}\vert P_\ell'(t)\vert\leq\ell(\ell+1)/2$. Thus
\begin{align}
\vert\Lambda_\ell^\delta - 1\vert \leq 2\pi\int_{1-\delta^2/2}^1\vert P_\ell'(t)\vert\vert t-1\vert\vert\gamma_\delta(t)\vert\ud t \leq \frac{\pi\ell(\ell+1)\delta^2}{2}\int_{1-\delta^2/2}^1 \vert\gamma_\delta(t)\vert\ud t.
\end{align}
For $\delta\leq1/4$, the integrand is nonnegative and integrates to $1/(2\pi)$. This yields
\begin{align}
\vert\Lambda_\ell^\delta - 1\vert \leq \frac{\ell(\ell+1)\delta^2}{4}.
\end{align}

For the second limit, we integrate by parts
\begin{align}
\Theta_\ell^\delta = 2\pi[P_\ell(t)(1-t)\gamma_\delta(t)]_{1-\delta^2/2}^1 - 2\pi\int_{1-\delta^2/2}^1P_\ell'(t)(1-t)\gamma_\delta(t)\ud t + \Lambda_\ell^\delta,
\end{align}
and note that the first two terms go to $0$ and the last one goes to $1$. Specifically, we have
\begin{align}
\vert\Theta_\ell^\delta - 1\vert \leq & \; 2\pi\abs{[P_\ell(t)(1-t)\gamma_\delta(t)]_{1-\delta^2/2}^1} + 2\pi\int_{1-\delta^2/2}^1\vert P_\ell'(t)\vert\vert 1-t\vert\vert\gamma_\delta(t)\vert\ud t\nonumber\\
& \; + \vert\Lambda_\ell^\delta - 1\vert.
\end{align}
This immediately gives, for $\delta\leq1/4$,
\begin{align}
\vert\Theta_\ell^\delta - 1\vert \leq 2\pi\abs{[P_\ell(t)(1-t)\gamma_\delta(t)]_{1-\delta^2/2}^1} + \frac{\ell(\ell+1)\delta^2}{2}.
\end{align}
Furthermore
\begin{align}
& \; 2\pi[P_\ell(t)(1-t)\gamma_\delta(t)]_{1-\delta^2/2}^1\nonumber\\
= & \; 2\pi\lim_{\epsilon\to0}\left\{P_\ell(1-\epsilon)\epsilon \gamma_\delta(1-\epsilon) - P_\ell(1-\delta^2/2)\delta^2/2\gamma_\delta(1-\delta^2/2)\right\},
\end{align}
with
\begin{align}
& \epsilon\gamma_\delta(1-\epsilon) = \frac{1}{2\pi} - \frac{\epsilon}{2+a}\sqrt{\frac{(1+a)}{\pi\epsilon}}\pFq{2}{1}\left(\begin{array}{c}\frac{1}{2}, \frac{2+a}{2}\\ \frac{4+a}{2}\end{array}; \frac{\epsilon}{2}\right), \\
& \delta^2/2\gamma_\delta(1-\delta^2/2) = \frac{1}{2\pi} - \frac{\delta^2}{2(2+a)}\sqrt{\frac{2(1+a)}{\pi\delta^2}}\pFq{2}{1}\left(\begin{array}{c}\frac{1}{2}, \frac{2+a}{2}\\ \frac{4+a}{2}\end{array}; \frac{\delta^2}{4}\right).
\end{align}
Therefore,
\begin{align}
& \; 2\pi[P_\ell(t)(1-t)\gamma_\delta(t)]_{1-\delta^2/2}^1\nonumber\\
= & \; 1 - P_\ell(1-\delta^2/2) + \frac{\pi\delta^2}{2+a}\sqrt{\frac{2(1+a)}{\pi\delta^2}}\pFq{2}{1}\left(\begin{array}{c}\frac{1}{2}, \frac{2+a}{2}\\ \frac{4+a}{2}\end{array}; \frac{\delta^2}{4}\right).
\end{align}
Finally, we note that 
\begin{align}
\vert 1 - P_\ell(1-\delta^2/2)\vert \leq \frac{\ell(\ell+1)\delta^2}{4},
\end{align}
as well as
\begin{align}
\frac{\pi\delta^2}{2+a}\sqrt{\frac{2(1+a)}{\pi\delta^2}}\abs{\pFq{2}{1}\left(\begin{array}{c}\frac{1}{2}, \frac{2+a}{2}\\ \frac{4+a}{2}\end{array}; \frac{\delta^2}{4}\right)} 
& \leq \frac{\delta}{\sqrt{\pi}}\abs{\pFq{2}{1}\left(\begin{array}{c}\frac{1}{2}, \frac{2+a}{2}\\ \frac{4+a}{2}\end{array}; \frac{\delta^2}{4}\right)},\nonumber\\
& \leq \frac{\delta}{\sqrt{\pi}}\frac{1}{\sqrt{1-\delta^2/4}} \leq \delta.
\end{align}
This yields
\begin{align}
\vert\Theta_\ell^\delta - 1\vert \leq \frac{3\ell(\ell+1)\delta^2}{4} + \delta \leq \left[\ell(\ell+1) + 1\right]\delta.
\end{align}
\end{proof}

We note that the previous lemma implies that for all $\epsilon>0$ and $\ell\ge0$,
\begin{align}
\delta\leq\min\left(1/4,\epsilon/[3\ell(\ell+1) + 3]\right) \quad \Longrightarrow \quad \vert\pm\Theta_0^\delta + \Theta_\ell^\delta - \Lambda_\ell^\delta \mp 1\vert \leq \epsilon.
\end{align}

Let $u$ be a scalar field on $\Stwo$ expanded in spherical harmonics as in \eqref{eq:scalar_spherical_expansion}, and let $\Vb$ be a vector field on $\Stwo$ expanded in vector spherical harmonics as in \eqref{eq:vector_spherical_expansion}. Owing to the orthogonality conditions
\begin{align}
& \int_{\Stwo}\conj{Y_{\ell',m'}(\xb)}Y_{\ell,m}(\xb)\ud\Omega(\xb) = \delta_{\ell,\ell'}\delta_{m,m'},\\
& \int_{\Stwo}\conj{\nabla_S Y_{\ell',m'}(\xb)}\cdot\nabla_S Y_{\ell,m}(\xb)\ud\Omega(\xb) = \ell(\ell+1)\delta_{\ell,\ell'}\delta_{m,m'},\\
& \int_{\Stwo}\conj{\xb\times\nabla_S Y_{\ell',m'}(\xb)}\cdot\xb\times\nabla_S Y_{\ell,m}(\xb)\ud\Omega(\xb) = \ell(\ell+1)\delta_{\ell,\ell'}\delta_{m,m'},\\
& \int_{\Stwo}\conj{Y_{\ell',m'}(\xb)\xb}\cdot Y_{\ell,m}(\xb)\xb\ud\Omega(\xb) = \delta_{\ell,\ell'}\delta_{m,m'},
\end{align}
the following definitions of Sobolev spaces are natural.\cite{atkinson2012}

\begin{definition}[Sobolev spaces]
Let $s\in\R$. The space $H^s(\Stwo,\R)$ is the completion of $C^\infty(\Stwo,\R)$ with respect to the norm
\begin{align}
\Vert u\Vert_{H^s(\Stwo,\R)}^2 = \sum_{\ell=0}^{+\infty}\sum_{m=-\ell}^{+\ell}\left(\ell+\tfrac{1}{2}\right)^{2s}\abs{u_{\ell,m}}^2.
\end{align}
The space $H^s(\Stwo,T\Stwo)$ is the completion of $C^\infty(\Stwo,T\Stwo)$ with respect to the norm
\begin{align}
\Vert\Vb\Vert_{H^s(\Stwo,T\Stwo)}^2 = \sum_{\ell=0}^{+\infty}\sum_{m=-\ell}^{+\ell}\left(\ell+\tfrac{1}{2}\right)^{2s+2}\left\{\abs{V_{\ell,m}^s}^2 + \abs{V_{\ell,m}^t}^2\right\}.
\end{align}
The space $H^s(\Stwo,\R^3)$ is the completion of $C^\infty(\Stwo,\R^3)$ with respect to the norm
\begin{align}
\Vert\Vb\Vert_{H^s(\Stwo,\R^3)}^2 = & \; \sum_{\ell=0}^{+\infty}\sum_{m=-\ell}^{+\ell}\Big\{\left(\ell+\tfrac{1}{2}\right)^{2s+2}\abs{V_{\ell,m}^s}^2 + \left(\ell+\tfrac{1}{2}\right)^{2s+2}\abs{V_{\ell,m}^t}^2\nonumber\\
& \; + \left(\ell+\tfrac{1}{2}\right)^{2s}\abs{V_{\ell,m}^x}^2\Big\}.
\end{align}
\end{definition}

For any $s\geq0$, consider
\begin{align*}
& \DD_S^0 : \Vb\in H^s(\Stwo,\R^3) \mapsto \DD_S^0\{\Vb\}\in H^{s-1}(\Stwo,\R),\\
& \CC_S^0 : \Vb\in H^s(\Stwo,\R^3) \mapsto \CC_S^0\{\Vb\}\in H^{s-1}(\Stwo,\R),\\
& \bs{\GG}_S^0 : u\in H^s(\Stwo,\R) \mapsto \bs{\GG}_S^0\{u\}\in H^{s-1}(\Stwo,T\Stwo),\\
& \bs{\CC}_S^0 : u\in H^s(\Stwo,\R) \mapsto \bs{\CC}_S^0\{u\}\in H^{s-1}(\Stwo,T\Stwo),\\
& \bs{\CC}^0: \Vb\in H^s(\Stwo,\R^3) \mapsto \bs{\CC}^0\{\Vb\}\in H^{s-1}(\Stwo,\R^3),\\
& (\bs{\CC}^0)^* : \Vb\in H^s(\Stwo,\R^3) \mapsto (\bs{\CC}^0)^*\{\Vb\}\in H^{s-1}(\Stwo,\R^3).
\end{align*}

\begin{lemma}[Boundedness of local operators] 
For every $s\geq0$, the local operators are bounded and
\begin{align}
& \Vert\DD_S^0\Vert_s = \sup_{\bs{0}\neq \Vb\in H^s(\Stwo,\R^3)}\frac{\Vert\DD_S^0\{\Vb\}\Vert_{H^{s-1}(\Stwo,\R)}}{\Vert\Vb\Vert_{H^s(\Stwo,\R^3)}} \leq 1, && \Vert\CC_S^0\Vert_s \leq 1, \label{eq:bounded-loc-div} \\
& \Vert\bs{\GG}_S^0\Vert_s = \sup_{0\neq u\in H^s(\Stwo,\R)}\frac{\Vert\bs{\GG}_S^0\{u\}\Vert_{H^{s-1}(\Stwo,T\Stwo)}}{\Vert u\Vert_{H^s(\Stwo,\R)}} = 1, && \Vert\bs{\CC}_S^0\Vert_s = 1, \label{eq:bounded-loc-grad} \\
& \Vert\bs{\CC}^0\Vert_s = \sup_{\bs{0}\neq \Vb\in H^s(\Stwo,\R^3)}\frac{\Vert\bs{\CC}^0\{\Vb\}\Vert_{H^{s-1}(\Stwo,\R^3)}}{\Vert\Vb\Vert_{H^s(\Stwo,\R^3)}} \leq \sqrt{2}, && \Vert(\bs{\CC}^0)^*\Vert_s \leq \sqrt{2}. \label{eq:bounded-loc-curl}
\end{align}
\end{lemma}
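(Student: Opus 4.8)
The plan is to diagonalize each operator via Theorem~\ref{thm:local_operators} and then read the bounds off the coefficient formulas for the Sobolev norms, so that every inequality reduces to an elementary term-by-term estimate. Write $u=\sum_{\ell,m}u_{\ell,m}Y_{\ell,m}$ and $\Vb=\sum_{\ell,m}\bigl(V^s_{\ell,m}\nabla_SY_{\ell,m}+V^t_{\ell,m}\xb\times\nabla_SY_{\ell,m}+V^x_{\ell,m}Y_{\ell,m}\xb\bigr)$. Since $\nabla_SY_{0,0}=0$, every image below has vanishing $\ell=0$ component, so the sums may be restricted to $\ell\ge1$, where $\ell+\tfrac12\ge\tfrac32$. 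The two elementary facts used repeatedly are $\ell(\ell+1)=(\ell+\tfrac12)^2-\tfrac14\le(\ell+\tfrac12)^2$ and $(\ell+\tfrac12)^{2s}\le(\ell+\tfrac12)^{2s+2}$ for $\ell\ge1$.

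\emph{Gradient-type operators.} By Theorem~\ref{thm:local_operators}, $\bs{\GG}_S^0\{u\}=\sum_{\ell\ge1,m}u_{\ell,m}\nabla_SY_{\ell,m}$, whose only nonzero vector-harmonic coefficient is the spheroidal one, $u_{\ell,m}$; hence $\Vert\bs{\GG}_S^0\{u\}\Vert_{H^{s-1}(\Stwo,T\Stwo)}^2=\sum_{\ell\ge1,m}(\ell+\tfrac12)^{2s}\abs{u_{\ell,m}}^2=\Vert u\Vert_{H^s(\Stwo,\R)}^2-(\tfrac12)^{2s}\abs{u_{0,0}}^2$. Therefore $\Vert\bs{\GG}_S^0\Vert_s\le1$, with equality attained on any $u$ having $u_{0,0}=0$, so $\Vert\bs{\GG}_S^0\Vert_s=1$; the identical argument in the toroidal slot gives $\Vert\bs{\CC}_S^0\Vert_s=1$.

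\emph{Divergence-type operators.} By Theorem~\ref{thm:local_operators}, $\DD_S^0\{\Vb\}=-\sum_{\ell,m}\ell(\ell+1)V^s_{\ell,m}Y_{\ell,m}$, so $\Vert\DD_S^0\{\Vb\}\Vert_{H^{s-1}(\Stwo,\R)}^2=\sum_{\ell,m}(\ell+\tfrac12)^{2s-2}\ell^2(\ell+1)^2\abs{V^s_{\ell,m}}^2$; using $\ell^2(\ell+1)^2\le(\ell+\tfrac12)^4$, each summand is dominated by $(\ell+\tfrac12)^{2s+2}\abs{V^s_{\ell,m}}^2$, which is one of the three nonnegative pieces of $\Vert\Vb\Vert_{H^s(\Stwo,\R^3)}^2$. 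Thus $\Vert\DD_S^0\Vert_s\le1$, and the same estimate with the toroidal coefficient gives $\Vert\CC_S^0\Vert_s\le1$.

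\emph{Curl and its adjoint.} This is the only case where components mix. By Theorem~\ref{thm:local_operators}, $\bs{\CC}^0\{\Vb\}$ has spheroidal coefficient $-V^t_{\ell,m}$, toroidal coefficient $V^s_{\ell,m}-V^x_{\ell,m}$, and normal coefficient $-\ell(\ell+1)V^t_{\ell,m}$, so
\begin{align*}
\Vert\bs{\CC}^0\{\Vb\}\Vert_{H^{s-1}(\Stwo,\R^3)}^2 = \sum_{\ell\ge1,m}\Bigl[&(\ell+\tfrac12)^{2s}\abs{V^t_{\ell,m}}^2+(\ell+\tfrac12)^{2s}\abs{V^s_{\ell,m}-V^x_{\ell,m}}^2\\
&+(\ell+\tfrac12)^{2s-2}\ell^2(\ell+1)^2\abs{V^t_{\ell,m}}^2\Bigr].
\end{align*}
I bound the first and third bracketed terms by $(\ell+\tfrac12)^{2s+2}\abs{V^t_{\ell,m}}^2$ (using $(\ell+\tfrac12)^2\ge1$ and $\ell(\ell+1)\le(\ell+\tfrac12)^2$, respectively), and the middle term by $2(\ell+\tfrac12)^{2s+2}\abs{V^s_{\ell,m}}^2+2(\ell+\tfrac12)^{2s}\abs{V^x_{\ell,m}}^2$ via $\abs{a-b}^2\le2\abs{a}^2+2\abs{b}^2$ together with $(\ell+\tfrac12)^2\ge1$ applied to the $V^s$ term only. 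Summing yields $\Vert\bs{\CC}^0\{\Vb\}\Vert_{H^{s-1}(\Stwo,\R^3)}^2\le2\Vert\Vb\Vert_{H^s(\Stwo,\R^3)}^2$, i.e.\ $\Vert\bs{\CC}^0\Vert_s\le\sqrt2$. The computation for $(\bs{\CC}^0)^*$ is identical up to signs, since only the magnitudes of the coefficients enter and $\abs{V^s_{\ell,m}+V^x_{\ell,m}}$ obeys the same bound as $\abs{V^s_{\ell,m}-V^x_{\ell,m}}$. The only genuine subtlety is the bookkeeping of which source coefficient feeds which component of the image norm, and checking that the $\ell=0$ contributions drop out; the constant $\sqrt2$ is not sharp and comes solely from the crude splitting of $\abs{V^s_{\ell,m}-V^x_{\ell,m}}^2$.
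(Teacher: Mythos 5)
Your proposal is correct and follows essentially the same route as the paper: diagonalize each operator on the (vector) spherical harmonic basis via Theorem~\ref{thm:local_operators}, compute the Sobolev norms coefficient-wise, and conclude with the elementary weight inequalities $\ell(\ell+1)\le(\ell+\tfrac12)^2$ and $\abs{a-b}^2\le2\abs{a}^2+2\abs{b}^2$. Your treatment of the $\ell=0$ mode and of the equality case for $\bs{\GG}_S^0$ and $\bs{\CC}_S^0$ is in fact slightly more careful than the paper's, but the argument and constants are the same.
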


\begin{proof}
We start with \eqref{eq:bounded-loc-div}. Let $s\geq0$ and consider
\begin{align*}
\Vb = \sum_{\ell=0}^{+\infty}\sum_{m=-\ell}^{+\ell}\Big\{V^s_{\ell,m}\nabla_S Y_{\ell,m} + V^t_{\ell,m}\xb\times\nabla_S Y_{\ell,m} + V^x_{\ell,m}Y_{\ell,m}\xb\Big\} \in H^s(\Stwo,\R^3).
\end{align*}
Then
\begin{align}
\DD_S^0\{\Vb\} = \sum_{\ell=0}^{+\infty}\sum_{m=-\ell}^{+\ell}\left[-\ell(\ell+1)\right]V^s_{\ell,m}Y_{\ell,m}.
\end{align}
This yields
\begin{align}
\Vert\DD_S^0\{\Vb\}\Vert_{H^{s-1}(\Stwo,\R)}^2 = \sum_{\ell=0}^{+\infty}\sum_{m=-\ell}^{+\ell}\left(\ell+\tfrac{1}{2}\right)^{2s-2}\abs{\ell(\ell+1)}^2\abs{V^s_{\ell,m}}^2.
\end{align}
Since $\left(\ell+\tfrac{1}{2}\right)^{2s-2}\abs{\ell(\ell+1)}^2\leq\left(\ell+\tfrac{1}{2}\right)^{2s+2}$ for all $\ell\geq0$,
\begin{align}
\Vert\DD_S^0\{\Vb\}\Vert_{H^{s-1}(\Stwo,\R)}^2 \leq \sum_{\ell=0}^{+\infty}\sum_{m=-\ell}^{+\ell}\left(\ell+\tfrac{1}{2}\right)^{2s+2}\abs{V^s_{\ell,m}}^2 \leq \Vert\Vb\Vert_{H^s(\Stwo,\R^3)}^2.
\end{align}
The proof is similar for $\CC_S^0$ since
\begin{align}
\CC_S^0\{\Vb\} = \sum_{\ell=0}^{+\infty}\sum_{m=-\ell}^{+\ell}\left[\ell(\ell+1)\right]V^t_{\ell,m}Y_{\ell,m}.
\end{align}

We continue with \eqref{eq:bounded-loc-grad}. Again, let $s\geq0$. We note that, for $u\in H^s(\Stwo,\R)$,
\begin{align}
u = \sum_{\ell=0}^{+\infty}\sum_{m=-\ell}^{+\ell} u_{\ell,m} Y_{\ell,m} \quad \Longrightarrow \quad \bs{\GG}_S^0\{u\} = \sum_{\ell=0}^{+\infty}\sum_{m=-\ell}^{+\ell} u_{\ell,m} \nabla_S Y_{\ell,m}.
\end{align}
This yields
\begin{align}
\Vert\bs{\GG}_S^0\{u\}\Vert_{H^{s-1}(\Stwo,T\Stwo)}^2 = \sum_{\ell=0}^{+\infty}\sum_{m=-\ell}^{+\ell}\left(\ell+\tfrac{1}{2}\right)^{2s}\abs{u_{\ell,m}}^2 = \Vert u\Vert_{H^s(\Stwo,\R)}^2.
\end{align}
The proof is similar for $\bs{\CC}_S^0$ since
\begin{align}
\bs{\CC}_S^0\{u\} = \sum_{\ell=0}^{+\infty}\sum_{m=-\ell}^{+\ell} u_{\ell,m} \xb\times\nabla_S Y_{\ell,m}.
\end{align}

We finish with \eqref{eq:bounded-loc-curl}. Let $s\geq0$ and
\begin{align*}
\Vb = \sum_{\ell=0}^{+\infty}\sum_{m=-\ell}^{+\ell}\Big\{V^s_{\ell,m}\nabla_S Y_{\ell,m} + V^t_{\ell,m}\xb\times\nabla_S Y_{\ell,m} + V^x_{\ell,m}Y_{\ell,m}\xb\Big\} \in H^s(\Stwo,\R^3).
\end{align*}
Then
\begin{align}
\bs{\CC}_S^0\{\Vb\} = & \; \sum_{\ell=0}^{+\infty}\sum_{m=-\ell}^{+\ell}\Big\{-V^t_{\ell,m}\nabla_S Y_{\ell,m} + (V^s_{\ell,m}-V^x_{\ell,m})\xb\times\nabla_S Y_{\ell,m}\nonumber\\
& \; - \ell(\ell+1)V^t_{\ell,m}Y_{\ell,m}\xb\Big\}.
\end{align}
This yields
\begin{align*}
& \;\Vert\bs{\CC}^0\{\Vb\}\Vert^2_{H^{s-1}(\Stwo,\R^3)} \\
= & \; \sum_{\ell=0}^{+\infty}\sum_{m=-\ell}^{+\ell}\Big\{\left(\ell+\tfrac{1}{2}\right)^{2s}\abs{V_{\ell,m}^t}^2 + \left(\ell+\tfrac{1}{2}\right)^{2s}\abs{V^s_{\ell,m}-V^x_{\ell,m}}^2\\
& \; + \left(\ell+\tfrac{1}{2}\right)^{2s-2}\abs{\ell(\ell+1)}^2\abs{V_{\ell,m}^t}^2\Big\},\\
\leq & \; 2\sum_{\ell=0}^{+\infty}\sum_{m=-\ell}^{+\ell}\left\{\left(\ell+\tfrac{1}{2}\right)^{2s+2}\abs{V^s_{\ell,m}}^2 + \left(\ell+\tfrac{1}{2}\right)^{2s+2}\abs{V_{\ell,m}^t}^2 + \left(\ell+\tfrac{1}{2}\right)^{2s}\abs{V^x_{\ell,m}}^2\right\},\\
= & \; 2\Vert\Vb\Vert^2_{H^s(\Stwo,\R^3)}.
\end{align*}
The proof is similar for $(\bs{\CC}_S^0)^*$ since
\begin{align}
(\bs{\CC}_S^0)^*\{\Vb\} = & \; \sum_{\ell=0}^{+\infty}\sum_{m=-\ell}^{+\ell}\Big\{V^t_{\ell,m}\nabla_S Y_{\ell,m} + (-V^s_{\ell,m}-V^x_{\ell,m})\xb\times\nabla_S Y_{\ell,m}\nonumber\\
& \; - \ell(\ell+1)V^t_{\ell,m}Y_{\ell,m}\xb\Big\}.
\end{align}
\end{proof}

For any $s\geq0$ and $0<\delta\leq2$, consider now the nonlocal operators
\begin{align*}
& \DD_S^\delta : \Vb\in H^s(\Stwo,\R^3)\mapsto \DD_S^\delta\{\Vb\}\in H^{s-1}(\Stwo,\R),\\
& \CC_S^\delta : \Vb\in H^s(\Stwo,\R^3)\mapsto \CC_S^\delta\{\Vb\}\in H^{s-1}(\Stwo,\R),\\
& \bs{\GG}_S^\delta : u\in H^s(\Stwo,\R)\mapsto \bs{\GG}_S^\delta\{u\}\in H^{s-1}(\Stwo,T\Stwo),\\
& \bs{\CC}_S^\delta : u\in H^s(\Stwo,\R)\mapsto \bs{\CC}_S^\delta\{u\}\in H^{s-1}(\Stwo,T\Stwo),\\
& \bs{\CC}^\delta: \Vb\in H^s(\Stwo,\R^3)\mapsto \bs{\CC}^\delta\{\Vb\}\in H^{s-1}(\Stwo,\R^3),\\
& (\bs{\CC}^\delta)^* :  \Vb\in H^s(\Stwo,\R^3)\mapsto (\bs{\CC}^\delta)^*\{\Vb\}\in H^{s-1}(\Stwo,\R^3).
\end{align*}

\begin{lemma}[Boundedness of nonlocal operators]\label{lem:bounded}
For every $s\geq0$ and horizon $0<\delta\leq2$, the nonlocal operators are bounded.
\end{lemma}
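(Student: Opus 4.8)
The plan is to read off the operators' action from the diagonalization in Theorem~\ref{thm:nonlocal_operators} and then estimate the resulting Fourier multipliers. By that theorem every weighted nonlocal operator is block-diagonal on the expansions \eqref{eq:scalar_spherical_expansion}, \eqref{eq:vector_spherical_expansion}, with entries that are fixed linear combinations of $\Lambda_\ell^\delta=\lambda_\ell\{\gamma_\delta\}$, $\Theta_\ell^\delta=\lambda_\ell\{(1-t)\gamma_\delta'\}$ and the constant $\Theta_0^\delta$, at most one of which is further multiplied by $\ell(\ell+1)$ and only in the ``normal'' output component. Since $H^{s-1}$ is exactly one order weaker than $H^{s}$, that lone factor of $\ell(\ell+1)$ is absorbed by the drop in the Sobolev weight (using $\ell(\ell+1)\le(\ell+\tfrac12)^2$); so the only thing that needs proving is that $\Lambda_\ell^\delta$ and $\Theta_\ell^\delta$ are bounded uniformly in $\ell$ for each fixed $0<\delta\le2$.

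First I would record
\[
|\Lambda_\ell^\delta|\le 2\pi\,\Vert\gamma_\delta\Vert_{L^1(-1,1)}=:M_1(\delta),\qquad |\Theta_\ell^\delta|\le 2\pi\,\Vert(1-t)\gamma_\delta'\Vert_{L^1(-1,1)}=:M_2(\delta),
\]
which follow immediately from $\vert P_\ell(t)\vert\le1$ on $[-1,1]$ once the two right-hand sides are finite. Finiteness holds for the kernel family \eqref{eq:gammadelta} and every $0<\delta\le2$: one has $\gamma_\delta\in L^1(-1,1)$ as assumed throughout, and $(1-t)\gamma_\delta'(t)$ equals, up to a constant, $(1-t)^{a/2}(1+t)^{-1/2}\rchi_{[0,\delta]}(\sqrt{2(1-t)})$, which is integrable on $(-1,1)$ because $\tfrac a2>-\tfrac12$ and $-\tfrac12>-1$. (For $0<\delta<2$ one can avoid the explicit kernel altogether: on $\mathrm{supp}\,\gamma_\delta\subseteq\{1-t\le\delta^2/2\}$ the ratio $(1-t)/\sqrt{1-t^2}=\sqrt{(1-t)/(1+t)}$ is bounded, so $(1-t)\gamma_\delta'\in L^1$ already follows from the standing hypothesis $\sqrt{1-t^2}\gamma_\delta'\in L^1$ of Theorem~\ref{thm:nonlocal_operators}.) Note also $|\Theta_0^\delta|\le M_2(\delta)$.

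Then I would treat the six operators in turn. For $\bs{\GG}_S^\delta$ and $\bs{\CC}_S^\delta$, Theorem~\ref{thm:nonlocal_operators} gives image coefficients $\Lambda_\ell^\delta u_{\ell,m}$, so $\Vert\bs{\GG}_S^\delta\{u\}\Vert_{H^{s-1}(\Stwo,T\Stwo)}^2=\sum(\ell+\tfrac12)^{2s}|\Lambda_\ell^\delta|^2|u_{\ell,m}|^2\le M_1(\delta)^2\Vert u\Vert_{H^s(\Stwo,\R)}^2$, and likewise for $\bs{\CC}_S^\delta$. For $\DD_S^\delta$ and $\CC_S^\delta$ the image coefficient is $\mp\Lambda_\ell^\delta\ell(\ell+1)V^{s}_{\ell,m}$ (resp.\ with $V^t$), and the estimate uses $\ell(\ell+1)\le(\ell+\tfrac12)^2$ to move from the weight $(\ell+\tfrac12)^{2s-2}$ of $H^{s-1}(\Stwo,\R)$ to the weight $(\ell+\tfrac12)^{2s+2}$ of $H^s(\Stwo,\R^3)$. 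For $\bs{\CC}^\delta$ (and, verbatim up to signs, $(\bs{\CC}^\delta)^*$) the image coefficients are the spheroidal $(\mp\Theta_0^\delta+\Theta_\ell^\delta-\Lambda_\ell^\delta)V^t_{\ell,m}$, the toroidal $(\pm\Theta_0^\delta+\Theta_\ell^\delta-\Lambda_\ell^\delta)V^s_{\ell,m}-\Lambda_\ell^\delta V^x_{\ell,m}$, and the normal $-\ell(\ell+1)\Lambda_\ell^\delta V^t_{\ell,m}$; using $|\pm\Theta_0^\delta+\Theta_\ell^\delta-\Lambda_\ell^\delta|\le M_1(\delta)+2M_2(\delta)$, the elementary inequality $|a+b|^2\le2|a|^2+2|b|^2$ on the toroidal coefficient, $\ell(\ell+1)\le(\ell+\tfrac12)^2$ on the normal one, and the fact that the spheroidal and toroidal output modes occur only for $\ell\ge1$ (where $(\ell+\tfrac12)^{2s}\le(\ell+\tfrac12)^{2s+2}$), one gets $\Vert\bs{\CC}^\delta\{\Vb\}\Vert_{H^{s-1}(\Stwo,\R^3)}^2\le C(\delta)^2\Vert\Vb\Vert_{H^s(\Stwo,\R^3)}^2$ with $C(\delta)$ depending only on $M_1(\delta),M_2(\delta)$ (and, incidentally, on neither $s$ nor the mode).

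The one genuinely delicate point is the integrability $(1-t)\gamma_\delta'\in L^1(-1,1)$ at the endpoint horizon $\delta=2$: there $\mathrm{supp}\,\gamma_\delta=[-1,1]$ and the crude estimate $\sqrt{(1-t)/(1+t)}\le\text{const}$ breaks down as $t\to-1$, so one must fall back on the explicit form \eqref{eq:gammadelta} of $\gamma_\delta$ (whose derivative has an integrable $(1+t)^{-1/2}$ singularity there) rather than on the weaker hypotheses of Theorem~\ref{thm:nonlocal_operators}. Everything else is routine: counting powers of $(\ell+\tfrac12)$ and invoking $|P_\ell|\le1$, $\ell(\ell+1)\le(\ell+\tfrac12)^2$, and $|a+b|^2\le2|a|^2+2|b|^2$.
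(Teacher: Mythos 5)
Your proof is correct and follows essentially the same route as the paper: read off the Fourier multipliers from the diagonalization in Theorem~\ref{thm:nonlocal_operators}, bound $\Lambda_\ell^\delta$ and $\pm\Theta_0^\delta+\Theta_\ell^\delta-\Lambda_\ell^\delta$ uniformly in $\ell$ via $\vert P_\ell\vert\le1$ and $L^1$ bounds on the kernel, and absorb the single factor $\ell(\ell+1)$ through the shift from the $H^s$ to the $H^{s-1}$ weight using $\ell(\ell+1)\le(\ell+\tfrac12)^2$. The only difference is that you spell out the integrability of $(1-t)\gamma_\delta'$ (including at $\delta=2$) and the $\ell\ge1$ restriction on the tangential modes, details the paper subsumes in the asserted constants $c$ and $b$.
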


\begin{proof}
We start with $\DD_S^\delta$. Let $s\geq0$ and $0<\delta\leq2$. Consider
\begin{align*}
\Vb = \sum_{\ell=0}^{+\infty}\sum_{m=-\ell}^{+\ell}\Big\{V^s_{\ell,m}\nabla_S Y_{\ell,m} + V^t_{\ell,m}\xb\times\nabla_S Y_{\ell,m} + V^x_{\ell,m}Y_{\ell,m}\xb\Big\} \in H^s(\Stwo,\R^3).
\end{align*}
Then
\begin{align}
\DD_S^\delta\{\Vb\} = \sum_{\ell=0}^{+\infty}\sum_{m=-\ell}^{+\ell}\left[-\Lambda_\ell^\delta\ell(\ell+1)\right]V^s_{\ell,m}Y_{\ell,m}.
\end{align}
This yields
\begin{align}
\Vert\DD_S^\delta\{\Vb\}\Vert_{H^{s-1}(\Stwo,\R)}^2 = \sum_{\ell=0}^{+\infty}\sum_{m=-\ell}^{+\ell}\left(\ell+\tfrac{1}{2}\right)^{2s-2}\abs{\Lambda_\ell^\delta}^2\abs{\ell(\ell+1)}^2\abs{V^s_{\ell,m}}^2.
\end{align}
Since there exists $c>0$ such that (recall that $P_\ell(t)\leq 1$ for all $\ell\geq0$ and $t\in[-1,1]$)
\begin{align}
\abs{\Lambda_\ell^\delta} \leq 2\pi\int_{-1}^1\abs{P_\ell(t)}\abs{\gamma_\delta(t)}\ud t\leq 2\pi\int_{-1}^1\abs{\gamma_\delta(t)}\ud t\leq c,
\end{align}
uniformly in $\delta$ and $\ell$, we have
\begin{align}
\Vert\DD_S^\delta\{\Vb\}\Vert_{H^{s-1}(\Stwo,\R)}^2 & \leq c^2\sum_{\ell=0}^{+\infty}\sum_{m=-\ell}^{+\ell}\left(\ell+\tfrac{1}{2}\right)^{2s-2}\abs{\ell(\ell+1)}^2\abs{V^s_{\ell,m}}^2,\nonumber\\
& = c^2\Vert\DD_S^0\{\Vb\}\Vert_{H^{s-1}(\Stwo,\R)}^2.
\end{align}
(Note that $c=1$ for $\delta\leq1/4$ since in that case $\gamma_\delta$ is nonnegative.) The proof is similar for $\CC_S^0$ since
\begin{align}
\CC_S^\delta\{\Vb\} = \sum_{\ell=0}^{+\infty}\sum_{m=-\ell}^{+\ell}\left[\Lambda_\ell^\delta\ell(\ell+1)\right]V^t_{\ell,m}Y_{\ell,m}.
\end{align}

We continue with $\bs{\GG}_S^\delta$. Let $s\geq0$ and $0<\delta\leq2$. We note that, for $u\in H^s(\Stwo,\R)$,
\begin{align}
u = \sum_{\ell=0}^{+\infty}\sum_{m=-\ell}^{+\ell} u_{\ell,m} Y_{\ell,m} \quad \Longrightarrow \quad \bs{\GG}_S^\delta\{u\} = \sum_{\ell=0}^{+\infty}\sum_{m=-\ell}^{+\ell} \Lambda_\ell^\delta u_{\ell,m} \nabla_S Y_{\ell,m}.
\end{align}
This yields, with the same constant $c>0$ as before,
\begin{align}
\Vert\bs{\GG}_S^\delta\{u\}\Vert_{H^{s-1}(\Stwo,T\Stwo)}^2 & = \sum_{\ell=0}^{+\infty}\sum_{m=-\ell}^{+\ell}\left(\ell+\tfrac{1}{2}\right)^{2s}\abs{\Lambda_\ell^\delta}^2\abs{u_{\ell,m}}^2,\nonumber\\
& \leq c^2\Vert\bs{\GG}_S^0\{u\}\Vert_{H^{s-1}(\Stwo,T\Stwo)}^2.
\end{align}
The proof is similar for $\bs{\CC}_S^\delta$ since
\begin{align}
\bs{\CC}_S^\delta\{u\} = \sum_{\ell=0}^{+\infty}\sum_{m=-\ell}^{+\ell} \Lambda_\ell^\delta u_{\ell,m} \xb\times\nabla_S Y_{\ell,m}.
\end{align}

We finish with $\bs{\CC}_S^\delta$. Let $s\geq0$, and $0<\delta\leq2$, and consider
\begin{align*}
\Vb = \sum_{\ell=0}^{+\infty}\sum_{m=-\ell}^{+\ell}\left\{V^s_{\ell,m}\nabla_S Y_{\ell,m} + V^t_{\ell,m}\xb\times\nabla_S Y_{\ell,m} + V^x_{\ell,m}Y_{\ell,m}\xb\right\} \in H^s(\Stwo,\R^3).
\end{align*}
Then
\begin{align*}
\bs{\CC}_S^\delta\{\Vb\} = & \sum_{\ell=0}^{+\infty}\sum_{m=-\ell}^{+\ell}\Big\{\left[-\Theta_0^\delta + \Theta_\ell^\delta - \Lambda_\ell^\delta\right]V^t_{\ell,m}\nabla_S Y_{\ell,m}\nonumber\\
& + \left(\left[\Theta_0^\delta + \Theta_\ell^\delta - \Lambda_\ell^\delta\right]V^s_{\ell,m} - \Lambda_\ell^\delta V^x_{\ell,m}\right)\xb\times\nabla_S Y_{\ell,m}- \Lambda_\ell^\delta\ell(\ell+1)V^t_{\ell,m}Y_{\ell,m}\xb\Big\}.
\end{align*}
Since there exists $b>0$ such that
\begin{align}
\abs{\pm\Theta_0^\delta + \Theta_\ell^\delta - \Lambda_\ell^\delta} \leq \abs{\Theta_0^\delta} + \abs{\Theta_\ell^\delta} + \abs{\Lambda_\ell^\delta} \leq b, \quad \abs{\Lambda_\ell^\delta} \leq c < b,
\end{align}
uniformly in $\delta$ and $\ell$, we obtain
\begin{align}
\Vert\bs{\CC}^\delta\{\Vb\}\Vert_{H^{s-1}(\Stwo,\R^3)}^2 \leq 2b^2\Vert\Vb\Vert^2_{H^s(\Stwo,\R^3)}.
\end{align}
The proof is similar for $(\bs{\CC}_S^\delta)^*$.
\end{proof}

Similarly, we can show that the averaging operator $\bs{\mathcal{A}}^\delta$ from Theorem \ref{thm:nonlocalstokes} is bounded from $H^s(\Stwo,\R^3)$ to $H^{s-1}(\Stwo,\R^3)$ for any $s\geq0$. Note that Lemma \ref{lem:bounded} extends the results of the previous sections---ranging from the definition of nonlocal operators (Definition \ref{def:nonloc_operators}) to the nonlocal Stokes theorem (Theorem \ref{thm:nonlocalstokes})---to more general function spaces.

We now turn to the strong convergence of the weighted nonlocal operators to their local counterparts.

\begin{theorem}[Strong convergence to local operators]
For every $s\geq0$, in the limit as $\delta\to0$, the weighted nonlocal operators converge strongly to the local analogues. That is,
\begin{align}
& \lim_{\delta\to0}\Vert\DD_S^\delta\{\Vb\} - \DD_S^0\{\Vb\}\Vert_{H^{s-1}(\Stwo,\R)} = 0 && \forall \Vb\in H^s(\Stwo,\R^3),\\
& \lim_{\delta\to0}\Vert\CC_S^\delta\{\Vb\} - \CC_S^0\{\Vb\}\Vert_{H^{s-1}(\Stwo,\R)} = 0 && \forall \Vb\in H^s(\Stwo,\R^3),\\
& \lim_{\delta\to0}\Vert\bs{\GG}_S^\delta\{u\} - \bs{\GG}_0^\delta\{u\}\Vert_{H^{s-1}(\Stwo,T\Stwo)} = 0 && \forall u\in H^s(\Stwo,\R),\\
& \lim_{\delta\to0}\Vert\bs{\CC}_S^\delta\{u\} - \bs{\CC}_S^0\{u\}\Vert_{H^{s-1}(\Stwo,T\Stwo)} = 0 && \forall u\in H^s(\Stwo,\R),\\
& \lim_{\delta\to0}\Vert\bs{\CC}^\delta\{\Vb\} - \bs{\CC}^0\{\Vb\}\Vert_{H^{s-1}(\Stwo,\R^3)} = 0 && \forall \Vb\in H^s(\Stwo,\R^3),\\
& \lim_{\delta\to0}\Vert(\bs{\CC}^\delta)^*\{\Vb\} - (\bs{\CC}^0)^*\{\Vb\}\Vert_{H^{s-1}(\Stwo,\R^3)} = 0 && \forall \Vb\in H^s(\Stwo,\R^3).
\end{align}
\end{theorem}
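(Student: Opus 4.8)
The plan is to reduce everything to the spectral pictures already in hand: Theorem~\ref{thm:local_operators} diagonalizes the local operators on (vector) spherical harmonics, Theorem~\ref{thm:nonlocal_operators} diagonalizes the weighted nonlocal ones, and the ``Limiting values'' lemma gives $\Lambda_\ell^\delta\to1$, $\Theta_\ell^\delta\to1$ (hence also $\pm\Theta_0^\delta+\Theta_\ell^\delta-\Lambda_\ell^\delta\to\pm1$) as $\delta\to0$, with explicit rates $|\Lambda_\ell^\delta-1|\le\ell(\ell+1)\delta^2/4$ and $|\pm\Theta_0^\delta+\Theta_\ell^\delta-\Lambda_\ell^\delta\mp1|\le[3\ell(\ell+1)+3]\delta$ once $\delta\le1/4$. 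Combining the boundedness of the local operators with Lemma~\ref{lem:bounded}, each difference $A^\delta-A^0$ (with $A$ one of the six operators) is bounded $H^s\to H^{s-1}$ with operator norm bounded uniformly in $\delta\in(0,2]$. The whole proof is then a truncation argument: convergence is strong but not in operator norm, since the rate estimates above degrade as $\ell\to\infty$ --- this is the only real obstacle, and it is dealt with by splitting each series into a fixed head and a tail controlled by the summability of the input. (Equivalently, uniform boundedness of $A^\delta-A^0$ plus strong convergence on the dense subspace of finite (vector) spherical harmonic combinations, where convergence is immediate from the multiplier limits, would give the result in one stroke.)

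I would present the surface divergence as the template. For $\Vb\in H^s(\Stwo,\R^3)$ with vector-spherical-harmonic coefficients $V^s_{\ell,m},V^t_{\ell,m},V^x_{\ell,m}$, Theorems~\ref{thm:local_operators} and~\ref{thm:nonlocal_operators} give
\[
\DD_S^\delta\{\Vb\}-\DD_S^0\{\Vb\}=-\sum_{\ell=0}^{+\infty}\sum_{m=-\ell}^{+\ell}(\Lambda_\ell^\delta-1)\,\ell(\ell+1)\,V^s_{\ell,m}\,Y_{\ell,m},
\]
so that, by the definition of the $H^{s-1}(\Stwo,\R)$ norm,
\[
\|\DD_S^\delta\{\Vb\}-\DD_S^0\{\Vb\}\|_{H^{s-1}(\Stwo,\R)}^2=\sum_{\ell=0}^{+\infty}\sum_{m=-\ell}^{+\ell}\big(\ell+\tfrac12\big)^{2s-2}|\Lambda_\ell^\delta-1|^2\,\ell^2(\ell+1)^2\,|V^s_{\ell,m}|^2 .
\]
Each summand is dominated, uniformly in $\delta$, by $(1+c)^2\big(\ell+\tfrac12\big)^{2s+2}|V^s_{\ell,m}|^2$ (using $|\Lambda_\ell^\delta|\le c$ from Lemma~\ref{lem:bounded} and $\big(\ell+\tfrac12\big)^{2s-2}\ell^2(\ell+1)^2\le\big(\ell+\tfrac12\big)^{2s+2}$), and this majorant is summable because $\Vb\in H^s$; since every summand also tends to $0$ as $\delta\to0$, dominated convergence for series gives the conclusion. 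Stated concretely: given $\eta>0$, choose $N$ with $(1+c)^2\sum_{\ell>N}\sum_m\big(\ell+\tfrac12\big)^{2s+2}|V^s_{\ell,m}|^2<\eta^2/2$; then the tail is $<\eta^2/2$ for every $\delta$, and the (finite) head is $<\eta^2/2$ once $\delta$ is small enough that $N(N+1)\delta^2/4$ is small, using $|\Lambda_\ell^\delta-1|\le N(N+1)\delta^2/4$ for $\ell\le N$, so the norm is $<\eta$.

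The cases $\CC_S^\delta$, $\bs{\GG}_S^\delta$, $\bs{\CC}_S^\delta$ are literally identical, only the local eigenfunction ($Y_{\ell,m}$, $\nabla_SY_{\ell,m}$, or $\xb\times\nabla_SY_{\ell,m}$) and the attached Sobolev weight change, the single multiplier being $\Lambda_\ell^\delta-1$ throughout. For $\bs{\CC}^\delta$ --- and $(\bs{\CC}^\delta)^*$, handled the same way up to the signs of $\Theta_0^\delta$ --- Theorems~\ref{thm:local_operators} and~\ref{thm:nonlocal_operators} show that $\bs{\CC}^\delta\{\Vb\}-\bs{\CC}^0\{\Vb\}$ has a $\nabla_SY_{\ell,m}$-component with coefficient $(-\Theta_0^\delta+\Theta_\ell^\delta-\Lambda_\ell^\delta+1)V^t_{\ell,m}$, a $\xb\times\nabla_SY_{\ell,m}$-component with coefficient $(\Theta_0^\delta+\Theta_\ell^\delta-\Lambda_\ell^\delta-1)V^s_{\ell,m}-(\Lambda_\ell^\delta-1)V^x_{\ell,m}$, and a $Y_{\ell,m}\xb$-component with coefficient $-\ell(\ell+1)(\Lambda_\ell^\delta-1)V^t_{\ell,m}$. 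The first two coefficient blocks go to $0$ for fixed $\ell$ by the remark $|\pm\Theta_0^\delta+\Theta_\ell^\delta-\Lambda_\ell^\delta\mp1|\le\epsilon$, the last by $|\Lambda_\ell^\delta-1|\le\ell(\ell+1)\delta^2/4$, and all three are dominated, uniformly in $\delta$, by a constant times $\big(\ell+\tfrac12\big)^{2s+2}(|V^s_{\ell,m}|^2+|V^t_{\ell,m}|^2)+\big(\ell+\tfrac12\big)^{2s}|V^x_{\ell,m}|^2$, which is summable since $\Vb\in H^s$. The same head/tail split then finishes the proof; the only bookkeeping care needed is keeping the three vector components and their distinct Sobolev weights straight, there being no analytic difficulty beyond the already-established diagonalization, boundedness, and limiting-value facts.
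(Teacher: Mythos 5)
Your proposal is correct and follows essentially the same route as the paper: both rest on the diagonalization of the local and weighted nonlocal operators (Theorems \ref{thm:local_operators} and \ref{thm:nonlocal_operators}), the explicit rates $|\Lambda_\ell^\delta-1|\le \ell(\ell+1)\delta^2/4$ and $|\pm\Theta_0^\delta+\Theta_\ell^\delta-\Lambda_\ell^\delta\mp1|\le[3\ell(\ell+1)+3]\delta$ from the limiting-values lemma, the uniform-in-$\delta$ boundedness from Lemma \ref{lem:bounded}, and a head/tail truncation justified by completeness of the (vector) spherical harmonics. Your version is, if anything, slightly cleaner in its quantifier ordering (fix $N$ for the tail first, then shrink $\delta$ for the head) and more explicit about the three coefficient blocks of $\bs{\CC}^\delta-\bs{\CC}^0$, which the paper leaves implicit.
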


\begin{proof}
The six cases are similar; we only prove the statements for $\DD_S^\delta$ and $\bs{\CC}^\delta$. Let $s\geq0$, $\Vb\in H^s(\Stwo,\R^3)$, $\varepsilon>0$, $n\geq1$, and
\begin{align}
\Vb_n = \sum_{\ell=0}^{+n}\sum_{m=-\ell}^{+\ell}\left(V^s_{\ell,m}\nabla_S Y_{\ell,m}(\xb) + V^t_{\ell,m}\xb\times\nabla_S Y_{\ell,m}(\xb) + V^x_{\ell,m}Y_{\ell,m}(\xb)\xb\right),
\end{align}
be the degree-$n$ truncation of $\Vb$. There exists $\delta\leq\min(1/4,\sqrt{4\epsilon/[n(n+1)]})$ such that
\begin{align}
& \; \norm{(\DD_S^\delta - \DD_S^0)\{\Vb\}}_{H^{s-1}(\Stwo,\R)}\nonumber\\
\leq & \; \norm{(\DD_S^\delta - \DD_S^0)\{\Vb_n\}}_{H^{s-1}(\Stwo,\R)} + (\norm{\DD_S^\delta}_s + \norm{\DD_S^0}_s)\norm{\Vb-\Vb_n}_{H^s(\Stwo,\R^3)},\nonumber\\
\leq & \; \epsilon\norm{\Vb}_{H^s(\Stwo,\R^3)} + 2\norm{\Vb-\Vb_n}_{H^s(\Stwo,\R^3)},
\end{align}
since
\begin{align}
\norm{(\DD_S^\delta - \DD_S^0)\{\Vb_n\}}_{H^{s-1}(\Stwo,\R)}^2 & = \sum_{\ell=0}^{+n}\sum_{m=-\ell}^{+\ell}\left(\ell+\tfrac{1}{2}\right)^{2s-2}\abs{\Lambda_\ell^\delta - 1}^2\abs{\ell(\ell+1)}^2\abs{V^s_{\ell,m}}^2,\nonumber\\
& \leq \epsilon^2\norm{\Vb}_{H^s(\Stwo,\R^3)}^2.
\end{align}
Thus, the norm is bounded by the first term that is arbitrarily small and the second term that converges to $0$ as $n\to\infty$ or equivalently as $\delta\to0$; for definiteness, choose
\begin{equation*}
\varepsilon = \frac{1}{n(n+1)},\quad \delta = \sqrt{\frac{4\epsilon}{n(n+1)}} = \frac{2}{n(n+1)},\quad\hbox{and take}\quad n\to\infty.
\end{equation*}
Convergence of the second term follows from the completeness of vector spherical harmonics in $H^s(\Stwo,\R^3)\hookrightarrow L^2(\Stwo,\R^3)$.

Similarly, there exists $\delta\leq\min\left(1/4,\epsilon/[3\ell(\ell+1) + 3]\right)$ such that
\begin{align}
& \; \Vert(\bs{\CC}^\delta - \bs{\CC}^0)\{\Vb\}\Vert_{H^{s-1}(\Stwo,\R^3)}\nonumber\\
\leq & \; \Vert(\bs{\CC}^\delta - \bs{\CC}^0)\{\Vb_n\}\Vert_{H^{s-1}(\Stwo,\R^3)} + (\Vert\bs{\CC}^\delta\Vert_s + \Vert\bs{\CC}^0\Vert_s)\norm{\Vb-\Vb_n}_{H^s(\Stwo,\R^3)},\nonumber\\
\leq & \; \sqrt{2}\epsilon\norm{\Vb}_{H^s(\Stwo,\R^3)} + \sqrt{2}(1+b)\norm{\Vb-\Vb_n}_{H^s(\Stwo,\R^3)},
\end{align}
since
\begin{align}
\Vert(\bs{\CC}^\delta-\bs{\CC}^0)\{\Vb\}\Vert_{H^{s-1}(\Stwo,\R^3)}^2 \leq 2\epsilon^2\norm{\Vb}_{H^s(\Stwo,\R^3)}.
\end{align}
Thus, the norm is bounded by the first term that is arbitrarily small and the second term that converges to $0$ as $n\to\infty$ or equivalently as $\delta\to0$; for definiteness, choose
\begin{equation*}
\varepsilon = \frac{1}{n(n+1) + 1},\quad \delta = \frac{4\epsilon}{3[n(n+1) + 1]} = \frac{4}{3[n(n+1) + 1]^2},\quad\hbox{and take}\quad n\to\infty.
\end{equation*}
Again, convergence of the second term follows from the completeness of vector spherical harmonics in $H^s(\Stwo,\R^3)\hookrightarrow L^2(\Stwo,\R^3)$.
\end{proof}

\section{Discussion}

We introduced a nonlocal vector calculus on the sphere based on weakly singular integral operators. With spherical harmonics as bases, our nonlocal operators are diagonal, enabling the proof of a nonlocal Stokes theorem. We established strong convergence to local operators as the interaction range approaches zero. Our approach aligns with our previous work on nonlocal diffusion \cite{montanelli2018b} and extends the nonlocal calculus for Euclidean domains \cite{du2013a} to the sphere. Future work includes extending this framework to higher-dimensional spheres, oblate/prolate spheroids, or arbitrary closed surfaces. Challenges for the latter include the fact that the normal $\bs{\nu}(\xb)$ is not equal to $\xb$, the distance is not solely a function of the dot product, and there is no natural basis in general.

Our nonlocal vector calculus enables the numerical solution of nonlocal advection and advection-reaction-diffusion equations on the sphere, relevant for atmospheric modeling. Since Charney et al.\cite{charney1950}, local advection equations have been widely studied, with benchmark tests from Williamson et al.\cite{williamson1992} and others \cite{nair2008, nair2010}. Advection-reaction-diffusion models describe phenomena like atmospheric chemical transport \cite{pudykiewciz2006}, and nonlocal extensions could provide new insights, following stability studies by Krause et al.~\cite{krause2018}. Our discretization should exhibit asymptotic compatibility \cite{du2013b, du2016, du2017b}, ensuring uniform convergence as $\delta$ goes to zero. Diagonal scalings can be computed in $\OO(\ell)$ flops using Clenshaw--Curtis quadrature, with $\OO(\ell^2)$ complexity if all integrals are needed. The dominant cost remains the spherical harmonic transform at $\OO(\ell^2\log^2\ell)$ \cite{slevinsky2018b, slevinsky2017b, slevinsky2019b}, though fast multipole methods could accelerate computations to $\OO(\ell\log(\epsilon^{-1}))$ \cite{alpert1991, keiner2011}. For time-stepping, exponential integrators like ETDRK4, shown effective by Montanelli and Bootland \cite{montanelli2020b}, could be utilized, with further comparisons available in \cite{montanelli2018a, montanelli2017phd}.

Finally, nonlocal modeling and integral operators play a crucial role in Scientific Machine Learning \cite{montanelli2025a}. Notably, Fourier neural operators \cite{li2021} and other neural operators rely on efficient computations of kernel integrals. For periodic problems, Fourier symbols can be computed efficiently \cite{du2017a} and could be incorporated into such machine learning models. In the future, we plan to investigate problems on the sphere involving nonlocal operators.

\bibliographystyle{siam}
\bibliography{/Users/montanelli/Seafile/WORK/ACADEMIA/BIBLIOGRAPHY/_references.bib}

\end{document}